\newcommand{\psum}{\sideset{}{^*}\sum}
\newcommand{\ZZ}{\mathbb{Z}}
\newcommand{\QQ}{\mathbb{Q}}
\newcommand{\CC}{\mathbb{C}}
\newcommand{\FF}{\mathbb{F}}
\newcommand{\A}{\mathbb{A}}
\newcommand{\PP}{\mathbb{P}}
\DeclareMathOperator{\vol}{vol}
\DeclareMathOperator{\GL}{GL}
\DeclareMathOperator{\cha}{char}
\DeclareMathOperator{\rk}{rank}
\DeclareMathOperator{\Pic}{Pic}
\def\O{\mathcal{O}}
\def\p{\mathfrak{p}}
\def\m{\mathfrak{m}}
\def\RR{\mathbb{R}}
\theoremstyle{plain}
\newtheorem{theorem}{Theorem}[section] 
\newtheorem{corollary}[theorem]{Corollary}
\newtheorem{lemma}[theorem]{Lemma}
\newtheorem{prop}[theorem]{Proposition}
\newtheorem{conjecture}[theorem]{Conjecture}
\numberwithin{equation}{section}
\theoremstyle{definition}
\newtheorem{definition}[theorem]{Definition}
\theoremstyle{remark}
\newtheorem*{remark}{Remark}
\title{Rational points on del Pezzo surfaces of low degree}
\address{(J.G) IST Austria, Am Campus 1, 3400 Klosterneuburg, Austria.}
\address{(L.H.) McAllister Building, Pennsylvania State University, 16802 State College, USA}
\email{jakob.glas@ist.ac.at, hochfilzer@psu.edu}
\subjclass[2010]{11D45, 11G05, 11G35, 14G05, 11H06}
\author{Jakob Glas, Leonhard Hochfilzer}
\begin{document}
\maketitle
\begin{abstract}
    We give upper bounds for the number of rational points of bounded anti-canonical height on del Pezzo surfaces of degree at most five over any global field whose characteristic is not equal to two or three. For number fields these results are conditional on a conjecture relating the rank of an elliptic curve to its conductor, while they are unconditional in positive characteristic. For quartic or quintic del Pezzo surfaces with a conic bundle structure, we establish even stronger estimates unconditionally as long as the characteristic is not two.
\end{abstract}
\tableofcontents
\section{Introduction}
Understanding the set of rational points of a variety $X$ over a global field $K$ constitutes one of the cornerstones of modern number theory. When $X$ is a smooth Fano variety, Manin and his collaborators \cite{batyrev1990manin} put forward a conjecture for the counting function 
\[
N_U(B)=\#\{x\in U(K)\colon H(x)<B\},
\]
where $H\colon X(K)\to \RR_{>0}$ is a height function associated to the anti-canonical divisor $-K_X$ of $X$ and $U\subset X$. More precisely, following a refined version of the conjecture due to Peyre~\cite{peyreThin}, one expects the  existence of a thin subset $Z\subset X(K)$ such that for $U=X\setminus Z$, one has 
\[
N_U(B)\sim cB(\log B)^{\rk \Pic(X)-1},
\]
where $c\geq 0$ is Peyre's constant. While Manin's conjecture for curves is well understood, the case of surfaces is already much more mysterious. A surface $X$ that is Fano is called a \emph{del Pezzo surface} and is classified by the degree $d=K_X^2$, which satisfies $1\leq d\leq 9$. Moreover, it is generally believed that for $d\geq 2$ one can take $U$ to be the complement of the exceptional curves in Manin's conjecture. 

If the degree satisfies $6 \leq d \leq 9$ then any del Pezzo surface is a toric variety.
Thanks to work of Batyrev and Tschinkel \cite{BatTschinkToric} for number fields and Bourqui \cite{BourquiI, BourquiII} in positive characteristic, Manin's conjecture is therefore known for all del Pezzo surfaces of degree $6\leq d\leq 9$. If $d \leq 5$, much less is known. One of the notable exceptions is de la Brétèche's work \cite{Bretechedp5}, in which he proved Manin's conjecture for split del Pezzo surfaces of degree 5 over $\QQ$. Recently, with a different approach Browning~\cite{Browningdp5Improved} used different methods to obtain the same result with a better error term. In addition, de la Brétèche and Fouvry \cite{BretFouvry} verified Manin's conjecture for del Pezzo surfaces of degree 5 over $\QQ$ that are the blow-up of a pair of points that are defined over $\QQ$ and a pair of conjugate points over $\QQ(i)$. When $d=4$, the tour de force \cite{browningBretechedp4} of de la Brétèche and Browning provides us with the only example of a del Pezzo surface of degree 4 over $\QQ$ for which we know Manin's conjecture. These results already reflect the guiding principle that the arithmetic of del Pezzo surfaces becomes harder to understand the smaller the degree is. In particular, for $1\leq d \leq 3$ we do not know the truth of Manin's conjecture for any single example of a del Pezzo surface and in fact, if the ground field is not $\QQ$, we do not even know it for any del Pezzo surface of degree $1\leq d \leq 5$. 

While an asymptotic formula remains largely elusive for small degrees, even providing upper bounds remains a substantial challenge in itself.  For $2\leq d\leq 5$, currently the best upper bounds are found in work of Salberger \cite{TheSalberger}, in which he showed that ${N_U(B)\ll_X B^{3/\sqrt{d}+\varepsilon}}$ when the ground field is $\QQ$. Again working over $\QQ$, it follows from combining work of Bhargava et al.~\cite{bhargavaEll} on pointwise bound for the ranks of elliptic curves with work of Helfgott and Venkatesh~\cite{HelfVenkatesh} on integral points of elliptic curves that $N_U(B)\ll_X B^{2.87}$ when $X$ is a del Pezzo surface of degree 1. Moreover, when $d=3$ Heath Brown \cite{heathbrownDiagCubic} succeeded in showing that $N_U(B)\ll_X B^{3/2+\varepsilon}$ and the underlying cubic forms is diagonal conditional on certain conjectures for Hasse-Weil $L$-functions associated to a family of cubic threefolds. The authors~\cite{glas2022question} showed that the same upper bounds holds unconditionally over $\FF_q(t)$ when $\cha(\FF_q)>3$. 

Let us now consider the following conjecture and its consequences for Manin's conjecture for del Pezzo surfaces.
\begin{conjecture}\label{Conj: RGH}
    Let $E$ be an elliptic curve over a global field $K$ with $\cha(K)\neq 2,3$ and $\mathscr{C}_E$ its conductor. Then
    \[
    \rk E = o(\log \mathrm{N}(\mathscr{C}_E))\quad\text{as }\mathrm{N}(\mathscr{C}_E)\to\infty, 
    \]
where $\mathrm{N}(\mathscr{C}_E)$ denotes the norm of $\mathscr{C}_E$.
\end{conjecture}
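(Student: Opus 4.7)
This is a well-known open conjecture, and a full unconditional proof over number fields lies well beyond current techniques. My plan would be to handle the two characteristic cases separately, as they admit quite different treatments.

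\textbf{Positive characteristic.} Over $K=\FF_q(C)$ for a smooth projective curve $C$ with $\cha(\FF_q)\neq 2,3$, the conjecture is essentially a theorem. For a non-isotrivial $E/K$, I would extend to the minimal elliptic surface $\pi\colon\mathscr{E}\to C$ and apply the Shioda-Tate formula to bound $\rk E(K)$ by $\rk\mathrm{NS}(\mathscr{E})-2$ up to local corrections from reducible fibers. The Tate conjecture for elliptic surfaces over finite fields (known unconditionally, going back to Artin--Swinnerton-Dyer and completed by Milne and others) identifies this with the order of vanishing at $s=1$ of $L(\mathscr{E},s)$, which is a polynomial in $q^{-s}$ whose degree grows linearly in $\deg\mathscr{C}_E$. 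Deligne's Riemann hypothesis places all zeros on the critical circle $|q^{-s}|=q^{-1}$, after which a Jensen-type argument bounds the multiplicity of any single zero by $O(\deg/\log\deg)$, yielding an unconditional bound strictly stronger than $o(\log\mathrm{N}(\mathscr{C}_E))$.

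\textbf{Number fields.} Assume the Generalized Riemann Hypothesis for $L(E,s)$, which is known to admit meromorphic continuation and a functional equation thanks to the modularity of $E$ in great generality. The Mestre--Brumer explicit formula argument, applied to a well-chosen compactly supported test function and using GRH to discard the contribution from the nontrivial zeros, yields
\[
\ord_{s=1}L(E,s)\ll\frac{\log\mathrm{N}(\mathscr{C}_E)}{\log\log\mathrm{N}(\mathscr{C}_E)}.
\]
To transfer this bound on the analytic rank to one on $\rk E$, one can either combine with BSD's rank inequality (itself conjectural) or else use Brumer's refinement which directly controls the algebraic rank through a descent argument coupled to GRH. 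Either route delivers a bound much stronger than the conjecture requires.

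\textbf{Main obstacle.} The fundamental barrier is removing GRH over number fields. Unconditional bounds obtained via $p$-descent or Selmer group estimates are at best $\rk E\ll\log\mathrm{N}(\mathscr{C}_E)$, missing the crucial $\log\log$-saving; an unconditional zero-density estimate for $L(E,s)$ near $s=1$ at the strength currently available for Dirichlet $L$-functions would suffice, but seems out of reach with present technology. I would expect the conjecture to remain conditional over number fields for the foreseeable future, which is precisely why the authors must invoke it as a hypothesis in the number field case.
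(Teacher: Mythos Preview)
The statement is a \emph{conjecture}, not a theorem; the paper provides no proof of it and does not claim to. Immediately after stating it, the authors simply record what is known: Mestre showed it follows from BSD, $2$-descent gives the weaker $O(\log\mathrm{N}(\mathscr{C}_E))$, and Brumer proved it unconditionally when $\cha(K)>3$ (Proposition restated in Section~3 with the sharper $\log/\log\log$ bound). Your proposal correctly recognizes all of this: you identify the positive-characteristic case as a theorem via the Brumer/explicit-formula route, flag the number-field case as conditional on GRH or BSD, and pinpoint the obstruction to an unconditional proof. That is exactly the state of affairs the paper describes, and indeed exactly why the authors assume the conjecture as a hypothesis in characteristic zero while working unconditionally in positive characteristic.

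One minor quibble: in your number-field sketch you mention ``modularity of $E$ in great generality'' to secure analytic continuation of $L(E,s)$. Over $\QQ$ this is fine, but over general number fields modularity is not known in full, so the Mestre--Brumer argument already requires an additional hypothesis beyond GRH (or one assumes the full analytic package as part of the input). This does not affect your overall assessment, since the point is precisely that the conjecture is open.
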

 Mestre \cite{MestreBoundConductor} showed that Conjecture~\ref{Conj: RGH} is implied by the Birch and Swinnerton-Dyer conjecture. Moreover, 2-descent shows that we always have $\rk E=O(\log \mathrm{N}(\mathscr{C}_E))$. In fact, when $\cha(K)>3$, the conjecture was proven by Brumer~\cite{Brumerelliptic}. The relevance of this conjecture is that the pullback of a generic hyperplane under the birational map $X\to \PP^d$ induced by the anti-canonical divisor is generically a smooth genus 1 curve. Assuming Conjecture \ref{Conj: RGH}, Heath-Brown \cite{heath1998counting} obtained \emph{uniform} upper bounds for the number of rational points of bounded height on planar elliptic curves and showed that $N_U(B)\ll_X B^{4/3+\varepsilon}$ for any cubic surface over $K=\QQ$. Our first main result extends this to any del Pezzo surface of degree at most 5 and to any global field whose characteristic exceeds 3 if it is positive.
\begin{theorem}\label{Th: TheTheorem}
    Let $X$ be a del Pezzo surface of degree $1\leq d\leq 5$ over a global field $K$ with $\cha(K)\neq 2,3$. Then 
    \[
    N_U(B)\ll_X B^{1+1/d +\varepsilon},
    \]
    unconditionally when $\cha(K)>3$ and assuming Conjecture \ref{Conj: RGH} when $\cha(K)=0$. Moreover, when $d=1$ the implied constant is independent of $X$.
\end{theorem}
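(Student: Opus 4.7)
The plan is to extend Heath-Brown's hyperplane-slicing strategy for cubic surfaces to every del Pezzo surface of degree $d \leq 5$. For $3 \leq d \leq 5$ I would embed $X$ anticanonically into $\mathbb{P}^d$, so that, by adjunction, a generic hyperplane section is a smooth genus $1$ curve of degree $d$. For $d = 2$ the linear system $\lvert -K_X\rvert$ realises $X$ as a double cover of $\mathbb{P}^2$ branched along a smooth quartic, and I would foliate $X$ by the pullbacks of lines in $\mathbb{P}^2$, each of which is a genus $1$ curve by Riemann--Hurwitz. When $d = 1$ the pencil $\lvert -K_X\rvert$ already defines an elliptic fibration $X \dashrightarrow \mathbb{P}^1$, so the genus $1$ slicing comes essentially for free and in a form rigid enough to allow the implied constant in the conclusion to be taken independent of $X$.

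The central step is a geometry-of-numbers estimate: for each rational point $x \in U(K)$ with $H(x) \leq B$, I would exhibit a slice (hyperplane, line, or fibre) of small height through $x$. In the model $X \subset \mathbb{P}^d$ the $\mathcal{O}_K$-lattice of integral dual vectors annihilating $x$ has rank $d$ and covolume $\asymp H(x)$, so Minkowski's second theorem produces a hyperplane $H_x \ni x$ of height $\ll H(x)^{1/d} \ll B^{1/d}$. Since the number of $K$-rational hyperplanes of height $\ll B^{1/d}$ in $(\mathbb{P}^d)^{\vee}$ is $\ll B^{1+1/d}$, the contributing points are distributed among $O(B^{1+1/d})$ smooth genus $1$ sections; the singular sections form a codimension-one subfamily which I would handle via a Bezout-type estimate, together with the observation that their rational components lie generically in $X \setminus U$. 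The $d=1,2$ cases proceed by an analogous Minkowski estimate in the lower-dimensional image of $\lvert -K_X \rvert$.

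For each smooth section $C$, I would then invoke a uniform upper bound for rational points of bounded height on a smooth genus $1$ curve of the form
\[
\#\{x \in C(K) \colon H(x) \leq B\} \ll_{\varepsilon} B^\varepsilon\, c^{\rk J(C)}
\]
for some absolute constant $c>1$, where $J(C)$ denotes the Jacobian of $C$. Because the defining equations of $C$ have coefficients of height $B^{O(1)}$, the conductor norm $\mathrm{N}(\mathscr{C}_{J(C)})$ is polynomial in $B$, so Conjecture~\ref{Conj: RGH} (respectively Brumer's theorem when $\cha(K)>3$) gives $\rk J(C) = o(\log B)$ and hence $c^{\rk J(C)} = B^{o(1)} \ll B^\varepsilon$. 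Multiplying this per-section bound by the $O(B^{1+1/d})$ available sections yields the claimed $N_U(B) \ll B^{1+1/d+\varepsilon}$.

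The hardest step I anticipate is executing the whole chain uniformly over an arbitrary global field; in particular, carrying out Minkowski's theorem over $\mathbb{F}_q[t]$ with appropriate function-field height conventions typically calls for an adelic or $S$-integral reformulation, and one must also ensure that the uniform genus $1$ bound is known in that generality. A second delicate point is the uniformity in $X$ asserted when $d=1$: this demands that both the genus $1$ bound and the implicit constants in the rank--conductor inequality be made independent of the particular surface, which I would hope to achieve by working throughout with an explicit Weierstrass model of the elliptic fibration $X \dashrightarrow \mathbb{P}^1$.
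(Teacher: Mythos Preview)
Your overall architecture---slice by the anticanonical linear system, cover each point by a slice of height $\ll B^{1/d}$ via Siegel's lemma, and bound smooth genus~$1$ sections by $B^\varepsilon$ using the rank--conductor hypothesis---is exactly the paper's strategy, and your treatment of the smooth sections is essentially correct (though the paper works harder than you suggest to make the genus~$1$ bound genuinely uniform in the curve, cf.\ Lemma~\ref{Le:SizeCoeffcs}).

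The real gap is in your handling of the singular sections. Your claim that ``their rational components lie generically in $X\setminus U$'' is false in the case that matters most. When $X_{\bm c}=L\cup D$ with $L$ one of the finitely many lines on $X$ and $D$ an irreducible curve of degree $d-1$, the curve $D$ is rational (the section has arithmetic genus~$1$) but is \emph{not} an exceptional curve, so its points lie in $U$ and must be counted. Hyperplanes containing a fixed $L$ form a $\PP^{d-2}$ in $\widehat{\PP}^d$, so there are $\asymp B^{(d-1)/d}$ of them with height $\ll B^{1/d}$, while the best uniform bound for a degree-$(d-1)$ curve is $N_D(B)\ll B^{2/(d-1)}$. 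For every $3\le d\le 5$ one has
\[
\frac{d-1}{d}+\frac{2}{d-1}\;>\;1+\frac{1}{d},
\]
so a ``B\'ezout-type estimate'' alone cannot close the argument; you need an extra saving in the height of $\bm c$.

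This is precisely the technical core of the paper. For $d=3$ the residual conic fibration through $L$ is exploited together with the discriminant-dependent conic bound (Corollary~\ref{Cor: ConicsDisc}) and Lemma~\ref{Le: Polysmall} to recover the missing power. For $d=4,5$ one must go further: the paper parameterises $D$ explicitly as a rational curve and proves refined estimates of the shape $N_{\bm c}(B)\ll B^{2/3+\varepsilon}\norm{\bm c}_\infty^{-2/3}$ (for $d=4$, Proposition~\ref{Prop: Degenerate.hyperplane.dp4}) and an analogous two-term bound for $d=5$ (Proposition~\ref{Prop: degenerate.hyperplane.dp5}), each requiring nontrivial geometric input about the degeneration locus (Lemma~\ref{Le: Dim.BadHyper.dp5}). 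Without this ingredient your proof does not go through for $d\ge 3$.
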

Using an approach based on exponential sums, Bonolis and Browning~\cite{bonolis2020uniform} obtained the estimate $N_U(B)\ll B^{3-1/20}(\log B)^2$ for $d=1$ over $\QQ$. While this estimate is weaker than the one obtained by Bhargava et al.~\cite{bhargavaEll}, it has the advantage that it is uniform with respect to the underlying surface. The upper bound from Theorem~\ref{Th: TheTheorem} for $d=1$ shares the same uniformity.

Recall that if there is a dominant $K$-morphism $X\to \PP^1$ such that all fibers are plane conics, we say that $X$ admits a \emph{conic bundle structure}. When a del Pezzo surface comes with such extra structure, one can use it to get better control over the number of rational points. In particular, Heath-Brown \cite{HeathBrownCubicConicBundle} has shown that $N_U(B)\ll_X B^{4/3+\varepsilon}$ for $d=3$ over $\QQ$ when $X$ has three coplanar lines defined over $\QQ$, which give rise to three inequivalent conic bundle structures. This result was later generalised to number fields by Broberg~\cite{broberg2001rational}. Moreover, Browning and Sofos \cite{BrowningSofosDP4Conic} proved that 
\[{B(\log B)^{\rk \Pic(X)-1}\ll_X N_U(B)\ll_X B(\log B)^{\rk \Pic(X)-1}}
\]
when $d=4$ for $K=\QQ$ assuming that $X(K)\neq \emptyset$. Building on ideas of Salberger announced at the conference ``Higher
dimensional varieties and rational points'' at Budapest in 2001,  work of Browning an Swarbrick-Jones \cite{browning_S-J} gives $N_U(B)\ll_X B^{1+\varepsilon}$ when $d=4$ and $K$ is a number field. When $d=2$ and $K=\QQ$, Salberger announced at the conference ``G\'eom\`etrie arithm\'etique et vari\'et\'es rationnelles'' at Luminy in 2007 the result that $N_U(B)\ll_X B^{11/6+\varepsilon}$ provided $X$ is split. We are now ready to reveal our second main result.
\begin{theorem}\label{Th: TheConicBundleTheorem}
    Let $X$ be a del Pezzo surface of degree $4$ or $5$ over a global field $K$ of characteristic $\cha(K)\neq 2$ with a conic bundle structure. Then $N_U(B)\ll_X B^{1+\varepsilon}$ for an effectively computable implied constant.
\end{theorem}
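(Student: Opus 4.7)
The plan is to apply Salberger's conic bundle strategy, in the refined form developed by Browning and Swarbrick-Jones \cite{browning_S-J} for quartic del Pezzo surfaces. Let $\pi\colon X\to \PP^1$ denote the conic bundle. For each $P\in U(K)$ with $H(P)\leq B$, set $t:=\pi(P)\in\PP^1(K)$. Comparing the anti-canonical height on $X$ with the pullback of the standard height on $\PP^1$ along $\pi$, dictated by the numerical class of a general fibre, gives $H(t)\ll_X B^\alpha$ for an explicit $\alpha>0$ depending on $d\in\{4,5\}$. Writing $C_t:=\pi^{-1}(t)$ and $N(C_t,B)$ for the number of $K$-rational points on $C_t$ of height $\leq B$, we obtain
\[
N_U(B)\;\ll\; \sum_{\substack{t\in\PP^1(K)\\ H(t)\leq cB^\alpha}}N(C_t,B).
\]

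Next I would bound $N(C_t,B)$ uniformly in $t$. For $t$ outside the finite set of discriminant points, $C_t$ is a smooth plane conic; for $t$ at a discriminant point the fibre is a pair of lines, hence geometrically contained in the exceptional locus and contributing nothing to $N_U(B)$. If $C_t(K)=\emptyset$ there is nothing to count. Otherwise, let $P_{0,t}\in C_t(K)$ be a rational point of smallest height $h_t$ and parametrize $C_t$ by the pencil of lines through $P_{0,t}$. A direct analysis of heights under this parametrization yields a Salberger-type estimate of the shape $N(C_t,B)\ll_\varepsilon B^{1+\varepsilon}/(H(t)^\beta h_t^\gamma)+1$ for explicit $\beta,\gamma>0$, valid in any characteristic $\neq 2$.

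The heart of the matter is to show that on average over $t$ the quantity $h_t$ is not too small. By Hasse--Minkowski, the solubility of $C_t$ over $K$ is detected by a finite product of Hilbert symbols $(a_t,b_t)_v$, where $a_t,b_t$ depend polynomially on $t$; the resulting quadratic characters can be expanded using standard large sieve / character sum input. In this way one obtains a family estimate
\[
\sum_{\substack{H(t)\leq T\\ C_t(K)\neq\emptyset}} \frac{1}{h_t^\gamma}\;\ll_\varepsilon\; T^{1+\varepsilon},
\]
which when combined with the previous display and balanced against $\alpha,\beta,\gamma$ yields $N_U(B)\ll_X B^{1+\varepsilon}$.

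The main obstacle is to carry out the family estimate uniformly over arbitrary global fields $K$ with $\cha(K)\neq 2$ while keeping constants effective. Over $\QQ$ this is classical; over a general number field one must handle the unit group, class group and archimedean Hilbert symbols carefully in the sieve step; in the function field case, one must develop a family version of Hilbert reciprocity for conics in odd characteristic and control the variation of the discriminant along the bundle independently of $q$. The hypothesis $\cha(K)\neq 2$ is essential throughout, since the classical theory of quadratic forms and conics breaks down in characteristic two. Effectivity of the constants is preserved because every step -- the height comparison, the parametrization of conics, and the character sum -- is explicit.
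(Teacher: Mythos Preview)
Your sketch diverges sharply from the paper's argument and has a genuine gap at its core. The step you call ``the heart of the matter'' --- the family estimate
\[
\sum_{\substack{H(t)\leq T\\ C_t(K)\neq\emptyset}} \frac{1}{h_t^\gamma}\;\ll_\varepsilon\; T^{1+\varepsilon}
\]
--- is asserted, not proved. Hasse--Minkowski and Hilbert symbols tell you \emph{whether} $C_t$ has a rational point, not how small the smallest one is; passing from solubility characters to pointwise control of $h_t$ is exactly the missing content, and there is no off-the-shelf large-sieve input that delivers this over a general global field. Without it the rest of the argument collapses.

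The paper does something quite different and never touches $h_t$, Hilbert symbols, or character sums. The governing input is the discriminant-sensitive conic bound (Corollary~\ref{Cor: ConicsDisc}): for a smooth ternary form $F$ one has $N_F\ll \bigl(1+R\,\mathrm{N}(\Delta_0)^{3/2}/\mathrm{N}(\Delta)\bigr)^{1/3}\tau(\Delta)$. For $d=4$ the paper exploits that a quartic del Pezzo with a conic carries \emph{two} conic fibrations $\pi_1,\pi_2$ with $H(\pi_1(\bm{x}))H(\pi_2(\bm{x}))\ll H(\bm{x})$, so one of them has height $\ll B^{1/2}$; summing the discriminant bound over that fibration and controlling the small-discriminant tail via Lemmas~\ref{lem.binary_form_separable_bound} and~\ref{lem.quadratic_lemma_for_dp4_broberg} replaces the Thue--Siegel--Roth step in \cite{browning_S-J} and keeps everything effective. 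For $d=5$ the argument is not a straight sum over fibres at all: when $H([s,t])>B^{1/2}$ the roles are reversed, one fixes $\bm{x}\in\PP^2$, records the congruence $Q_1(\bm{x})\equiv Q_2(\bm{x})\equiv 0\bmod\mathfrak{d}$, and counts lattice points in $\Lambda_{\mathfrak{d}}(\bm{y})$ subject to the extra constraint $\|Q_i(\bm{x})\|_\infty$ small, via the geometry-of-numbers Lemma~\ref{lem.lattice_quadric_bounded_region}. None of this is visible in your outline.
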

This result is new when $d=4$ and $\cha(K)>0$ and new for any global field when $d=5$. We also note that the constants in Theorem~\ref{Th: TheConicBundleTheorem} are all effectively computable, which is in contrast to  the result due to Browning and Swarbrick-Jones~\cite{browning_S-J}. This is because we avoid an application of the Thue--Siegel--Roth theorem, which we note is in general not true in positive characteristic (cf.~\cite{osgood_thue_siegel_roth}).

The results listed so far are by no means exhaustive. In particular, when one considers \emph{singular} del Pezzo surfaces, one can even obtain an asymptotic formula when the degree is $2$ or $3$, but we restrict to the more difficult case of smooth surfaces in this work. Moreover, in \cite{frei_loughran_sofos} Frei, Loughran and Sofos studied lower bounds for del Pezzo surfaces with a conic bundle structure and showed that $N_U(B)\gg B(\log B)^{\rk \Pic (X)-1}$ for del Pezzo surfaces over number fields whose rank of the Picard group is sufficiently large with respect to $d$. 

\subsection{Outline.} The basic idea underlying the proof of Theorem \ref{Th: TheTheorem} is simple: A generic hyperplane section of a del Pezzo surface is a non-singular genus 1 curve and the rank growth hypothesis allows us to obtain uniform upper bounds for the number of rational points of bounded height on elliptic curves. In fact, in Section 3 we establish the following result. 
\begin{prop}\label{Prop.UpperBoundEllCurve}
    Let $E\subset \PP^n$ be a non-singular genus 1 curve of degree $d$ over a global field $K$ with $\cha(K)\neq 2,3$. Assuming that Conjecture \ref{Conj: RGH} holds when $\cha(K)=0$, we have 
    \[
    \#\{x\in E(K)\colon H(x)<B\}\ll B^\varepsilon,
    \]
    where the implied constant only depends on $d$, $n$, $K$ and $\varepsilon$ and $H\colon \PP^n(K)\to \RR_{>0}$ is the usual height on projective space.
\end{prop}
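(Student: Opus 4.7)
The plan is to apply Néron--Tate height theory on the Jacobian of $E$. If $E(K) = \emptyset$ the statement is trivial, so fix any $P_0 \in E(K)$ to identify $E$ with its Jacobian, an elliptic curve over $K$. The embedding $E \subset \PP^n$ corresponds to a very ample line bundle $L$ of degree $d$, and decomposing the class of $L^{\otimes 2}$ in $\Pic(E)$ into symmetric and antisymmetric parts yields a comparison between the naive projective height and the Néron--Tate height of the shape
\[
h_{\PP^n}(x) = \hat{h}(x) + O_E(1),
\]
where $\hat{h}$ is a positive-definite quadratic form on $(E(K)/E(K)_{\mathrm{tors}}) \otimes \RR$ of dimension $r := \rk E(K)$.

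Next, I would count lattice points in $E(K)/E(K)_{\mathrm{tors}}$ lying in the $\hat{h}$-ball of radius $\sqrt{\log B + O_E(1)}$. A standard geometry-of-numbers argument, using lower bounds for the successive minima coming from uniform bounds on canonical heights of non-torsion points, together with a uniform bound on $\#E(K)_{\mathrm{tors}}$ (via Merel's theorem over number fields and its positive-characteristic analogues), gives a count of shape $(\log B)^{r/2}$ up to factors polynomial in $r$. Conjecture~\ref{Conj: RGH}---or Brumer's theorem~\cite{Brumerelliptic} when $\cha(K)>3$---then furnishes $r = o(\log \mathrm{N}(\mathscr{C}_E))$.

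The conclusion follows from a case analysis on the size of the conductor. If $\mathrm{N}(\mathscr{C}_E) \leq B^\eta$ for a suitable small $\eta > 0$, then $r = o(\log B)$ and the lattice count is readily $B^{o(1)}$. If instead $\mathrm{N}(\mathscr{C}_E) > B^\eta$, one invokes a uniform version of Silverman's inequality $h_{\PP^n}(x) - \hat{h}(x) = O(\log |\Delta_E|)$ to force any point of small projective height into a sparse subset of $E(K)$, again yielding the desired estimate. The chief obstacle is \emph{uniformity}: the constants implicit in the height comparison, in Silverman's inequality, and in the lower bounds for $\hat{h}$ on non-torsion points a priori depend on $E$. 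Extracting constants that depend only on $d$, $n$, $K$ and $\varepsilon$ requires a careful quantitative treatment of these invariants, leveraging that $E$ ranges over a moduli problem of bounded dimension once $d$ and $n$ are fixed, and that the various height-theoretic constants can be controlled in terms of a minimal model of $E$.
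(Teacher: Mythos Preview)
Your proposal correctly identifies most of the relevant ingredients—Néron--Tate height theory, uniform torsion bounds, the Hindry--Silverman lower bound for the canonical height of non-torsion points, and Conjecture~\ref{Conj: RGH} to control the rank—and this is indeed exactly how the paper obtains a bound of the shape $N_E(B)\ll (BH_E)^\varepsilon$ for curves in Weierstrass form, where $H_E$ is the naive height of the equation of $E$. But you stop precisely at the hard part: removing the dependence on $H_E$. Your case analysis on the conductor does not do this. When $\mathrm{N}(\mathscr{C}_E)>B^\eta$ you invoke Silverman's inequality, but the error there is $O(\log|\Delta_E|)$, not $O(\log \mathrm{N}(\mathscr{C}_E))$; since the Szpiro ratio is unbounded, a large conductor does not force points of small projective height into any uniformly sparse set. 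And your closing appeal to ``a moduli problem of bounded dimension'' is not a mechanism: the moduli space is non-compact and the constants in question genuinely blow up along it.

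The paper resolves this with a completely different idea that your proposal lacks. Lemma~\ref{Le:SizeCoeffcs} (a Bézout/determinant argument à la Heath-Brown) shows that if an irreducible plane curve of degree $d$ has more than $d^2$ rational points of height $\le B$, then the height of its defining form is bounded polynomially in $B$. Applied to a Weierstrass cubic this gives either $N_E(B)\le 9$ or $H_E\ll B^A$, and in the latter case $(BH_E)^\varepsilon\ll B^{\varepsilon'}$, yielding full uniformity. For general $E\subset\PP^n$ the paper does not rely on an abstract height comparison with $O_E(1)$ error; instead it reduces explicitly to the plane case by projecting away from points of height $O_{d,n,K}(1)$ chosen outside the secant variety of $E$, tracking the effect on heights at each step so that all constants depend only on $d,n,K$. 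Without something playing the role of Lemma~\ref{Le:SizeCoeffcs}, your plan does not close.
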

Heath-Brown \cite{heath1998counting} previously proved Proposition~\ref{Prop.UpperBoundEllCurve} for plane elliptic curves over $\QQ$. However, his results still had a dependence on the height of the elliptic curve, which we were able to remove. Proposition~\ref{Prop.UpperBoundEllCurve} alone is not sufficient to prove Theorem \ref{Th: TheTheorem}. The hyperplane sections can also be singular and we need uniform upper bounds for the number of rational points of bounded height on curves. The determinant method developed by Heath-Brown \cite{HeathBrownCurvesSurfaces} has been used by many authors over the last two decades to establish such estimates. Moreover, the recent work of Paredes and Sasyk~\cite{paredes2021uniform} extended these results using the global determinant method due to Salberger~\cite{TheSalberger} to any global field, which enables us to work over arbitrary global fields. These results will also prove vital when bounding the number of degenerate hyperplane sections, which correspond to rational points on the dual variety of $X$. 

When $d=4$ or $d=5$, even the bounds coming from the determinant method are not strong enough when the hyperplane section contains an irreducible component of degree $d-1$, as we need an additional saving with respect to the height of the hyperplane. To overcome this difficulty, we show that such curves are in fact rational and provide upper bounds by exhibiting a uniform parameterisation of the rational points. 

Finally, to prove Theorem \ref{Th: TheConicBundleTheorem}, we use uniform upper bounds for rational points on conics of Browning and Heath-Brown \cite{BrowningHeathBrownRatPtsHypersurfaces}, that we transfer to the setting of positive characteristic. For quintic del Pezzo surface we combine them with techniques from the geometry of numbers in a similar fashion as Bonolis, Browning and Huang \cite{bonolis2022density}. In particular, in Section~\ref{sec.lattices} we prove a result regarding the number of lattice points in a box depending on the successive minima of the lattice for all global fields, which is likely to be useful for applications outside the context of this paper.
\subsection{Conventions} We will make use of the big-$O$ and Vinogradov's notation $O, o, \ll \gg, \asymp,\dots $. Moreover, we will indicate a dependence of the implied constant on certain parameters by subscripts, unless specified otherwise. The letter $\varepsilon$ will denote an arbitrarily small real number whose exact value may change from one occurrence  to the next. Its appearance together with one of the symbols $\ll, \gg, O$ indicates that the implied constant depends on $\varepsilon$. Similarly, in Sections~\ref{Sec.conicbundles} and \ref{Sec.dp345} the letter $A$ denotes an arbitrarily large constant, whose value can change from line to the next. In particular, we may write expressions like $B^{2A}\ll B^A$, which has the advantage of avoiding introducing notation like $A', A'', A''', \dots$. Finally, we shall use the notation $\alpha \sim R$ to indicate that $\alpha$ lies in the dyadic interval $(R, 2R]$. 

\subsection*{Acknowledgements}
We would like to thank Tim Browning for many useful discussions and comments. We would also like to thank Dan Loughran and Mihran Papikian for useful conversations.
\section{Background} \label{sec.background}
\subsection{Geometry} Throughout this work a \emph{variety} over a field $K$ is a separated $K$-scheme of finite type. In this section $K$ denotes an arbitrary field. \subsubsection{del Pezzo surfaces} In this subsection we review some of the basic geometric properties of del Pezzo surfaces, which can for example be found in the book by Manin~\cite{maninCubicFormsBook}. A del Pezzo surface over a field $K$ is a smooth, projective and geometrically integral surface over $K$ whose anti-canonical divisor $-K_X$ is ample.  Let $\bar{X}=X\times K^{\text{sep}}$, where $K^{\text{sep}}$ denotes a separable closure of $K$. The geometric Picard group $\Pic(\bar{X})$ is a finitely generated $\ZZ$-module and since $X$ is smooth, we can identify $\Pic(\bar{X})$ with the class group of Weil divisors on $X$. Therefore, $\Pic(\bar{X})$ comes with a symmetric bilinear intersection pairing $(~\cdot~) \colon \Pic(\bar{X})\times \Pic(\bar{X})\to \ZZ$. If $C\in \Pic(\bar{X})$, then by abuse of notation we shall write $C^2$ for $C\cdot C$. The degree of $X$ is defined to be $K_X^2$ and satisfies $1\leq d\leq 9$.
\begin{definition}
    Let $C\subset \bar{X}$ be an irreducible curve. Then we say that $C$ is \emph{exceptional} if $C^2=C\cdot K_X=-1$. 
\end{definition}
By the adjunction formula, an exceptional curve has arithmetic genus 0 and hence is $\bar{K}$-isomorphic to $\PP^1$. There are at most finitely many exceptional curves on a del Pezzo surface and their precise number is given in Table~\ref{Ta: Exc.curves}. The anti-canonical divisor induces a birational map $X\to \PP^d$, which is in fact a morphism for $d\geq 2$. When $d\geq 3$ the map is an embedding and realises $X$ as a non-degenerate surface of degree $d$ in $\PP^d$. 
\begin{table}[b]
\begin{tabular}{c|c|c|c|c|c|c|c|c|c}
degree& 9 & 8 & 7 & 6 & 5  & 4  & 3  & 2  & 1   \\ \hline
      & 0 & 0 or 1 & 3 & 6 & 10 & 16 & 27 & 56 & 240
\end{tabular}
\caption{Number of exceptional curves}
\label{Ta: Exc.curves}
\end{table}
If $C\subset \bar{X}$ is a geometrically connected curve of arithmetic genus 0 with $C\cdot K_X = -2$, we say that $C$ is a \emph{conic}.
\begin{definition}
    We say that $X$ admits a conic bundle structure over $K$ if there is a dominant $K$-morphism $X\to \PP^1$ all of whose fibers are plane conics.
\end{definition}
It follows from Lemma~5.1 in \cite{frei_loughran_sofos} that $X$ admits a conic bundle structure over $K$ if and only if contains a conic defined over $K$. In~\cite{frei_loughran_sofos} the authors assume  the ground field to be perfect in their statement; however, an inspection of the proof reveals that this assumption is not used.
\subsubsection{Dual varieties} Let $X\subset \PP^n$ be a variety and denote by $\widehat{\PP}^n$ the dual projective space parameterising hyperplanes in $\PP^n$. We define the \emph{conormal variety} of $X$ to be 
\[
Z(X)= \overline{\{(x,H)\in X_{sm}\times \widehat{\PP}^n\colon T_xX\subset H\}},
\]
equipped with the reduced scheme structure, where $T_xX\subset {\PP}^n$ denotes the embedded tangent space of $X$ at $x$ and $X_{sm}$ denotes the smooth locus of $X$.  Let $\phi\colon Z(X)\to \widehat{\PP}^n$ denote the projection onto the second factor. Then the image $\phi(Z(X))\subset \widehat{\PP}^n$ is a variety, which is called the \emph{dual variety} of $X$ and denoted by $X^*$. We always have the inequality $\dim(X^*)\leq n-1$. We say that $X$ is \emph{reflexive} if $Z(X)=Z(X^*)$ under the natural identification $\PP^n=\widehat{\widehat{\PP^n}}$.

If $X$ is smooth, then a hyperplane $H\in \widehat{\PP}^n$ has singular intersection with $X$ if and only if $H\in X^*$, while if $X$ is not smooth and $X\cap H$ is singular, then $H\in X^*$ or $H$ intersects the singular locus of $X$. 

Our goal of this section is to compute $\deg(X^*)$ when $X$ is a del Pezzo surface. To do so, we need to recall the properties of Chern classes as found in \cite{Fulton}. For a smooth variety $X$ over $K$, let $CH(X)$ denote the Chow ring whose $k$th graded piece $A^k$ is the group of cycles of codimension $k$ modulo rational equivalence and multiplication is given by the intersection pairing. Associated to any vector bundle $\mathcal{F}$ over $X$ are the Chern classes $c_i(\mathcal{F})\in A^i$ satisfying 
\begin{enumerate}
    \item $c_0(\mathcal{F})=1$,
    \item $c_i(\mathcal{F})=0$ if $i>\rk(\mathcal{F})$,
    \item if $\mathcal{F}$ is of rank $r$, then $c_1(\mathcal{F})=c_1(\wedge^r \mathcal{F})$.
\end{enumerate}
Moreover, if $X$ is a smooth surface and $\mathcal{L}$ is a line bundle on $X$ corresponding to a Weil divisor $D$, then $\deg(c_1(\mathcal{L})\cap E)=D\cdot E$ for any Weil divisor $E$ of $X$, where $\cdot $ is the usual intersection pairing on the class group of Weil divisors. 

Let $\mathcal{T}_X$ be the tangent bundle on a smooth variety $X\subset \PP^n$ of dimension $k$. Then we define the \emph{delta invariants} of $X$ to be 
\[
\delta_i(X)\coloneqq \sum_{j=i}^k (-1)^{k-j}{j+1 \choose i+1}\deg(c_{k-j}(X)),
\]
where $c_i(X)=c_i(\mathcal{T}_X)$ and $\deg(c_i(X))=\deg(c_i(X)\cap H^{k-i})$ with $H$ the class of a hyperplane section on $X$. We then have the following result due to Holme~\cite[Theorem 3.4]{Holme}.
\begin{prop}\label{Prop: DualReflexive}
    Let $X\subset \PP^n$ be a smooth variety and suppose that $r_0$ is such that \break ${\delta_0(X)=\cdots = \delta_{r_0-1}(X)=0}$, but $\delta_{r_0}(X)\neq 0$. Then $\dim(X^*)=n-1-r_0$ and $X$ is reflexive if and only if $\deg(X^*)=\delta_{r_0}(X)$. 
\end{prop}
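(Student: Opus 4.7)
The plan is to realise the conormal variety $Z(X)$ as a projective bundle over $X$ and reduce the statement to a Chern-class calculation on this bundle, combined with an application of the biduality theorem. First, I would set up the bundle structure: the fibre of the projection $p \colon Z(X) \to X$ over a point $x$ is the set of hyperplanes in $\PP^n$ containing the embedded tangent space $T_xX$, which forms a linear subspace of $\widehat{\PP}^n$ of dimension $n-k-1$, where $k = \dim X$. Tracking the inclusion of each fibre into $\widehat{\PP}^n$ identifies $Z(X)$ with $\PP(\mathcal{E})$ for a rank-$(n-k)$ vector bundle $\mathcal{E}$ on $X$ built from the conormal bundle $N^\vee_{X/\PP^n}$ suitably twisted by $\mathcal{O}_X(1)$. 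Under this identification the pullback $\lambda = \phi^*\hat{H}$ of the hyperplane class on $\widehat{\PP}^n$ becomes $c_1(\mathcal{O}_{\PP(\mathcal{E})}(1))$.

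Second, using the projective bundle formula $p_*(\lambda^m) = s_{m-(n-k-1)}(\mathcal{E})$ together with the projection formula, I would compute $\phi_*(\lambda^j \cap [Z(X)])$. By support considerations each such pushforward equals $\mu_j \hat{H}^{n-1-j}$ for some integer $\mu_j \geq 0$, and it vanishes precisely when $j < r_0$, where $r_0 = n-1-\dim(X^*)$. The coefficient $\mu_j$ is the degree on $X$ of a Segre-class expression in $\mathcal{E}$. Substituting via the Euler sequence $0 \to \mathcal{O}_X \to V^\vee \otimes \mathcal{O}_X(1) \to T_{\PP^n}|_X \to 0$ and the conormal exact sequence rewrites $\mu_j$ as a universal polynomial in the $c_i(X)$ and the hyperplane class on $X$; a combinatorial expansion should then match $\mu_j$ with $\delta_j(X)$, the binomial coefficients $\binom{j+1}{i+1}$ arising from binomial expansions of powers of $(1+H)$ along the way. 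This yields both the dimension statement $\dim(X^*) = n-1-r_0$ and the identity $e \cdot \deg(X^*) = \delta_{r_0}(X)$, where $e$ is the degree of $\phi$ viewed as a morphism onto $X^*$.

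Finally, reflexivity enters through the integer $e$. The biduality theorem asserts that $Z(X) = Z(X^*)$ under the canonical identification $\PP^n = \widehat{\widehat{\PP^n}}$ if and only if $\phi$ is birational onto $X^*$, i.e.\ $e = 1$. Combined with the identity from the previous step, this gives $\deg(X^*) = \delta_{r_0}(X)$ precisely when $X$ is reflexive. The main obstacle is the Chern-class bookkeeping in the second step: one must correctly identify the bundle $\mathcal{E}$ together with all of its twists and verify that the polynomial in the $c_i(X)$ appearing in the pushforward matches the definition of $\delta_i(X)$ on the nose. Getting signs, twists, and binomial factors to line up requires careful manipulation of the Euler and conormal sequences and some combinatorics of Segre classes.
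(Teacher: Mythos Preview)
The paper does not prove this proposition; it is quoted from Holme. Your outline is essentially the standard argument (projective bundle structure on $Z(X)$, Segre-class computation, reflexivity criterion), so in spirit it matches what one finds in Holme or in Kleiman's survey on tangency and duality.

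There is, however, a real slip in your second step. You propose to study $\phi_*(\lambda^j \cap [Z(X)])$ and claim the resulting coefficients $\mu_j$ detect $r_0$. But $\lambda = \phi^*\hat H$, so the projection formula gives
\[
\phi_*(\lambda^j \cap [Z(X)]) = \hat H^j \cap \phi_*[Z(X)],
\]
and $\phi_*[Z(X)]$ is a single fixed $(n-1)$-cycle on $\widehat\PP^n$: it vanishes whenever $\dim X^* < n-1$ and equals $\deg(\phi)\,[X^*]$ otherwise. Either way all of your $\mu_j$ coincide, and the argument yields no information when $r_0>0$. The correct invariants are the ranks $\mu_j = \int_{Z(X)} \lambda^{\,n-1-j} h^{\,j}$ with $h = p^*H$; equivalently, one pushes forward $h^j \cap [Z(X)]$ (not $\lambda^j\cap[Z(X)]$) along $\phi$. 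These genuinely vary with $j$, the support argument then legitimately forces $\mu_j = 0$ for $j<r_0$, and the projective bundle formula you quote computes them as $\int_X s_{k-j}(\mathcal E)\cdot H^j$, which unwinds to $\delta_j(X)$.

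A second, smaller issue: for $r_0>0$ the map $\phi$ is not generically finite, so $e$ cannot be interpreted as $\deg\phi$. The integer appearing in $\mu_{r_0} = e\cdot\deg X^*$ is the degree (with respect to $h$) of the generic contact locus, i.e.\ the fibre of $\phi$ over a general point of $X^*$. That this equals $1$ precisely when $X$ is reflexive follows from the Monge--Segre--Wallace criterion together with the linearity of the contact locus in the reflexive case; it is not quite ``the biduality theorem'' as you phrase it.
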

\begin{remark}
    In fact Holme's result is even valid for singular varieties. However, in this case the delta invariants may differ from the definition above.
\end{remark}
We now have everything at hand to compute the degree of the dual variety of a del Pezzo surface. 
\begin{prop}
    Let $X\subset \PP^d$ be a del Pezzo surface of degree $d \geq 3$ embedded anti-canonically over a field $K$ with $\cha(K)\neq 2,3$. Then $X^*\subset \widehat{\PP}^d$ is a hypersurface with $\deg(X^*)=12$. 
\end{prop}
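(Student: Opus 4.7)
The plan is to invoke Proposition~\ref{Prop: DualReflexive} with $n=d$ and $k=\dim X=2$, which reduces the statement to computing $\delta_0(X)=12$ and verifying that $X$ is reflexive. Showing $\delta_0(X)\neq 0$ will force $r_0 = 0$, giving $\dim X^* = d-1$, and reflexivity will upgrade this to $\deg(X^*)=\delta_0(X)=12$.

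Specialising the definition of the delta invariants to $k=2$ and $i=0$ yields
\[
\delta_0(X)=\deg(c_2(X))-2\deg(c_1(X))+3\deg(c_0(X)).
\]
Since $X$ is anti-canonically embedded, its hyperplane class is $H=-K_X$, so $\deg(c_0(X))=H^2=K_X^2=d$; and since $c_1(\mathcal{T}_X)=-K_X$, we have $\deg(c_1(X))=c_1(\mathcal{T}_X)\cdot H=(-K_X)\cdot(-K_X)=d$. For $\deg(c_2(X))=c_2(\mathcal{T}_X)$, I would apply Noether's formula
\[
12\,\chi(\O_X)=K_X^2+c_2(X).
\]
For any del Pezzo surface $\chi(\O_X)=1$: $h^0(\O_X)=1$ by geometric integrality and properness, $h^2(\O_X)=h^0(K_X)=0$ by Serre duality and the ampleness of $-K_X$, and $h^1(\O_X)=0$ by the (geometric) rationality of del Pezzo surfaces, which holds in arbitrary characteristic. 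Hence $c_2(X)=12-d$ and
\[
\delta_0(X)=(12-d)-2d+3d=12\neq 0,
\]
so by Proposition~\ref{Prop: DualReflexive}, $X^*$ is indeed a hypersurface in $\widehat{\PP}^d$.

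To deduce $\deg(X^*)=12$ from the same proposition, one still needs that $X$ is reflexive. In characteristic zero this is automatic by the Monge--Segre--Wallace theorem. In positive characteristic $p>3$ the reflexivity is equivalent to separability of the Gauss map $\gamma\colon X\to \mathrm{Gr}(2,d)$, and this is where I expect the main obstacle to lie: one has to check, via a local-coordinate computation, that the second fundamental form of $X$ is generically non-degenerate. The assumption $\cha(K)\neq 2,3$ should enter precisely at this step, the delicate case being $d=9$, where $X=\PP^2$ is embedded by the cubic Veronese and reflexivity genuinely fails in characteristic $3$. Once separability of $\gamma$ is secured, Proposition~\ref{Prop: DualReflexive} delivers $\deg(X^*)=\delta_0(X)=12$.
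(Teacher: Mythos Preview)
Your computation of $\delta_0(X)=12$ is exactly the paper's argument: the hyperplane class is $-K_X$, so $\deg(c_0)=\deg(c_1)=d$, and Noether's formula together with $\chi(\O_X)=1$ gives $\deg(c_2)=12-d$. No issues there.

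The gap is in the reflexivity step, and you flag it yourself: you reduce to separability of the Gauss map and then defer to an unspecified ``local-coordinate computation'' of the second fundamental form. That is not a proof; for several values of $d$ (notably $d=9$, the cubic Veronese) this computation is genuinely delicate and characteristic-sensitive, and you give no indication of how it would actually be carried out uniformly in $d$.

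The paper bypasses this entirely with a one-line divisibility trick. By Holme's formula (the last equation on p.~152 of \cite{Holme}), the conormal projection $\phi\colon Z(X)\to X^*$ satisfies
\[
\deg(\phi)\cdot\deg(X^*)=\delta_0(X)=12.
\]
Hence $\deg(\phi)\mid 12$, so the degree of the field extension $K(Z(X))/K(X^*)$ divides $12$. Since $\cha(K)\neq 2,3$, this extension is automatically separable, and the Monge--Segre--Wallace criterion (Kleiman \cite[(4) Theorem]{Kleiman}) then gives reflexivity directly. This is both shorter and more robust than any case-by-case second-fundamental-form analysis, and it makes transparent exactly why the hypothesis $\cha(K)\neq 2,3$ is the right one.
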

\begin{proof}
    Let us first compute $\delta_0(X)$. Since $X$ is embedded anti-canonically,
 the definition of $\delta_0(X)$ gives 
    \begin{equation}\label{Eq: Delta.dP}
    \delta_0(X)= \deg(c_2(X))-2\deg(c_1(X)\cap (-K_X))+3 (-K_X)^2.
    \end{equation}
    We have by definition $(-K_X)^2=d$. Moreover, as $\mathcal{T}_X$ has rank $2$, we have $c_1(X)=c_1(\wedge^2 \mathcal{T}_X)= c_1(-K_X)$ and hence $\deg(c_1(X)\cap (-K_X))= d$ as well. It thus remains to compute $\deg(c_2(X))$. Let $\chi(X,\O_X)$ be the Euler characteristic of the structure sheaf on $X$. By Lemma 3.2.1 of Koll\'{a}r~\cite{kollar}, we have $\chi(X,\O_X)=1$. Moreover, Noether's formula (see Example 15.2.2 of Fulton~\cite{Fulton}) gives 
    \[
    \chi(X,\O_X)=((-K_X)^2 +\deg (c_2(X))/12,
    \]
    which implies $\deg(c_2(X))=12-d$. Once combined with \eqref{Eq: Delta.dP}, we obtain $\delta_0(X)=12$. 

    In the light of Proposition~\ref{Prop: DualReflexive} it thus remains to show that $X$ is reflexive. To do so, we make use of the Monge-Segre-Wallace criterion in the form given by Kleiman \cite[(4) Theorem]{Kleiman}, which asserts that $X$ is reflexive if and only if the map $\phi\colon Z(X)\to X^*$ is separable. The map $\phi$ is known to be finite and by definition its degree $\deg(\phi)$ is the degree of the induced extension of function fields $K(X^*)/K(Z(X))$. By the last equation on page 152 of Holme~\cite{Holme}, we have $\deg(\phi)\deg(X^*)=\delta_0(X)$. In particular, $\deg(\phi)\mid 12$. As we assume that $\cha(K)\neq 2,3$, this automatically implies that $K(X^*)/K(Z(X))$ is separable and hence $X$ is reflexive by the Monge-Segre-Wallace criterion. 
\end{proof}
\begin{remark}
    At least the condition $\cha(K)\neq 2$ is necessary in the last proposition. Indeed, it follows for example from Lemma~4.1 of the authors' work \cite{glas2022question} that if $K=\FF_q(t)^{\text{sep}}$ with $\cha(\FF_q)=2$, then the dual variety of the Fermat cubic surface is again a Fermat cubic surface.
\end{remark}
\subsection{Algebraic number theory} We call $K$ a \emph{global field} if it is a finite extension of $\QQ$ or the function field of a curve over a finite field. When $K$ is a function field of positive characteristic $p$, suppose $\FF_q$ is the field of constants of $K$. We can then always find an element $t\in K$ that is transcendental over $\FF_q$ and such that $K/\FF_q(t)$ is separable. Note that the choice of $t$ is not unique, but this is irrelevant to us. We shall then write $k=\FF_q(t)$ and if $K$ is a number field, we write $k=\QQ$ and in either case define $d_K=[K\colon k]$.

The ring of integers $\O_K$ of $K$ is  by definition the integral closure of $\ZZ$ and $\FF_q[t]$ respectively. By definition, a non-archimedean place $\p$ of $K$ is a discrete valuation ring $\O_{(\p)}\subset K$ with field of fractions $K$ with the additional constraint $\FF_q\subset \O_{(\p)}$ when $\cha(K)>0$. A non-archimedean place corresponds to an embedding $K\to \CC$ and only exists in characteristic $0$. We then define $\Omega_{K,\infty}$ to be the set of places lying above the infinite place $t^{-1}$ in $\FF_q(t)$ when $\cha(K)>0$ and to be the set of archimedean places when $\cha(K)=0$. In either case $\Omega_K$ denotes the set of all places of $K$ and $\Omega_{K,f}=\Omega_K\setminus \Omega_{K,\infty}$ the set of finite places. In addition, we let $s_K$ be the cardinality of $\Omega_{K,\infty}$. 

If $\nu$ is a place of $K$ and $\mu$ the corresponding place of $k$ lying below it, we let $K_\nu$ and $k_\mu$ be the completions of $K$ and $k$ with respect to $\nu$ and $\mu$ respectively and define the local degree $d_\nu\coloneqq [K_\nu\colon k_\mu]$. If $\nu$ is non-archimedean, we let $\O_\nu \subset K_\nu$ be the ring of integers and $\m_\nu$ its maximal ideal. For any place $\nu$, we define an absolute value on $K$ via 
\[
|x|_\nu \coloneqq \begin{cases}
    |x|_\infty^{d_\nu} &\text{if }\nu \text{ is archimedean,}\\
    \#(\O_{\nu}/\m_\nu)^{-v_\nu(x)}&\text{if }\nu \text{ is non-archimedean,}
\end{cases}
\]
where $|\cdot|_\infty$ denotes the usual absolute value on $\CC$ and $v_\nu$ the normalised valuation on $K$ induced by $\nu$. Via the embedding $K\to K_\nu$ corresponding to the place $\nu$ this gives rise to an absolute value on $K$.
\begin{remark}
    Note that if $\nu$ is a complex place, then strictly speaking $|\cdot |_\nu$ is not an absolute value in the usual sense, as it does not satisfy the triangle inequality. However, it still satisfies $|x+y|_\nu \leq 4 \max\{|x|_\nu, |y|_\nu\}$.
\end{remark}
When $\p$ is a prime ideal of $\O_K$, we define its norm to be $\mathrm{N}(\p)\coloneqq \#(\O_K/\mathfrak{p})$ and extend it multiplicatively to all fractional of $\O_K$. If $\alpha_1,\dots, \alpha_m\in K$, we let $\langle \alpha_1,\dots, \alpha_m\rangle$ be the fractional ideal generated by $\alpha_1,\dots, \alpha_m$. By abuse of notation, we shall then write $\mathrm{N}(\alpha_1,\dots, \alpha_m)$ instead of $\mathrm{N}(\langle \alpha_1,\dots,  \alpha_m\rangle)$. Similarly, if $\nu$ is a non-archimedean place we also write $\mathrm{N}(\cdot )$ for the ideal norm on $\O_\nu$. When $K$ is a function field, a divisor is by definition a formal sum $\mathfrak{a}=\sum_{\nu \in \Omega_K}e_\nu \nu$, where $e_\nu \in \ZZ$ is non-zero for at most finitely many $\nu$. We then define $\deg (\mathfrak{a})=\sum e_\nu \deg (\nu)$, where $\deg(\nu) = [(\O_\nu / \mathfrak{m}_\nu)\colon \FF_q]$ is the degree of the residue field extension. We shall then also write $\mathrm{N}(\mathfrak{a})=q^{\deg (\mathfrak{a})}$ and refer to it as the norm of $\mathfrak{a}$.

Further, given $K_\nu$ there exists a standard additive character $\psi_\nu \colon K_\nu \rightarrow \CC^\times$ as defined in~\cite[Chapter 7]{ramakrishnan_valenza}. In particular, these have the property that when $\nu$ is a finite place then $\O_\nu \subset K_\nu$ is the maximal subgroup on which $\psi_\nu$ acts trivially. As a result we obtain the following character orthogonality relation. The proof is standard, so we omit it here.
\begin{lemma} \label{lem.character_orthogonality}
    Let $\nu$ be a finite place of $K$, let $\pi$ be a uniformizer of $K_\nu$ and let $r \geq 1$ be an integer. If $a \in \O_\nu$ we have
    \[
    \frac{1}{\mathrm{N}(\pi)^r} \sum_{x \in \O_\nu/(\pi^r)} \psi_\nu \left(\frac{ax}{\pi^r} \right) = \begin{cases}
        1 \quad &\text{if $a \in (\pi)^r$,} \\
        0 &\text{otherwise.}
    \end{cases}
    \]
\end{lemma}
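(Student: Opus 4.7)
The plan is to recognize the left-hand side as the average over the finite abelian group $\O_\nu/(\pi^r)$ of the character $\chi_a\colon \O_\nu/(\pi^r)\to \CC^\times$ given by $\chi_a(x)=\psi_\nu(ax/\pi^r)$, and then to invoke standard character orthogonality on a finite abelian group. The only work is to verify that $\chi_a$ is a well-defined character on the quotient and to decide exactly when it is the trivial character.

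To check well-definedness, if $x-x'\in (\pi)^r$ then $a(x-x')/\pi^r\in a\O_\nu\subseteq \O_\nu$, on which $\psi_\nu$ is trivial by hypothesis, so the map factors through $\O_\nu/(\pi^r)$; additivity is inherited from $\psi_\nu$. For the first case, if $a\in (\pi)^r$, writing $a=\pi^r a'$ with $a'\in \O_\nu$ gives $ax/\pi^r=a'x\in \O_\nu$ for every $x$, hence $\chi_a\equiv 1$ and the sum collapses to $\#(\O_\nu/(\pi^r))=\mathrm{N}(\pi)^r$, matching the claimed value $1$ after normalisation.

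If instead $a\notin (\pi)^r$, I would write $a=u\pi^j$ with $u\in \O_\nu^\times$ and $0\leq j<r$, so that as $x$ ranges over $\O_\nu$ the quantity $ax/\pi^r=ux/\pi^{r-j}$ ranges over the fractional ideal $\pi^{-(r-j)}\O_\nu$, which strictly contains $\O_\nu$. The main point — and essentially the only non-formal step — is then to leverage the fact that $\O_\nu$ is the \emph{maximal} subgroup of $K_\nu$ on which $\psi_\nu$ acts trivially, not merely some subgroup on which it does. This maximality forces $\psi_\nu$ to take a non-trivial value somewhere on $\pi^{-(r-j)}\O_\nu$, so $\chi_a$ is a non-trivial character of $\O_\nu/(\pi^r)$, and orthogonality on a finite abelian group gives $\sum_x\chi_a(x)=0$, as required.
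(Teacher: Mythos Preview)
Your proof is correct. The paper does not actually give a proof of this lemma; it simply states that ``the proof is standard, so we omit it here.'' Your argument is exactly the standard one the authors have in mind: recognise the sum as the average of a character on the finite abelian group $\O_\nu/(\pi^r)$, use the defining property that $\O_\nu$ is the maximal subgroup on which $\psi_\nu$ is trivial to decide when this character is trivial, and conclude by orthogonality.
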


\subsection{Height functions} We can construct an exponential height on $\PP^n(K)$ via 
\[
H(\bm{x})= \prod_{\nu \in \Omega_K}\max_{0\leq i \leq n}|x_i|_\nu
\]
whenever $\bm{x}=[x_0, \hdots , x_n]\in \PP^n(K)$. Note that our normalisation of the absolute values implies that the product formula $\prod_{\nu \in \Omega_K}|x|_\nu=1$ holds for any $x\in K$, so that the height on $\PP^n(K)$ is indeed well defined. Observe that if $\bm{x}=[x_0,\hdots , x_n] \in \PP^n(K)$ with $(x_0,\hdots, x_n)\in \O_K^{n+1}$, then 
\[
H(\bm{x})= \frac{1}{\mathrm{N}(x_0,\dots, x_n)}\prod_{\nu\in\Omega_{K,\infty}}\max_{0\leq i \leq n}|x_i|_\nu.
\]
In addition, for $\bm{x}\in\O_K^{n+1}$ we define the norms 
\[
\norm{\bm{x}}\coloneqq \max_{0\leq i \leq n} \max_{\nu\mid \infty} |x_i|_\nu \quad \text{and}\quad \norm{\bm{x}}_\infty \coloneqq \prod_{\nu \mid \infty}\max_{0\leq i \leq n} |x_i|_\nu.
\]
Given $\bm{R}=(R_\nu)_{\nu\mid\infty} \in \RR_{>0}^{s_K}$ we define $|\bm{R}|=\prod_{\nu\mid\infty}R_\nu$ and 
\[
L(\bm{R})=\{x\in \O_K\colon |x|_\nu \leq R_\nu \text{ for all }\nu\mid \infty\}.
\]
The following results are all standard over number fields and should be well known over function fields. Due to a lack of statements in the literature, we provide full proofs in the case of function fields.
\begin{lemma}\label{Le: Number.OK.Points}
Let $\bm{R}=(R_\nu)_{\nu\mid\infty} \in \RR_{>0}^{s_K}$ and let $\mathfrak{a}\subset \O_K$ be an integral ideal. We have
\[
|\bm{R}|\mathrm{N}(\mathfrak{a})^{-1}\ \ll_K \#(L(\bm{R})\cap\mathfrak{a})\ll_K \max\{1, |\bm{R}|\mathrm{N}(\mathfrak{a})^{-1}\}.
\]
\end{lemma}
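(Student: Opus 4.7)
The strategy is to handle the function field case via Riemann--Roch, since the number field case is the classical Minkowski lattice point count which the paper treats as standard. Let $C/\FF_q$ be the smooth projective curve with function field $K$ and $g$ its genus. The plan is to realise $L(\bm{R})\cap \mathfrak{a}$ as the global sections of a line bundle on $C$ and apply Riemann--Roch.

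First I would round each $R_\nu$ down to the nearest element of the discrete value group $|K_\nu^\times|_\nu = q^{\deg(\nu)\ZZ}$, which costs at most a factor depending only on $K$ in $|\bm{R}|$. Write the resulting $R_\nu$ as $q^{m_\nu\deg(\nu)}$ for integers $m_\nu$, and set
\[
D_\infty(\bm{R}) := \sum_{\nu\mid\infty} m_\nu\,[\nu], \qquad D_{\mathfrak{a}} := \sum_{\p\in\Omega_{K,f}} v_\p(\mathfrak{a})\,[\p].
\]
The condition $f\in L(\bm{R})\cap\mathfrak{a}$ unpacks as $v_\p(f)\geq v_\p(\mathfrak{a})$ at every finite $\p$ together with $v_\nu(f)\geq -m_\nu$ at each $\nu\mid\infty$, that is, $\div(f)+D_\infty(\bm{R})-D_{\mathfrak{a}}\geq 0$. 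Hence there is an identification of $\FF_q$-vector spaces $L(\bm{R})\cap\mathfrak{a} = H^0(C,\mathcal{O}(D))$ with $D := D_\infty(\bm{R})-D_{\mathfrak{a}}$, and one computes $\deg D = \log_q(|\bm{R}|/\mathrm{N}(\mathfrak{a}))$.

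Riemann--Roch gives $h^0(D) = \deg D + 1 - g + h^0(K_C - D)$, where $K_C$ is a canonical divisor of $C$. In the main regime $\deg D \geq 2g-1$ the second term vanishes and
\[
\#(L(\bm{R})\cap\mathfrak{a}) = q^{h^0(D)} = q^{1-g}\cdot\frac{|\bm{R}|}{\mathrm{N}(\mathfrak{a})},
\]
simultaneously producing the upper and lower bounds. For $\deg D < 2g-1$, equivalently $|\bm{R}|/\mathrm{N}(\mathfrak{a}) < q^{2g-1}$, the crude inequality $0\leq h^0(D)\leq \max(0,\deg D + 1)$ yields $\#(L(\bm{R})\cap\mathfrak{a})\leq q^{2g}$, which is $O_K(1)$; the claimed main term $|\bm{R}|/\mathrm{N}(\mathfrak{a})$ is also $O_K(1)$ on this range, so both sides of the desired inequalities are $\asymp_K 1$ there.

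The main obstacle is essentially bookkeeping: the real-valued $R_\nu$ must be rounded into the discrete value group of $K_\nu$ without losing more than a uniformly bounded factor, and the dictionary between integral ideals and effective divisors must be handled carefully at the infinite places. The chief appeal of this approach is that it entirely avoids any analysis of the shape of $\mathfrak{a}$ as a lattice in $\prod_{\nu\mid\infty}K_\nu$, which would be the principal nuisance in a direct geometry-of-numbers proof, since $\mathfrak{a}$ can be arbitrarily skewed even though its covolume is always $\asymp_K \mathrm{N}(\mathfrak{a})$.
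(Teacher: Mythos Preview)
Your proposal is correct and takes essentially the same Riemann--Roch approach as the paper: both identify $L(\bm{R})\cap\mathfrak{a}$ with the Riemann--Roch space of the divisor $D=D_\infty(\bm{R})-D_{\mathfrak{a}}$ and read off the exact count $q^{1-g}|\bm{R}|/\mathrm{N}(\mathfrak{a})$ once $\deg D>2g-2$. The only cosmetic difference is in the low-degree range: the paper argues that $\ell(D)$ assumes only finitely many values because $\Pic(C)$ has finite fibres over degrees $0,\dots,2g-2$, whereas you invoke the elementary bound $h^0(D)\le\max(0,\deg D+1)$ directly; your route is arguably cleaner here.
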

\begin{proof}
    If $K$ is a number field, this is Theorem 0 in Chapter V\S1 of Lang~\cite{LangAlgNT}. We may therefore assume that $K$ is a function field. Without loss of generality, we may assume that $R_\nu =q^{r_\nu}$  for some $r_\nu\in \ZZ$, where $q^{r_\nu}\in |K_\nu|_\nu$. In particular, we may choose elements $a_\nu\in K_\nu$ such that $|a_\nu|_\nu=R_\nu$ for all $\nu\mid\infty$. The condition $x\in (L(\bm{R})\cap \mathfrak{a})$ is then equivalent to 
    \begin{equation}\label{Eq: RRineq}
    v_\nu(x)\geq -v_\nu(a_\nu) \text{ for all }\nu\mid\infty \quad\text{and}\quad v_{\mathfrak{p}}(x)\geq v_{\mathfrak{p}}(\mathfrak{a}) \text{ for all }\mathfrak{p}\mid\mathfrak{a}.
    \end{equation}
    Let $C$ be the smooth projective curve associated to the function field $K$ and let $g$ be its genus. We then define the divisor $D\in \Pic(C)$ to be $D=\sum_{\nu\mid\infty}v_\nu(a_\nu)\nu-\sum_{\mathfrak{p}\nmid \infty}v_{\mathfrak{p}}(\mathfrak{a})\mathfrak{p}$. For $x\in K$, denote by $(x)=\sum_\nu v_\nu(x)\nu$ the associated divisor inside $\Pic(C)$ and define the Riemann--Roch space
    \[
    L(D)=\{x\in K^*\colon (x)+D\geq 0\}\cup\{0\},
    \]
    where the notation $(x)+D\geq 0$ means that the divisor is effective. It is then clear from \eqref{Eq: RRineq} that $L(D)=L(\bm{R})\cap\mathfrak{a}$. The Riemann-Roch space defines an $\FF_q$-vector space and we denote its dimension by $\ell(D)$. 

    Let $\deg\colon \Pic(C)\to \ZZ$ be the degree map, which for the divisor $D$ defined above satisfies $\deg(D)=\sum_{\nu\mid \infty}\log_q(R_\nu) - \log_q(\mathrm{N}(\mathfrak{a}))$. If $\deg(D)\leq 2g-2$, then as the degree map has finite fibers whose cardinality is given by the class number of $K$ and $L(D)$ only depends on the class of $D$ in $\Pic(C)$, there are only $O(1)$ possibilities for $\ell(D)$. We may therefore assume that $\deg(D)>2g-2$ from now on. In this case the Riemann-Roch Theorem in the form given by Rosen~\cite[Corollary 4]{rosen} tells us that $\ell(D)=\deg(D)-g+1$, so that 
    \[
    \#(L(\bm{R})\cap \mathfrak{a})=q^{\ell(D)}=q^{1-g}|\bm{R}|\mathrm{N}(\mathfrak{a})^{-1},
    \]
    which completes the proof.
\end{proof}

Fix $\mathfrak{a}_1,\dots, \mathfrak{a}_h\subset \O_K$ to be a full set of representatives in the class group of $K$ once and for all. Whenever we indicate that an implied constant depends on $K$, then it is implicitly allowed to also depend on our choice of the ideal class group representatives.

Unless $K$ is a principal ideal domain one cannot longer ensure that an element in $\mathbb{P}^n(K)$ has a representative in $\O_K^{n+1}$ such that the coordinate entries generate $\O_K$. Instead in this more general setting the set of primitive vectors is replaced by the set
\[
Z_n' \coloneqq \left\{\bm{x} \in \O_K^{n+1} \setminus \{\bm{0}\} \colon (x_0, \hdots, x_n) = \mathfrak{a}_i \text{ for some } i=1, \hdots, h \right\}.
\]
Note that if $\bm{x} \in Z_n'$ is a representative for an element $x \in \PP^n(K)$ then we have $H(x) \asymp \norm{\bm{x}}_\infty$. Since $\norm{u}_\infty = 1$ for any unit $u \in \O_K$ the set $\{ \bm{x} \in Z_n \colon \norm{\bm{x}}_\infty \leq B\}$ is potentially infinite.
\begin{lemma} \label{lem.good_units}
    Let $\lambda_\nu\in\RR_{>0}$ be given for $\nu\mid \infty$. Then there exists a unit $u\in \O_K^\times$ and $t\in \RR$ such that 
    \[
    \lambda_\nu \asymp_K \frac{|u|_\nu}{t}.
    \]
\end{lemma}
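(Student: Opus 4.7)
The plan is to deduce this from Dirichlet's unit theorem (or its function field analogue due to F.~K.~Schmidt), which tells us that the logarithmic embedding
\[
\Phi\colon \O_K^\times \longrightarrow \RR^{s_K}, \qquad u\longmapsto (\log|u|_\nu)_{\nu\mid \infty},
\]
has image lying in the hyperplane $H=\{(x_\nu)_{\nu\mid\infty}\in \RR^{s_K}\colon \sum_\nu x_\nu=0\}$ (by the product formula, since $|u|_\nu=1$ for all finite $\nu$), and that $\Phi(\O_K^\times)$ is a lattice of full rank $s_K-1$ in $H$, possibly modulo a finite torsion subgroup. In particular, this lattice admits a finite covering radius $R_K>0$ that depends only on $K$.

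Given the data $(\lambda_\nu)_{\nu\mid\infty}$, the idea is to split the vector $(\log\lambda_\nu)_\nu$ into a scalar part and a part lying in $H$, and then approximate the latter by the lattice $\Phi(\O_K^\times)$. Concretely, define
\[
L=\frac{1}{s_K}\sum_{\nu\mid\infty}\log\lambda_\nu, \qquad \mu_\nu=\log\lambda_\nu-L,
\]
so that $(\mu_\nu)_\nu\in H$ by construction. Using the covering radius property, pick $u\in\O_K^\times$ with
\[
\max_{\nu\mid\infty}\bigl|\mu_\nu-\log|u|_\nu\bigr|\leq R_K.
\]
Setting $t=e^{-L}\in\RR_{>0}$, we then compute
\[
\frac{|u|_\nu/t}{\lambda_\nu}=\frac{e^{L+\log|u|_\nu}}{e^{L+\mu_\nu}}=e^{\log|u|_\nu-\mu_\nu},
\]
which is bounded above by $e^{R_K}$ and below by $e^{-R_K}$, yielding $\lambda_\nu\asymp_K |u|_\nu/t$ as required.

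The only step that requires care is the invocation of the lattice structure, which is classical but needs to be correctly formulated in the function field case, where $\O_K^\times$ is an extension of $\FF_q^\times$ by a free abelian group of rank $s_K-1$ and the embedding $\Phi$ has finite kernel; either way the image is a full lattice in $H$ and the covering radius is finite and depends only on $K$. In the degenerate case $s_K=1$ the hyperplane $H$ is trivial, and one simply takes $u=1$ and $t=1/\lambda_{\nu_0}$ for the unique infinite place $\nu_0$. I do not expect any genuine obstacle beyond quoting the correct version of the unit theorem.
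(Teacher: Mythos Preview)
Your proof is correct and follows essentially the same approach as the paper's: both arguments reduce to the fact that the logarithmic image of $\O_K^\times$ together with the diagonal line $\RR\cdot\bm{1}$ forms a full-rank lattice in $\RR^{s_K}$ (equivalently, that $\Phi(\O_K^\times)$ is a full lattice in the trace-zero hyperplane), so that the quotient is compact and any point can be approximated within a $K$-dependent radius. The only cosmetic difference is that the paper works directly with the lattice $\phi(\O_K^\times)\oplus\bm{1}\ZZ\subset\RR^{s_K}$ and invokes compactness of $\RR^{s_K}/\Gamma$, whereas you first project onto the hyperplane $H$ and then handle the scalar $t$ separately; these are equivalent formulations of the same argument.
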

\begin{proof}
    If $K$ is a number field, this is proved in the Lemma on p. 187 of \cite{SerreLecturesMordellWeil}, so that we may assume that $K$ is a function field from now on. Upon defining $\lambda_\nu'=\log_q\lambda_\nu$, the statement of the lemma is equivalent to 
    \[
    |\lambda_\nu'+\log_q(t)-\log_q(u)|\leq C,
    \]
    for some fixed constant $C=C(K)>0$. If we define 
    \[
    \phi\colon \O_K^\times \to \RR^{s_K}, \quad u\mapsto (\log_q|u|_\nu)_{\nu\mid \infty},
    \]
    then $\Gamma=\phi(\O_K^\times)\oplus \bm{1}\ZZ$ is a lattice of rank $s_K$, where $\bm{1}=(1,\dots, ,1)$. Thus the quotient $\RR^{s_K}/\Gamma$ is compact and we can find a unit $u\in \O_K^\times$ and $t'\in \ZZ$ such that 
    \[
    |\lambda_\nu'+t'-\log_q(u)|\leq C,
    \]
    where $C$ only depends on $K$. The statement now follows upon setting $t=q^{t'}$. 
\end{proof}

\begin{lemma}\label{Le: GoodReps}
    There exists constants $c_1,c_2>0$ depending on $K$ such that every member of $\PP^n(K)$ has a representative $\bm{x}\in Z_n'$ such that $c_1\norm{\bm{x}} \leq \norm{\bm{x}}_\infty^{1/s_K}\leq c_2 \norm{\bm{x}}$. 
\end{lemma}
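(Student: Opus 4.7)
The plan is to begin with any representative $\bm y \in Z_n'$ of $x \in \PP^n(K)$ and then multiply through by a carefully chosen unit $u \in \O_K^\times$ to balance the archimedean contributions. The existence of some $\bm y \in Z_n'$ follows from the fact that any integral representative $\bm y'$ has coordinate ideal $(y'_0,\dots,y'_n)$ lying in some class $[\mathfrak{a}_i]$, so a suitable scalar $\lambda \in K^\times$ gives $\lambda^{-1}\bm y' \in Z_n'$; and $Z_n'$ is stable under scaling by units since $(uy_0,\dots,uy_n) = (y_0,\dots,y_n)$ when $u \in \O_K^\times$. The upper bound $\norm{\bm x}_\infty^{1/s_K} \leq \norm{\bm x}$ is automatic (the geometric mean of $s_K$ positive reals is at most their maximum, so in fact $c_2 = 1$ works), so only the lower bound requires work. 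The driving observation is that the product formula forces $\norm{u\bm y}_\infty = \norm{\bm y}_\infty$ for any unit $u$, so rescaling by units preserves $\norm{\cdot}_\infty^{1/s_K}$ while giving us the freedom to try to shrink $\norm{\cdot}$.

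Write $M_\nu := \max_i |y_i|_\nu$ for $\nu\mid\infty$, so that $\norm{\bm y} = \max_{\nu\mid\infty} M_\nu$ and $\norm{\bm y}_\infty = \prod_{\nu\mid\infty}M_\nu$. The goal is to produce $u \in \O_K^\times$ with $|u|_\nu M_\nu \asymp_K \norm{\bm y}_\infty^{1/s_K}$ for every $\nu \mid \infty$. I would apply Lemma~\ref{lem.good_units} with $\lambda_\nu := \norm{\bm y}_\infty^{1/s_K}/M_\nu$ to obtain $u \in \O_K^\times$ and $t \in \RR_{>0}$ with $\lambda_\nu \asymp_K |u|_\nu / t$ for all $\nu \mid \infty$. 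Taking the product over $\nu \mid \infty$ and using $\prod_{\nu\mid\infty}|u|_\nu = 1$ (the product formula, together with $|u|_\nu = 1$ at finite places) along with $\prod_{\nu\mid\infty}\lambda_\nu = 1$ (by construction), the relation collapses to $t^{s_K} \asymp_K 1$; hence $t \asymp_K 1$ and the desired equivalence $|u|_\nu M_\nu \asymp_K \norm{\bm y}_\infty^{1/s_K}$ follows.

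Setting $\bm x := u \bm y$, one has $\bm x \in Z_n'$ and $\norm{\bm x}_\infty = \norm{\bm y}_\infty$, while $\norm{\bm x} = \max_{\nu\mid\infty} |u|_\nu M_\nu \asymp_K \norm{\bm x}_\infty^{1/s_K}$, which yields constants $c_1, c_2 > 0$ depending only on $K$ (and on the fixed ideal class group representatives). There is no real obstacle beyond invoking Lemma~\ref{lem.good_units}: its nontrivial ingredient in positive characteristic --- the Dirichlet-unit-theorem style statement that $\phi(\O_K^\times)\oplus \bm{1}\ZZ$ is a full-rank lattice in $\RR^{s_K}$ --- has already been handled in the excerpt, and everything else reduces to bookkeeping with the product formula.
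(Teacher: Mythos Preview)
Your proof is correct and follows essentially the same approach as the paper's: both start from an arbitrary representative in $Z_n'$, invoke Lemma~\ref{lem.good_units} to produce a balancing unit, and then use the product formula to pin down the auxiliary parameter $t$. The only cosmetic difference is that the paper applies Lemma~\ref{lem.good_units} with $\lambda_\nu = M_\nu$ and deduces $t^{-1}\asymp \norm{\bm y}_\infty^{1/s_K}$, whereas you pre-normalise by setting $\lambda_\nu = \norm{\bm y}_\infty^{1/s_K}/M_\nu$ so that $\prod_\nu \lambda_\nu = 1$ forces $t\asymp 1$ directly; your observation that $c_2=1$ always works is a nice bonus.
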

\begin{proof}
    Let $\bm{y}\in \PP^n(K)$ and suppose $\bm{x}'=(x_0',\dots, x'_n)\in Z_n'$ is a representative for $\bm{y}$. Define $\lambda_\nu = \max\{|x'_0|_\nu,\dots, |x'_n|_\nu\}$. Then by Lemma \ref{lem.good_units} we can find a unit $u\in \O_K^\times$ and $t\in \RR$ such that $\lambda_\nu\asymp_K |u|_\nu/t$ for all $\nu\mid \infty$. Taking the product over all infinite places, we deduce that
    \[
    1\asymp\prod_{\nu \mid \infty}t\max\{|x'_0|_\nu,\dots, |x'_n|_\nu\}=t^{s_K}\norm{\bm{x}'}_\infty,
    \]
    which implies that $t^{-1}\asymp\norm{\bm{x}'}_\infty^{1/s_K}$. Moreover, if we define $\bm{x}=u^{-1}\bm{x}'$, then we have 
    \[
    \max\{|x_0|_\nu,\dots, |x_n|_\nu\} \asymp t^{-1}\asymp\norm{\bm{x}}^{1/s_K}_\infty,
    \]
    since $\norm{\bm{x}}_\infty=\norm{\bm{x}'}_\infty$. As $\bm{x}$ is also a representative for $\bm{y}$ in $\PP^n(K)$, the result follows.
\end{proof}
Let $c_1,c_2$ be the constants from Lemma~\ref{Le: GoodReps}. We then define
\[
Z_n\coloneqq \{\bm{x}\in Z_n'\colon c_1\norm{\bm{x}}\leq \norm{\bm{x}}_\infty^{1/s_K}\leq c_2 \norm{\bm{x}}\},
\]
so that in particular every element in $\PP^n(K)$ has a representative in $Z_n$. 

For $x\in K$, we define the affine height 
\[
h(x)\coloneqq H(1,x). 
\]
\begin{lemma}\label{Le: NumberUnits}
    Let $K$ be a global field. Then 
    \[
    \{u\in \O_K^\times\colon h(u)\leq B\} \ll_{d_K} (\log B)^{s_K}.
    \]
\end{lemma}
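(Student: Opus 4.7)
The idea is standard: combine the fact that units have trivial absolute value at every finite place with Dirichlet's unit theorem (or its function-field analogue) to reduce to counting lattice points in a ball.

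First, I would observe that for any $u \in \O_K^\times$ and any non-archimedean place $\nu$, one has $|u|_\nu = 1$, since $v_\nu(u) = 0$. Consequently
\[
h(u) = H(1,u) = \prod_{\nu \in \Omega_K} \max\{1,|u|_\nu\} = \prod_{\nu \mid \infty} \max\{1,|u|_\nu\},
\]
so the hypothesis $h(u) \leq B$ is equivalent to $\sum_{\nu \mid \infty} \max\{0, \log |u|_\nu\} \leq \log B$. Combining this with the product formula $\sum_{\nu \in \Omega_K} \log |u|_\nu = 0$ (which, since $|u|_\nu=1$ at finite places, reduces to $\sum_{\nu \mid \infty} \log |u|_\nu = 0$), I obtain
\[
\sum_{\nu \mid \infty} |\log |u|_\nu| = 2 \sum_{\nu \mid \infty} \max\{0, \log |u|_\nu\} \leq 2 \log B.
\]

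Next, consider the logarithmic embedding
\[
\phi \colon \O_K^\times \to \RR^{s_K}, \qquad u \mapsto (\log|u|_\nu)_{\nu \mid \infty}.
\]
By Dirichlet's unit theorem in the number field case, and its function field analogue (see e.g.\ Rosen, Proposition 14.2), the image $\phi(\O_K^\times)$ is a lattice $\Lambda$ of rank $s_K - 1$ inside the hyperplane $\sum_{\nu} x_\nu = 0$ of $\RR^{s_K}$, and the kernel of $\phi$ is the finite torsion subgroup of $\O_K^\times$ (the roots of unity in the number field case, or $\FF_q^\times$ in the function field case), whose order is $O_K(1)$.

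Finally, by the estimate above, the image of any $u$ with $h(u) \leq B$ lies in the region
\[
\Lambda \cap \Bigl\{ \bm{x} \in \RR^{s_K} \colon \sum_\nu |x_\nu| \leq 2\log B \Bigr\},
\]
which is a box of sidelength $O(\log B)$ inside a real vector space of dimension $s_K - 1$. A standard lattice-point counting argument in $\Lambda$ (e.g.\ the Lipschitz principle) gives at most $O_K\bigl((\log B)^{s_K - 1} + 1\bigr)$ such points. Multiplying by the finite order of the torsion subgroup and bounding $(\log B)^{s_K-1}+1 \ll (\log B)^{s_K}$ for $B$ sufficiently large yields the desired bound.

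The only step requiring any care is invoking the appropriate unit theorem uniformly in characteristic; there is no substantial obstacle here, since both the number field and the function field cases are well-documented in the literature. I would simply cite both sources and state the two cases together.
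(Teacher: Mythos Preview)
Your proposal is correct and follows essentially the same approach as the paper: pass to the logarithmic embedding $\phi\colon \O_K^\times \to \RR^{s_K}$, use the unit theorem to identify the image as a rank-$(s_K-1)$ lattice with finite kernel, observe that the height bound confines $\phi(u)$ to a box of side $O(\log B)$, and count lattice points. The only cosmetic differences are that the paper outsources the number-field case to Broberg and, in the function-field case, uses the symmetry $h(u)=h(u^{-1})$ rather than the product formula to bound $|\log|u|_\nu|$; your unified treatment via the product formula is equally valid and in fact yields the slightly sharper exponent $s_K-1$ before you weaken it.
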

\begin{proof}
   If $K$ is a number field this is proved by Broberg \cite[Proposition 4]{broberg2001rational} and so we may assume that $K$ is a function field. Let $\FF_q$ be the field of constants of $K$. If we define
   \[
   \phi\colon \O_K^\times\to \ZZ^{s_K},\quad u\mapsto (\log_q|u|_\nu)_{\nu\mid \infty},
   \]
   then $\phi(\O_K^\times)$ is a lattice of rank $s_K-1$ inside $\ZZ^{s_K}$ and $\phi$ is a group homorphism with kernel $\FF_q^\times$. Note that for any $u\in \O_K^\times$ we have $h(u)=h(u^{-1})$, so that if $h(u)\leq B$ holds, then we must have $\max_{\nu\mid \infty}|\log_q|u|_\nu|\leq \log_q B$. It follows that the number of units in question is $O((\log B)^{s_K})$ as desired.  
   \end{proof}

\begin{lemma}\label{Le: Divisor.bound.elements}
    Let $L/K$ be an extension of degree $d$ and let $y\in \O_K\setminus\{0\}$. Then
    \[
    \#\{(y_1,y_2) \in \O_L^2\colon  y_1y_2=y \text{ and }|y_i|_\nu \leq R \text{ for all }\nu\in\Omega_{K,\infty}, i=1,2\}\ll_{d, K}(R\mathrm{N}(y))^\varepsilon,
    \]
    where $|\cdot|_\nu$ is extended uniquely to $L$.
\end{lemma}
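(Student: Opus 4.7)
The approach is the standard divisor-bound argument for $\O_L$ combined with a count of admissible units $u \in \O_L^\times$. Every pair $(y_1,y_2)\in\O_L^2$ with $y_1 y_2 = y$ induces a factorisation of ideals $\langle y_1\rangle \langle y_2\rangle = \langle y\rangle$ in $\O_L$, so I first bound the number of such ideal factorisations by the total number of ideal divisors of $y\O_L$. Since $\mathrm{N}(y\O_L) = \mathrm{N}(y)^d$, the standard Dedekind divisor bound (proved exactly as over $\ZZ$ via unique prime factorisation, using that at most $d$ prime ideals of $\O_L$ lie above any prime of $\O_K$) gives this count as $\ll_{d,K}\mathrm{N}(y)^{d\varepsilon}$, which is absorbed into $(R\mathrm{N}(y))^\varepsilon$.

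For each ideal factorisation $\mathfrak{I}_1 \mathfrak{I}_2 = y\O_L$ I may assume $\mathfrak{I}_1 = (a)$ is principal with fixed generator $a \in \O_L$, since otherwise $\mathfrak{I}_1$ cannot equal $\langle y_1\rangle$ for any $y_1 \in \O_L$. Setting $b = y/a$, every pair $(y_1, y_2)$ realising this factorisation is of the form $(ua, u^{-1}b)$ for a unique unit $u\in\O_L^\times$. The constraints $|y_i|_\nu \leq R$ at each $\nu\in\Omega_{K,\infty}$ then translate, using the chosen extension of $|\cdot|_\nu$ to each archimedean $w\mid\nu$ of $L$, into inequalities
\[
\log|b|_w - C\log R \;\leq\; \log|u|_w \;\leq\; C\log R - \log|a|_w
\]
at each archimedean $w$ of $L$, where $C=C(d)$ absorbs the local degree factors relating $|\cdot|_\nu$ and $|\cdot|_w$.

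The remaining task is to count units in this box. Under the logarithmic embedding $\phi\colon \O_L^\times \to \RR^{s_L}$, $u \mapsto (\log|u|_w)_{w\mid\infty}$, Dirichlet's unit theorem realises $\phi(\O_L^\times)$ as a lattice of rank $s_L - 1$ in the hyperplane $\sum_w x_w = 0$, with kernel of size $O_K(1)$. The width of the box in coordinate $w$ is $2C\log R - \log|y|_w$, so the main technical point -- and the only step that requires real care -- is to establish a lower bound $|y|_w \gg R^{-O_{d,K}(1)}$ at each archimedean $w$. For this I use the product formula on $L$: since $y = y_1 y_2$ with each $y_i$ integral and $|y_i|_w \leq R^{O_{d}(1)}$ at all archimedean places, one has $h(y) \leq h(y_1)h(y_2) \ll R^{O_{d,K}(1)}$, and combined with $\prod_{w\mid\infty}|y|_w = \mathrm{N}(y\O_L)\geq 1$ this forces $|y|_w \gg h(y)^{-(s_L-1)} \gg R^{-O_{d,K}(1)}$ individually. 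Each side of the box is therefore $O_{d,K}(\log R)$, so the standard lattice point estimate yields $O_{d,K}((\log R)^{s_L-1}) = O_\varepsilon(R^\varepsilon)$ admissible units per ideal factorisation. Multiplying by the divisor-bound estimate from the first step gives the desired $(R\mathrm{N}(y))^\varepsilon$.
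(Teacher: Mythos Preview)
Your argument follows the same two-step strategy as the paper: bound the number of ideal factorisations of $y\O_L$ by the divisor function, then for each factorisation count the admissible units via the logarithmic embedding. The paper streamlines the second step by taking the reference pair $(y_1,y_2)$ to be an element already in the set, so that $h(u)\leq R^d/N_L(y_1)\leq R^d$ follows immediately from $N_L(y_1)\geq 1$, and then applies Lemma~\ref{Le: NumberUnits}; this bypasses your detour through a lower bound on $|y|_w$.

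One point deserves more care. Your final sentence asserts that ``the standard lattice point estimate yields $O_{d,K}((\log R)^{s_L-1})$'', but a lattice-point count in a box of side $O(\log R)$ depends on the successive minima of the unit lattice of $L$, and the constant a priori depends on $L$, not just on $d$ and $K$. Over function fields this is harmless since $\phi(\O_L^\times)\subset\ZZ^{s_L}$ forces $\lambda_1\geq 1$; over number fields you need a lower bound on the log-height of non-torsion units uniform in the degree $d_L=d\cdot d_K$ (e.g.\ via Northcott or Dobrowolski), which is true but not part of the ``standard'' estimate. The paper absorbs this issue into Lemma~\ref{Le: NumberUnits}, whose number-field case is quoted from Broberg with the implied constant depending only on the degree.
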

\begin{proof}
    Note that by the usual divisor bound, there are $O(\mathrm{N}(y)^\varepsilon)$ ideals in $\O_K$ that divide $(y)$. Moreover, if $\mathfrak{p}$ is a prime ideal of $\O_K$ and we have a factorisation $\mathfrak{p}=\mathfrak{p}_1^{e_1}\cdots \mathfrak{p}_r^{e_1}$ into prime ideals of $\O_L$, then we must have $\sum_{i=1}^r e_i\leq d$. It follows that there $O(\mathrm{N}(y)^{\varepsilon O_d(1)})=O_d(\mathrm{N}(y)^\varepsilon)$ divisors of the ideal $(y)$ in $\O_L$. 

    Now suppose that $y_i$ and $z_i$ generate the same ideal and $y_1y_2=z_1z_2$. This implies that $z_1=uy_1$ and $z_2=u^{-1}z_2$ for some unit $u\in \O_L$. Thus if $|z_i|_\nu \leq R$, we have $|u|_\nu \leq R/ |y_i|_\nu$ for $i=1,2$. Moreover, if $\omega$ is a place of $L$ lying above $\nu$, then $|u|_\omega=|u|_\nu^{d_\omega}$, where $d_\omega$ is the degree of the extension $K_\omega/K_\nu$. In particular,
    \[
    h(u)=\prod_{\omega\in \Omega_{L,\infty}}\max\{1, |u|_\omega\}\ll \prod_{\omega\in \Omega_{L,\infty}} R^{d_\omega}|y_i|_{\omega}^{-1} \ll R^{d}N_L(y_i)^{-1} \leq R^d. 
    \]
If $\cha(K)>0$ and $\FF_q$ is the field of constants of $K$, then the field of constants of $L$ is an extension of $\FF_q$ of degree at most $d$. Therefore, $s_L\leq d s_K$ and Lemma~\ref{Le: NumberUnits} implies that there are $O_d((\log R)^{d s_K})=O_d(R^\varepsilon)$ available $u$, which completes the proof.  
\end{proof}

   Suppose we are given a morphism $\phi\colon \PP^n\to \PP^m$ of the form $\phi(\bm{x})=(\phi_0(\bm{x}),\dots, \phi_m(\bm{x}))$ where $\phi_0,\dots, \phi_m\in \O_K[x_0,\dots, x_n]$ are homogeneous forms of degree $e$ without a common zero in $\overline{K}$. Then functoriality of heights implies that 
\begin{equation*}
H(\phi(\bm{x}))\asymp H(\bm{x})^e,
\end{equation*}
where the implied constant depends on $n, m, K$ and $\phi$. 

Given $f\in K[x_1,\dots,x_n]$ we define $\norm{f}=\norm{\bm{f}}$, where $\bm{f}$ is the coefficient vector of $f$.  Similarly, we can extend it to vectors $F=(f_1,\dots, f_r)\in K[x_1,\dots, x_n]^r$ by setting $\norm{F}=\max \norm{f_i}$. We require a version of the functoriality of heights for morphisms $\PP^1\to  \PP^n$ with an explicit dependence on the height of the morphism.
\begin{lemma}\label{Le: FunctorialityHeights}
    Let $\psi\colon \PP^1\to \PP^n$ be a morphism over $K$ given by 
    \[
    \psi([u,v])= [\psi_0(u,v), \hdots , \psi_n(u,v)],
    \]
    where $\psi_i\in\O_K[u,v]$ is homogeneous of degree $d$ for $i=0,\dots, n$ and the $\psi_i$ do not share a common non-constant factor. Then 
    \[
    \norm{\psi}^{-B}H([u,  v])^d\ll_{K, n, d} H(\psi([u,v]))\ll_{K, n, d}\norm{\psi}^BH([u,v])^d,
    \]
    where the implied constants and $B$ only depend on $K, n$ and $d$.  
\end{lemma}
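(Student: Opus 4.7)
The plan is to prove the upper and lower bounds separately, with the former being routine and the latter requiring a Nullstellensatz-style identity with controlled heights.

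For the upper bound, at each place $\nu$ one applies the triangle inequality to obtain $|\psi_i(u,v)|_\nu \leq c_\nu \max_j |a_{ij}|_\nu \max(|u|_\nu, |v|_\nu)^d$, where $c_\nu = 1$ at non-archimedean places and $c_\nu = O_d(1)$ at archimedean ones. Taking the maximum over $i$ and then the product over all $\nu$, and using that $|a_{ij}|_\nu \leq 1$ at non-archimedean places (since $a_{ij} \in \O_K$) while $\max_{i,j}|a_{ij}|_\nu \leq \norm{\psi}$ at every infinite place, one arrives at $H(\psi([u,v])) \ll_{K,n,d} \norm{\psi}^{s_K} H([u,v])^d$, which is of the required shape.

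The lower bound is the heart of the matter and exploits the hypothesis that the $\psi_i$ share no common non-constant factor. Since $K$ is infinite, a Zariski density argument produces constants $\lambda_i, \mu_i \in \O_K$, bounded in terms of $n, d$ alone, such that the binary forms $\Lambda = \sum_i \lambda_i \psi_i$ and $M = \sum_i \mu_i \psi_i$ are coprime of degree exactly $d$. Classical resultant theory then provides homogeneous $a, b, c, e \in \O_K[u,v]$ of degree $d-1$ with
\[
a\Lambda + bM = R u^{2d-1}, \qquad c\Lambda + eM = R v^{2d-1},
\]
where $R = \mathrm{Res}(\Lambda, M) \in \O_K \setminus \{0\}$. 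Substituting the expressions for $\Lambda$ and $M$ converts these into identities of the form $Ru^{2d-1} = \sum_i A_i \psi_i$ and $Rv^{2d-1} = \sum_i B_i \psi_i$, where each $A_i, B_i \in \O_K[u,v]$ is homogeneous of degree $d-1$ in $u,v$ and whose coefficients are integer-coefficient polynomials of degree $O_{n,d}(1)$ in the coefficients of the $\psi_j$.

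For each place $\nu$, applying the ultrametric or triangle inequality to these identities and dividing by $\max(|u|_\nu, |v|_\nu)^{d-1}$ yields
\[
|R|_\nu \max(|u|_\nu, |v|_\nu)^d \leq C_\nu \bigl(\max_{i,j}|a_{ij}|_\nu\bigr)^{E} \max_i |\psi_i(u,v)|_\nu
\]
for some exponent $E = E(n,d)$, with $C_\nu = 1$ at non-archimedean places. Taking the product over all $\nu$ and invoking the product formula $\prod_\nu |R|_\nu = 1$ for $R \in K^\times$ produces $H([u,v])^d \ll_{K,n,d} \norm{\psi}^{Es_K} H(\psi([u,v]))$, giving the lower bound with $B = Es_K$. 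The main obstacle is precisely the production of this Nullstellensatz-type identity with $\O_K$-coefficients that are polynomially bounded by $\norm{\psi}$; once coprime $\Lambda, M$ are in hand, this reduces to the classical effective resultant computation on $K[u,v]$.
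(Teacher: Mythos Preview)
Your proof is correct and follows essentially the same route as the paper: produce a Nullstellensatz-type identity expressing $Cu^{e}$ and $Cv^{e}$ as $\O_K$-linear combinations of the $\psi_i$ with coefficients polynomially bounded in $\norm{\psi}$, then compare heights place by place. The paper obtains this identity by a direct ``repeated application of the Euclidean algorithm'' and works with $(u,v)\in Z_1$, separating the finite places (via the ideal $\mathrm{N}(\psi(u,v))$, which is shown to divide $(C)(\mathfrak{a}_1\cdots\mathfrak{a}_h)^e$) from the infinite ones; your detour through two coprime linear combinations $\Lambda,M$ and the explicit resultant identity makes the exponent $e=2d-1$ transparent, and your adelic use of the product formula for $R=\mathrm{Res}(\Lambda,M)$ is equivalent to the paper's splitting. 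One minor inaccuracy: the bound on the auxiliary $\lambda_i,\mu_i$ may depend on $K$ as well (in positive characteristic one needs a search set in $\O_K$ of size exceeding the degree of the resultant polynomial), but this is harmless since the lemma's implied constants are allowed to depend on $K$.
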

\begin{proof}
    The upper bound is easy: For any $(u,v)\in Z_1$, the triangle inequality implies 
    \[
    H(\psi([u, v])) =\mathrm{N}(\psi(u,v))^{-1}\prod_{\nu \mid \infty}\max\{|\psi_i(u,v)|_\nu\}\ll \prod_{\nu\mid \infty}\max\{|\psi_{ij}|_\nu\}\max\{|u|_\nu, |v|_\nu\}^d,
    \]
    from which the claim follows. 

    For the lower bound, a repeated application of the Euclidean algorithm gives 
    \[
    Cu^e=\sum_{i=0}^nf_i\psi_i(u,v) \quad\text{and}\quad Cv^e = \sum_{i=0}^n f_i'\psi_i(u,v)
    \]
    for some homogeneous forms $f_i,f_i'\in\O_K[u,v]$ of degree $e-d$ and $C\in \O_K$. Moreover, an inspection of the algorithm reveals that $e$ only depends on $n$; that the $f_i$ and $f_i'$ can be taken to satisfy $\norm{f_i}, \norm{f_i'}=O(\norm{\psi}^B)$ and $\norm{C}=O(\norm{\psi}^{B'})$ for some absolute constants $B, B'>0$. It follows that if $\mathfrak{a}\mid (\psi(u,v))$ for $(u,v)\in Z_1$, then $\mathfrak{a}\mid (C)(\mathfrak{a}_1\cdots \mathfrak{a}_h)^e$, where $\mathfrak{a}_1,\dots ,\mathfrak{a}_h$ are a set of representatives for $CL_K$. Therefore, for any $\nu\mid \infty$,
    \begin{align*}
        |C|_\nu \max\{|u|_\nu^e,|v|_\nu^e\} &=\max\left\{\left|\sum f_i \psi(u,v)\right|_\nu, \left|\sum f_i'\psi_i(u,v)\right|_\nu\right\} \\
        &\ll \norm{\psi}^B\max\{|u|^{e-d}_\nu, |v|^{e-d}_\nu\}\max\{|\psi_i(u,v)|\}, 
    \end{align*}
which implies 
\begin{align*}
    H(\psi(u,v)) &=\mathrm{N}(\psi(u,v))^{-1}\prod_{\nu \mid \infty}\max\{|\psi_i(u,v)|_\nu\}\\
    &\gg \mathrm{N}(C)^{-1}\norm{\psi}^{-B}H([u,v])^d\\
    &\gg \norm{\psi}^{-B''}H([u,v])^d
\end{align*}
for some constant $B''>0$.
\end{proof}

\subsection{Rational points on varieties}
For $\bm{c}\in\PP^n(K)$, we shall write $H_{\bm{c}}\subset \PP^n$ for the hyperplane defined by $\bm{c}\cdot \bm{x}=0$. We then have the following result, which follows for number fields from work of Bombieri and Vaaler \cite{bombieri_vaaler} and for function fields from work of Thunder \cite{ThunderSiegel}.
\begin{lemma}[Siegel's Lemma] \label{lem.siegel}
Let $K$ be a global field  and $\bm{x}\in \PP^n(K)$ with $H(\bm{x})\leq R$. Then there exists $\bm{c}\in \PP^n(K)$ with $H(\bm{c}) \ll_{n,k} R^{1/n}$ and $\bm{x}\in H_{\bm{c}}$.
\end{lemma}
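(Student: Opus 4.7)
The plan is to reduce the statement to an application of the absolute Siegel's lemma, cited from Bombieri--Vaaler in the number field case and Thunder in the function field case.

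First, I would lift $\bm{x}$ to a representative $(x_0,\ldots,x_n)\in Z_n$ using Lemma~\ref{Le: GoodReps}, so that $\norm{(x_0,\ldots,x_n)}_\infty \asymp H(\bm{x}) \leq R$. The condition $\bm{x}\in H_{\bm{c}}$ becomes the linear constraint $\sum_{i=0}^n c_i x_i = 0$, and the set
\[
V=\{\bm{c}\in K^{n+1}\colon c_0 x_0+\cdots+c_n x_n =0\}
\]
is an $n$-dimensional $K$-linear subspace of $K^{n+1}$. Producing a valid projective point $\bm{c}\in\PP^n(K)$ with $\bm{x}\in H_{\bm{c}}$ amounts to producing a single nonzero vector in $V$ of controlled height.

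Next, I would identify the Schmidt (Grassmannian) height $H(V)$ with $H(\bm{x})$. This is the standard duality between the height of an $n$-dimensional subspace of $K^{n+1}$ and the height of the line that annihilates it: the Pl\"ucker coordinates of $V$ are, up to sign, given by the coordinates of $\bm{x}$ itself. Hence $H(V)\leq R$.

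Finally, I would apply the absolute Siegel's lemma to $V$ to obtain a basis $\bm{v}_1,\ldots,\bm{v}_n$ of $V$ with
\[
\prod_{i=1}^n H(\bm{v}_i) \ll_{n,k} H(V) \leq R,
\]
so that in particular some $\bm{v}_i$ satisfies $H(\bm{v}_i) \ll_{n,k} R^{1/n}$. Its projective class in $\PP^n(K)$ furnishes the required $\bm{c}$.

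The only step that does genuine work is the invocation of the absolute Siegel's lemma, which the paper quotes as a black box from \cite{bombieri_vaaler} and \cite{ThunderSiegel}. Once this is available, the identification $H(V)=H(\bm{x})$ makes the deduction essentially formal, so I do not anticipate a substantial obstacle; the only minor care needed is that the Bombieri--Vaaler/Thunder statement applies uniformly across number and function fields so that one gets a constant depending solely on $n$ and $k$.
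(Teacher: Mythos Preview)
Your proposal is correct and aligns with the paper's own treatment: the paper does not give an independent proof but simply records the lemma as a consequence of Bombieri--Vaaler \cite{bombieri_vaaler} in the number field case and Thunder \cite{ThunderSiegel} in the function field case. Your write-up spells out explicitly how the deduction goes (via the duality $H(V)=H(\bm{x})$ for the annihilator hyperplane and the absolute Siegel basis bound $\prod H(\bm{v}_i)\ll H(V)$), which is exactly the intended route and more detailed than what the paper itself provides.
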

The next result \cite[Theorem 1.8]{paredes2021uniform} will be helpful when dealing with singular hyperplane sections. 
\begin{lemma}\label{lem.irreducible_curve_uniform}
Let $C\subset \PP^n$ be an irreducible curve of degree $e\geq 1$. Then 
\[
\#\{\bm{x}\in C(K) \colon H(\bm{x})\leq  B\}\ll_{e,n,K} B^{2/e}.
\]
\end{lemma}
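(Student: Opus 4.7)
The plan is to use the global determinant method of Salberger, adapted to arbitrary global fields (this is the content of Paredes--Sasyk's work). The target exponent $2/e$ is optimal (matching the growth of the Hilbert function of a degree $e$ curve), so an $\varepsilon$-free bound requires exploiting congruences at many primes simultaneously.

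First I would choose representatives in $Z_n$ for the points of height $\leq B$, so that the count reduces to one for integral vectors $\bm{x}\in \O_K^{n+1}$ whose archimedean size is bounded by $\ll B^{1/s_K}$. For each prime $\mathfrak{p}\subset \O_K$ of sufficiently large norm at which $C$ has good reduction, the points cluster into residue classes modulo $\mathfrak{p}$, each typically containing $\ll N/\mathrm{N}(\mathfrak{p})$ points (where $N$ is the total count). Inside a single residue class specialising to a smooth point $\xi \in C(\O_K/\mathfrak{p})$, the affine coordinates admit a $\mathfrak{p}$-adic analytic parameterisation by a local uniformiser of the one-dimensional regular local ring $\O_{C,\xi,\mathfrak{p}}$.

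Next I would set up the auxiliary hypersurface construction. Fix an integer $D$ to be chosen later. By the Hilbert polynomial of $C$ (equivalently Riemann--Roch on the normalisation), the space of polynomials of degree $\leq D$ restricted to $C$ has dimension $eD + O_{e,n}(1)$. For a residue class $\xi$ containing $m_\xi$ of our points, form the matrix $M_\xi$ whose rows are indexed by these points and whose columns are indexed by a basis of monomials modulo the ideal of $C$, with entries the values of these monomials at the points. The local analytic expansion described above forces a large $\mathfrak{p}$-adic valuation on every maximal minor of $M_\xi$, bounded below by a triangular-number sum in $m_\xi$. Aggregating this divisibility over all residue classes, then over all primes $\mathfrak{p}$ in a dyadic range (this is the genuinely \emph{global} step due to Salberger), and comparing against the Hadamard upper bound coming from the archimedean norms of the entries, one obtains the following dichotomy: either $N \ll B^{2/e}$ or some maximal minor vanishes, producing a non-zero polynomial $F$ of degree $D$ which vanishes on all the points but not identically on $C$.

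Bezout's theorem then bounds the number of points on $C\cap \{F=0\}$ by $eD$. Choosing $D$ to be a suitable constant depending on $e$ and $n$ and iterating the construction $O_{e,n}(1)$ times gives the claimed bound $\ll_{e,n,K}B^{2/e}$. The main obstacle, and the reason we simply cite \cite{paredes2021uniform}, is performing the global determinant step uniformly over an arbitrary global field $K$: one must count primes of given norm in $\O_K$, handle the class group through the representatives $\mathfrak{a}_1,\dots,\mathfrak{a}_h$, control the archimedean contribution using Lemma~\ref{Le: GoodReps}, and make the Hilbert/Riemann--Roch input effective in a way that does not lose an $\varepsilon$. These ingredients are all furnished by the framework developed in \cite{paredes2021uniform}.
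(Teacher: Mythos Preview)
Your proposal is correct and matches the paper's approach: the paper gives no proof at all and simply cites the result as \cite[Theorem~1.8]{paredes2021uniform}, which is exactly the reference you invoke after sketching the underlying global determinant method. Your outline of Salberger's method as extended by Paredes--Sasyk is accurate and more informative than the bare citation in the paper.
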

We will need strong upper bounds for the number of rational points on irreducible varieties of large degree to control the number of degenerate hyperplane sections. Let $X\subset \PP^n$ be an irreducible variety of degree $d$. Then the estimate 
\begin{equation}\label{Eq:UnifVariety}
\#\{\bm{x}\in X(K)\colon \PP^n(K)\}\ll_{d,n,K}B^{\dim(X)+1}
\end{equation}
is proved via a simple inductive process of taking hyperplane sections, see for example Lemma~5.6 of \cite{BrowningBookCircle} for the case $K=\QQ$. We omit a proof here. Using the determinant method, one can do significantly better. The principal input is the following result \cite[Theorem 1.11]{paredes2021uniform} building on work of Salberger \cite{TheSalberger} and Heath-Brown \cite{HeathBrownCurvesSurfaces}, that demonstrates the truth of the dimension growth conjecture for all global fields. 
\begin{prop}\label{Prop.DimGrowth}
Let $X\subset \PP^n$ be an integral variety of degree $d\geq 5$. Then 
\[
\#\{\bm{x}\in X(K)\colon H(\bm{x})\leq B\} \ll_{d, n,K}B^{\dim(X)}.
\]
\end{prop}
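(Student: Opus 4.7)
The plan is to induct on $k = \dim(X)$, driving the inductive step by the global determinant method of Salberger, adapted uniformly to the global field $K$ as in the work of Paredes--Sasyk.

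The base case $k=1$ is immediate: when $X$ is an integral curve of degree $d \geq 5$, Lemma~\ref{lem.irreducible_curve_uniform} gives $\#\{\bm{x}\in X(K) \colon H(\bm{x}) \leq B\} \ll_{d,n,K} B^{2/d} \leq B^{2/5} \ll B$, which is even stronger than what is claimed. For $k \geq 2$, assume the result for all integral varieties of smaller dimension and degree at least five. Writing $S = \{\bm{x} \in X(K) \colon H(\bm{x}) \leq B\}$, the heart of the argument is to cover $S$ by a bounded collection, of size $O_{d,n,K}(1)$, of hypersurface sections $X \cap \{F_j = 0\}$ with $\deg F_j = O_{d,n}(1)$ and $X \not\subset \{F_j=0\}$. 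To produce the $F_j$, I would choose a family of finite places $\mathfrak{p}$ of $\O_K$ with $\mathrm{N}(\mathfrak{p})$ in a suitably calibrated range, partition $S$ according to its reduction modulo $\mathfrak{p}$, and form, for each residue class, the matrix whose rows record the values of a basis of $H^0(X,\O_X(e))$ at the points of the class, where $e = O_{d,n}(1)$ is chosen so that the Hilbert function $H_X(e)$ comfortably exceeds the expected number of residue classes. Salberger-type $\mathfrak{p}$-adic singular-value estimates then force rank defects in these matrices, and combining the resulting divisibility relations across the chosen primes $\mathfrak{p}$ yields the desired global polynomials of bounded degree vanishing on all of $S$.

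By Bezout each intersection $X \cap \{F_j = 0\}$ has dimension $k-1$ and degree at most $O_{d,n,K}(1)$, so its integral components of degree $\geq 5$ contribute $O_{d,n,K}(B^{k-1})$ rational points each by the induction hypothesis, which is well within budget. The main obstacle lies in components of low degree, and especially linear subvarieties of $X$, since a single $(k-1)$-plane contained in $X$ already carries $\gg B^{k}$ rational points of height $\leq B$, which would be lethal if such subspaces were abundant. Here the hypothesis $d \geq 5$ is crucial: a dimension count in the Grassmannian, using the $\binom{r+d}{d}$ coefficient conditions imposed on a degree-$d$ form by containment of an $r$-plane, forces the Fano scheme of $r$-planes on $X$ to have dimension strictly less than its expected dimension, so that the union of all linear subvarieties of $X$ of fixed positive dimension is itself a proper closed subvariety of $X$. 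Splitting the count over components of each $X \cap \{F_j = 0\}$ into linear and non-linear pieces, treating the linear pieces via this Grassmannian bound, and treating the remaining non-linear low-degree pieces (automatically of degree at least two) via the uniform bound \eqref{Eq:UnifVariety} together with Lemma~\ref{lem.irreducible_curve_uniform} at the curve level, one obtains an aggregate contribution of $O_{d,n,K}(B^{k})$, which closes the induction.
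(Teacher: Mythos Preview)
The paper does not prove this proposition at all: it is quoted verbatim as \cite[Theorem~1.11]{paredes2021uniform}, which carries Salberger's global determinant method over to arbitrary global fields. So there is no in-house proof to compare against; the question is whether your sketch could stand in for the cited one.

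It cannot, because of a concrete error in the first paragraph. You claim that the determinant method covers the set $S$ of rational points of height $\leq B$ by $O_{d,n,K}(1)$ hypersurface sections of bounded degree. This is false: the method partitions $S$ by residue classes modulo a prime (or a product of primes) $\mathfrak{p}$ and produces one auxiliary form per class, so the number of forms is of order $\#X(\FF_\mathfrak{p})\asymp \mathrm{N}(\mathfrak{p})^{k}$, and the primes must be taken with $\mathrm{N}(\mathfrak{p})$ a positive power of $B$ for the rank defect to occur. Even in Salberger's global refinement, the number of auxiliary hypersurfaces is a positive power of $B$, never $O(1)$. A simple sanity check already rules out your claim: on a smooth quintic surface in $\PP^3$ containing no lines, every curve of bounded degree carries $\ll B$ rational points of height $\leq B$, so $O(1)$ such curves cannot absorb the $\asymp B^{2}$ points one expects on the surface.

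Once the number of auxiliary hypersurfaces depends on $B$, the entire difficulty of the theorem resurfaces in the inductive bookkeeping: one must show that (number of hypersurfaces) $\times$ (points per component) stays $\ll B^{k}$, and it is precisely here that the threshold $d\geq 5$ enters, through a delicate interplay between the exponent produced by the global determinant method and the separate treatment of linear subvarieties. Your Fano-scheme paragraph gestures at the latter but does not interface with the former, and the ``dimension strictly less than expected'' assertion is not the right statement (for large $d$ the expected dimension is negative, yet special $X$ can still carry positive-dimensional families of lines). In short, the outline has the right ingredients listed but the quantitative claim that drives the induction is wrong; the actual argument in \cite{TheSalberger} and \cite{paredes2021uniform} is substantially more intricate than this sketch suggests.
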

We shall also make us of the following result about representations by binary quadratic forms.
\begin{lemma}\label{Le: QuadRepI}
    Let $Q\in \O_K[s,t]$ be a binary quadratic form without repeated roots and let $\bm{R}\in \RR_{>1}^{s_K}$. If $\cha(K)\neq 2$, then for any $\gamma \in \O_K$ we have
    \[
    r_Q(\gamma,\bm{R})\coloneqq \#\{(s,t)\in \O_K^2\colon |(s,t)|_\nu\leq R_v\text{ for all }\nu\mid\infty, Q(s,t)=\gamma\}\ll_K (|\bm{R}|\norm{Q}\mathrm{N}(\gamma))^\varepsilon.
    \]
\end{lemma}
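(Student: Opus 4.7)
The plan is to factor $Q$ over an extension $L/K$ of degree at most two, convert the equation $Q(s,t)=\gamma$ into a divisor problem in $\O_L$, and invoke Lemma~\ref{Le: Divisor.bound.elements}. Since $\cha(K)\neq 2$ and $\disc(Q)\neq 0$, after possibly swapping $s$ and $t$ we may assume that the $s^2$-coefficient $a$ of $Q$ is non-zero. Then $Q(s,t)=a(s-\alpha_1 t)(s-\alpha_2 t)$ with $\alpha_1\neq \alpha_2$ in the splitting field $L$ of $Q(1,T)\in K[T]$, so $[L:K]\leq 2$. Standard bounds on roots in terms of coefficients give $|\alpha_i|_\omega \ll \norm{Q}^{O_K(1)}$ at every infinite place $\omega$ of $L$.

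Assume first that $\gamma\neq 0$. To a pair $(s,t)$ counted by $r_Q(\gamma,\bm{R})$, associate $\beta_i := a(s-\alpha_i t)\in \O_L$; then $\beta_1\beta_2=a\gamma\in \O_K\setminus\{0\}$, and the bounds on $\alpha_i$ combined with $|(s,t)|_\nu\leq R_\nu$ yield $|\beta_i|_\nu \ll \norm{Q}^{O_K(1)}R_\nu$ at each $\nu\mid\infty$ of $K$ (for any fixed extension of $|\cdot|_\nu$ to $L$). Setting $R$ to be polynomial in $\norm{Q}|\bm R|$ and noting that $\mathrm{N}(a\gamma)\ll \norm{Q}^{O_K(1)}\mathrm{N}(\gamma)$, Lemma~\ref{Le: Divisor.bound.elements} applied in the extension $L/K$ bounds the number of admissible pairs $(\beta_1,\beta_2)$ by $O_K((|\bm R|\norm{Q}\mathrm{N}(\gamma))^\varepsilon)$. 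Since $\alpha_1\neq \alpha_2$, the linear system $s-\alpha_i t=\beta_i/a$ has non-vanishing Vandermonde determinant, so each pair $(\beta_1,\beta_2)$ determines at most one $(s,t)\in K^2$. Summing over factorisations yields the desired bound.

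The case $\gamma=0$ is disposed of separately: if $Q$ is $K$-irreducible then $\alpha_1\notin K$ forces $(s,t)=(0,0)$, while if $Q=aL_1L_2$ splits over $K$ the solution set is contained in the two lines $L_1=0$ and $L_2=0$, where a direct parametrisation produces the required estimate (and in fact the lemma is only applied with $\gamma\neq 0$ in the sequel).

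The principal obstacle I foresee lies in the book-keeping when $L\neq K$: one must verify the uniform estimate $|\alpha_i|_\omega\ll \norm{Q}^{O_K(1)}$ at infinite places of the (possibly non-trivial) extension, check that $\beta_i$ actually lies in $\O_L$ once the leading coefficient $a$ is absorbed, and confirm that the uniform-box version of Lemma~\ref{Le: Divisor.bound.elements} is strong enough when the box sides $R_\nu$ differ across infinite places—this is handled by bounding $\max_\nu R_\nu$ by $|\bm R|$, which is harmless inside the $\varepsilon$-power. All remaining steps are formal.
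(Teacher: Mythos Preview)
Your argument is correct for $\gamma\neq 0$ and follows essentially the same route as the paper: both reduce $Q(s,t)=\gamma$ to a factorisation in a quadratic extension and then invoke Lemma~\ref{Le: Divisor.bound.elements}. The only cosmetic difference is that the paper first diagonalises $Q$ (completing the square) to reach $x^2+dy^2=\gamma'$ before factoring over $K(\sqrt{d})$, whereas you factor $Q$ directly over its splitting field; your variant is slightly more streamlined but the key idea is identical. One small correction: your treatment of $\gamma=0$ when $Q$ splits over $K$ does not give the stated bound, since each line $L_i=0$ carries $\asymp |\bm{R}|$ integral points in the box, not $O(|\bm{R}|^\varepsilon)$. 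As you yourself observe, the lemma is only ever applied with $\gamma\neq 0$ (and the paper's proof likewise tacitly assumes this), so the statement should be read with that proviso or with the convention $\mathrm{N}(0)=+\infty$.
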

\begin{proof}
As $\cha(K)\neq 2$, we can always find $P\in \GL_2(K)$ such that $Q(P(s,t))$ is diagonal. Moreover, it is clear that $P$ can be chosen in such a way that its entries $p_{ij}$ satisfy ${\norm{p_{ij}}\ll \norm{Q}^B}$. Setting $(x,y)=P(s,t)$, the equation then becomes $ax^2+by^2=\gamma$, where $\norm{(a,b)}\ll \norm{Q}^2$ and $|(x,y)|_\nu \ll \norm{Q}^B R_\nu$ for all $\nu\mid\infty$. Multiplying both sides by a suitable element in $\O_K$, we may assume that $a$ is a square in $\O_K$ and $a,b \in \O_K$, so that after applying another change of variables, it transpires that 
\[
r_Q(k,\bm{R})\leq \#\{(x,y)\in \O_K^2\colon |(x,y)|_\nu\ll \norm{Q}^BR_\nu, \, x^2+dy^2=\gamma'\},
\]
where $\norm{d}\ll \norm{Q}^B$ and $|\gamma'|_\nu\ll \norm{Q}^B|\gamma|_\nu$ for all $\nu\mid\infty$. Let $L=K(\sqrt{d})$ and note that in $L$ we have the factorisation of integral ideals 
\[
(x-\sqrt{d}y)(x+\sqrt{d}y)=(\gamma').
\]
Let $z_1=x-\sqrt{d}y$ and $z_2=x+\sqrt{d}y$. Since $Q$ is square-free, the assignment $(x,y)\mapsto (z_1,z_2)$ is injective. Moreover, if we also denote by $|\cdot |_\nu$ the extension of $|\cdot |_\nu$ to $L$, then we have 
\[
|z_i|_\nu \ll |\sqrt{d}|_\nu \max\{|x|_\nu, |y|_\nu\}\ll \norm{Q}^B R_\nu \leq \norm{Q}^B |\bm{R}|
\]
for $i=1,2$. Therefore, Lemma~\ref{Le: Divisor.bound.elements} implies that the number of available $(z_1,z_2)\in \O_L$ is $O((\norm{Q}^B |\bm{R}|\mathrm{N}(\gamma'))^\varepsilon)=O((\norm{Q}|\bm{R}|\mathrm{N}(\gamma))^\varepsilon))$ as desired.
\end{proof}
\begin{corollary}\label{Cor: QuadRepII}
    Let $Q\in\O_K[s,t]$ be a square-free binary quadratic form. Suppose we are given $\bm{R}\in \RR_{>1}^{s_K}$ and $S>0$. If $\mathfrak{a}\subset \O_K$ is an ideal, then 
    \[
    \#\{(s,t)\in Z_1\colon |(s,t)|_\nu\leq R_\nu, \norm{Q(s,t)}_\infty \leq S, Q(s,t)\in \mathfrak{a}\}\ll_K \max\left\{1,\frac{|\bm{R}|}{\mathrm{N}(\mathfrak{a})}\right\}(|\bm{R}|S\norm{Q})^{\varepsilon}.
    \]
\end{corollary}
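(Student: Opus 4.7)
The plan is to partition the count according to the value $\gamma := Q(s,t)$ and invoke Lemma~\ref{Le: QuadRepI} to control the number of pre-images of each such $\gamma$. First, I would dispose of the trivial contribution from $\gamma = 0$: since $Q$ is square-free, the locus $Q = 0$ consists of at most two lines through the origin in $\A^2$, and the balancing inequality defining $Z_1$ forces the number of such pairs in the prescribed box to be $O_K(1)$, which accounts for the $1$ appearing in the $\max\{1,\cdot\}$.

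For $\gamma \neq 0$, any admissible $\gamma$ lies in $\mathfrak{a} \setminus \{0\}$, so $\mathrm{N}((\gamma)) = \norm{\gamma}_\infty$ satisfies $\mathrm{N}(\mathfrak{a}) \leq \mathrm{N}((\gamma)) \leq S$; in particular, when $\mathrm{N}(\mathfrak{a}) > S$ no such $\gamma$ exists and the problem reduces to the $\gamma = 0$ contribution. Otherwise, Lemma~\ref{Le: QuadRepI} yields
\[
r_Q(\gamma, \bm{R}) \ll_K (|\bm{R}|\norm{Q}\mathrm{N}(\gamma))^\varepsilon \ll_K (|\bm{R}|\norm{Q}S)^\varepsilon,
\]
uniformly in $\gamma$, so it remains to count the admissible values of $\gamma$.

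The admissible $\gamma \in \mathfrak{a}$ satisfy the individual bounds $|\gamma|_\nu \ll \norm{Q} R_\nu^2$ at each $\nu \mid \infty$ (forced by $(s,t)$ lying in the given box) together with the aggregate bound $\prod_{\nu \mid \infty} |\gamma|_\nu \leq S$. I would perform a dyadic decomposition, setting $|\gamma|_\nu \sim U_\nu$ at each archimedean place. For each valid tuple $(U_\nu)$ Lemma~\ref{Le: Number.OK.Points} bounds the number of $\gamma \in \mathfrak{a}$ in the box by $\ll_K \max\{1, \prod_\nu U_\nu/\mathrm{N}(\mathfrak{a})\}$, and the constraints on $(U_\nu)$ ensure this per-box contribution is $\ll_K \max\{1, |\bm{R}|/\mathrm{N}(\mathfrak{a})\}$ in the relevant regime. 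The number of valid dyadic configurations is polylogarithmic in $|\bm{R}| S \norm{Q}$ and can be absorbed into the final $(|\bm{R}| S \norm{Q})^\varepsilon$ factor, and combining with the bound on $r_Q(\gamma,\bm{R})$ gives the claimed estimate.

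The hard part will be step three: a direct application of Lemma~\ref{Le: Number.OK.Points} using only the individual box bounds $|\gamma|_\nu \ll \norm{Q} R_\nu^2$ produces the much weaker estimate $\norm{Q}^{s_K}|\bm{R}|^2/\mathrm{N}(\mathfrak{a})$. To secure the needed saving one must genuinely exploit the aggregate product constraint $\norm{\gamma}_\infty \leq S$ through the dyadic decomposition, keeping careful track of the interplay between the $\O_K$-lattice structure of $\mathfrak{a}$, the factorisation $(\gamma) = \mathfrak{a}\mathfrak{b}$, and the scales $U_\nu$ across all archimedean places.
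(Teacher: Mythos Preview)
Your approach is essentially the same as the paper's: handle $Q(s,t)=0$ separately, partition according to $\gamma=Q(s,t)$, bound the multiplicity via Lemma~\ref{Le: QuadRepI}, and count the admissible $\gamma\in\mathfrak{a}$ by a dyadic decomposition of the infinite absolute values together with Lemma~\ref{Le: Number.OK.Points}. The only cosmetic difference is that the paper fixes one place $\omega\mid\infty$ and decomposes only at the remaining $s_K-1$ places, letting the constraint $\norm{\gamma}_\infty\le S$ force $|\gamma|_\omega\ll S\prod_{\nu\neq\omega} w_\nu^{-1}$; this saves one layer of bookkeeping but is not materially different from your proposal.

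Your concern about the ``hard part'' is misplaced. Once you impose the product constraint $\prod_{\nu\mid\infty} U_\nu \le S$, Lemma~\ref{Le: Number.OK.Points} gives directly
\[
\#\{\gamma\in\mathfrak{a}:|\gamma|_\nu\le U_\nu\}\ll_K \max\Bigl\{1,\frac{\prod_\nu U_\nu}{\mathrm{N}(\mathfrak{a})}\Bigr\}\le \max\Bigl\{1,\frac{S}{\mathrm{N}(\mathfrak{a})}\Bigr\},
\]
which is exactly what the paper's proof produces. In fact the paper's displayed bound in the proof reads $\max\{1,S/\mathrm{N}(\mathfrak{a})\}$, not $\max\{1,|\bm{R}|/\mathrm{N}(\mathfrak{a})\}$ as in the statement; the applications of the corollary later in the paper also use the $S/\mathrm{N}(\mathfrak{a})$ form. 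So there is no genuine difficulty in step three beyond the dyadic decomposition you already outlined, and no need to invoke any factorisation $(\gamma)=\mathfrak{a}\mathfrak{b}$ or interplay between scales beyond the single inequality $\prod_\nu U_\nu\le S$.
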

\begin{proof}
There are $O(1)$ possible $(s,t)\in Z_1$ for which $Q(s,t)=0$, and so it suffices to bound the contribution from those with $Q(s,t)\neq 0$. Fix a place $\omega \mid \infty$ of $K$ and let us split $|Q(s,t)|_\nu$ into dyadic intervals for $\nu\neq \omega$, say $|Q(s,t)|_\nu \sim w_\nu$. The quantity we want to estimate is then at most
\[
\sum_{w_\nu}\#\Big\{(s,t)\in\O_K^2\colon |(s,t)|\leq R_\nu, Q(s,t)\in \mathfrak{a}, |Q(s,t)|_\omega \ll S\prod_{\substack{\nu \mid \infty \\ \nu\neq \omega}} w_\nu^{-1}, |Q(s,t)|_\nu \leq w_\nu\Big\}.
\]
In the notation of Lemma~\ref{Le: QuadRepI}, each term in the sum is at most 
\begin{align*}
\sum_{\substack{\gamma\in \mathfrak{a}}}r_Q(\gamma, \bm{R})&\ll \sum_{\substack{\gamma\in \mathfrak{a}}}(\norm{Q}\mathrm{N}(\gamma)|\bm{R}|  )^\varepsilon\\
&\ll (\norm{Q}S|\bm{R}| )^\varepsilon\max\left\{1, \frac{S}{\mathrm{N}(\mathfrak{a})}\right\}
\end{align*}
where the sum over $\gamma$ runs over all $\gamma\in \mathfrak{a}$ such that $|\gamma|_\nu \leq {w_\nu}$ for $\nu\neq \omega$ and $|\gamma|_\omega \ll S\prod w_\nu^{-1}$ and we successively applied Lemmas~\ref{Le: QuadRepI} and \ref{Le: Number.OK.Points}. We clearly have $|Q(s,t)|_\nu \ll \norm{Q}R_\nu^2$ and hence also $|Q(s,t)|_\nu \gg \norm{Q}^{-s_K-1}|\bm{R}|^{-2(s_K+1)}$, so that there are $O((\norm{Q}|\bm{R}|)^\varepsilon)$ possibilities for $w_\nu$, which gives the result.
\end{proof}
We also need the following easy generalisation of a result due to Broberg \cite[Lemma 9]{broberg2001rational}.
\begin{lemma}\label{Le: Polysmall}
    Let $G\in K[x_1,\dots, x_n]$ be a polynomial of degree $d$ and $R, S\geq 1$. Then 
    \[
    M(G,R,S)\coloneqq \#\{\bm{x}\in \O_K^n\colon \norm{\bm{x}}\leq R^{1/s_K}, \mathrm{N}(G(\bm{x}))\leq S\}\ll_G R^{n-1+\varepsilon}S^{1/d}.
    \]
\end{lemma}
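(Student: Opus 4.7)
The plan follows Broberg's strategy~\cite{broberg2001rational}: fix all but one variable to reduce to a one-variable count, and treat the inner sum by a dyadic decomposition of the archimedean absolute values once the polynomial has been put in a convenient shape.

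First I would put $G$ into such a shape. Since the degree-$d$ homogeneous part $G_d$ of $G$ is non-zero and $\O_K$ is infinite, a standard polynomial identity argument yields $\beta_2, \ldots, \beta_n \in \O_K$ such that $c_d \coloneqq G_d(1, \beta_2, \ldots, \beta_n) \neq 0$. The $\O_K$-invertible substitution $x_i \mapsto x_i + \beta_i x_1$ for $i = 2, \ldots, n$ enlarges the box $\{|x_i|_\nu \leq R^{1/s_K}\}$ by only a $G$-dependent multiplicative constant (absorbed into $\ll_G$) and puts $G$ in the form
\[
G(x_1, \ldots, x_n) = c_d\, x_1^d + \sum_{j=0}^{d-1} h_j(x_2, \ldots, x_n)\, x_1^j,
\]
with $c_d \in \O_K \setminus \{0\}$ a non-zero constant and $h_j \in \O_K[x_2, \ldots, x_n]$ of total degree at most $d-j$.

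Next I would fix $(x_2, \ldots, x_n) \in \O_K^{n-1}$ in the (enlarged) box; Lemma~\ref{Le: Number.OK.Points} gives $\ll_G R^{n-1}$ such choices. Setting $g(x_1) = G(x_1, x_2, \ldots, x_n) \in \O_K[x_1]$, which has degree exactly $d$ with leading coefficient $c_d$, it suffices to show that
\[
\#\bigl\{ x_1 \in \O_K : |x_1|_\nu \ll_G R^{1/s_K} \text{ for all } \nu \mid \infty,\ \mathrm{N}(g(x_1)) \leq S \bigr\} \ll_G R^\varepsilon S^{1/d}.
\]
For this inner count I would dyadically decompose $|x_1|_\nu \sim T_\nu$ with $T_\nu$ ranging over dyadic values $\leq R^{1/s_K}$: there are $\ll (\log R)^{s_K} \ll R^\varepsilon$ configurations, and by Lemma~\ref{Le: Number.OK.Points} each contributes $\ll \prod_\nu T_\nu$ lattice points. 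At places $\nu$ where $T_\nu$ dominates the place-wise root radii of $g$ one has $|g(x_1)|_\nu \asymp |c_d|_\nu T_\nu^d$, while at the remaining places $|g(x_1)|_\nu$ is controlled by the fixed data. Combining these two regimes with the global constraint $\mathrm{N}(g(x_1)) \leq S$ (together with $\mathrm{N}(g(x_1)) \geq 1$ when $g(x_1) \neq 0$, from the product formula) forces $\prod_\nu T_\nu \ll_G S^{1/d}$ in any configuration that contains a valid $x_1$; summing over configurations yields $\ll_G R^\varepsilon S^{1/d}$. Multiplying by the $\ll_G R^{n-1}$ fixings completes the argument.

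The main obstacle lies in the one-variable count: the threshold at which the asymptotic $|g(x_1)|_\nu \asymp |c_d|_\nu |x_1|_\nu^d$ takes effect depends on the fixed values through the lower coefficients $h_j$, so fixings with large $h_j$ push this threshold close to $R^{1/s_K}$, and the contributions from the ``small'' archimedean places must be balanced against the ``large'' ones via $\mathrm{N}(g(x_1)) \leq S$ and the product formula. Careful bookkeeping of this dyadic sum is the only delicate point, after which the argument reduces to the routine manipulations of Broberg.
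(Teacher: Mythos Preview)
Your reduction to the one-variable problem via the shear $x_i\mapsto x_i+\beta_i x_1$ and the subsequent fixing of $(x_2,\dots,x_n)$ is exactly what the paper does. The genuine gap is in the inner one-variable count.

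The claim that any dyadic configuration $(T_\nu)_\nu$ containing a valid $x_1$ must satisfy $\prod_\nu T_\nu\ll_G S^{1/d}$ is false. Take $K=\QQ$, $G(x_1,x_2)=x_1^2-x_2^2$, fix $x_2=M$ with $M$ large, and set $S=10M$. Then $x_1=M+1$ is valid since $|g(M+1)|=2M+1\leq S$, and lies in the single dyadic box $T\sim M$; yet $T\sim M$ is not $\ll S^{1/2}\asymp M^{1/2}$, and no $G$-dependent constant can save this as $M\to\infty$. The problem is that in the critical regime where $T_\nu$ is comparable to the root radii of $g$, the value $|g(x_1)|_\nu$ ranges over many orders of magnitude across the box, so neither $\mathrm{N}(g(x_1))\leq S$ nor $\mathrm{N}(g(x_1))\geq 1$ constrains $\prod_\nu T_\nu$. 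The true count of valid $x_1$ in that box \emph{is} $\ll S^{1/d}$ (about $1+S/M$ in the example), but establishing this needs information finer than the size of $|x_1|_\nu$; it is not a matter of bookkeeping.

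The paper's fix is to decompose dyadically the distance to a root rather than $|x_1|_\nu$. After dividing by the leading coefficient one factors $g(x)=\prod_{i=1}^d(x-a_i)$ over $\overline K$, so that $\mathrm{N}(g(x))=\prod_i\prod_{\nu\mid\infty}|x-a_i|_\nu$, and by pigeonhole some index $i$ satisfies $\prod_{\nu\mid\infty}|x-a_i|_\nu\leq S^{1/d}$. Fixing a place $\omega$, one puts $|x-a_i|_\nu\sim w_\nu$ for $\nu\neq\omega$ into dyadic ranges (two-sided bounds follow from $\norm{g}\ll_G R^{d/s_K}$ together with $\mathrm{N}(g(x))\geq 1$, giving $O((R\norm{G})^\varepsilon)$ configurations) and deduces $|x-a_i|_\omega\ll S^{1/d}\prod_{\nu\neq\omega}w_\nu^{-1}$. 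The admissible $x$ then lie in a \emph{translate} of a box of volume $\ll S^{1/d}$; differencing two such $x$ lands in the untranslated box, so Lemma~\ref{Le: Number.OK.Points} bounds the count by $O(S^{1/d})$. This centring on the roots is the key idea your proposal is missing.
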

\begin{proof}
    We first give a proof for the case $n=1$, so that $G\in K[x]$. Upon dividing through by the leading coefficient, we may assume that $G$ is monic. Over an algebraic closure $\bar{K}$ of $K$ we then have the factorisation
    \[
    G(x)=\prod_{i=1}^d(x-a_i),
    \]
    for some $a_i\in \bar{K}$. As $L=K(a_1,\dots, a_n)$ is a finite degree extension of $K$, every place $\nu\in \Omega_{K,\infty}$ extends uniquely to $L$ and by abuse of notation we shall also denote it by $\nu$. We then have 
    \[
    \mathrm{N}(G(x))=\prod_{i=1}^d \prod_{\nu\mid \infty}|x-a_i|_\nu.
    \]
    In particular, if $\mathrm{N}(G(x))\leq S$, then we must have 
    \[
    \prod_{\nu\mid\infty}|x-a_i|_\nu\leq S^{1/d}
    \]
    for some $1\leq i \leq d$. Note that $\norm{G}+R^{1/s_K-d}\ll|x-a_i|_\nu\ll \norm{G}+R^{1/s_K} $, where the upper bound follows from $|x|_\nu\leq R^{1/s_K}$ and the lower bound is a consequence of the upper bound and $1\leq \mathrm{N}(G(x))$. Let us now fix a place $\omega\mid \infty$ of $K$ and put $|x-a_i|_\nu$ into dyadic intervals for $\nu\neq \omega$. We thus have that $M(G,R,S)$ is bounded above by
    \begin{align*}
     \sum_{i=1}^d \sum_{w_\nu} \#\Big\{x\in\O_K\colon |x|_\nu\leq R^{1/s_K}, |x-a_i|_\omega \ll S^{1/d}\prod_{\nu\neq\omega}w_\nu^{-1}, |x-a_i|_\nu \leq {w_\nu} \text{ for all }\nu\neq \omega \Big\},    
    \end{align*}
        where the sum over $w_\nu$ is over all tuples of  dyadic powers $(w_\nu)_{\nu\neq \omega}$ such that $|w_\nu|\ll (R+\norm{G})$ and $1\leq \prod_\nu w_\nu \leq S^{1/d}$. If $x,x'\in \O_K$ satisfy $|x-a_i|_\nu \leq {w_\nu}$ and $|x'-a_i|_\nu \leq {w_\nu}$, then $|x-x'|_\nu\ll {w_\nu}$. From Lemma~\ref{Le: Number.OK.Points} we thus obtain 
    \[
    M(G,R,S) \ll \sum_{w_\nu}S^{1/d}\ll (R\norm{G})^\varepsilon S^{1/d},
    \]
    which is satisfactory. 

    Next we assume that $n>1$. For  $\bm{a}=(a_2,\dots, a_n)\in \O_K^{n-1}$, define $\bm{x}'=(x_1, x_2+a_2x_1,\dots, x_n+a_nx_1)$. Let $G_d$ be the homogeneous degree $d$ part of $G$. We now claim that we can take $\bm{a}\in \O_{K}^{n-1}$ such that $\deg_{x_1'}(G(\bm{x}'))=d$. Indeed, the assertion is equivalent to $G_d(1,a_2,\dots, a_n)\neq 0$ and since $G_d$ is not the zero polynomial, we can always find such an $\bm{a}$ with $\norm{\bm{a}}\ll 1$. The new variables $\bm{x}'$ now satisfy $|\bm{x}'|_\nu \ll R^{1/s_K}$, where the implied constant depends on $\bm{a}$. Once $x_2',\dots, x_n'$ are fixed, then by construction $g(x_1')=G(x_1',\dots,x_n')$ is a degree $d$ polynomial and since $\norm{x'_i}\ll R^{1/s_K}$ for $i=2,\dots, n$, we have $\norm{g}\ll R^{d/s_K}$. In particular, from the case $n=1$ and Lemma~\ref{Le: Number.OK.Points}, it follows that 
    \[
    M(G,R,S)\ll \sum_{\substack{x'_2,\dots, x'_n\in \O_K\\ |x'_i|_\nu\ll R^{1/s_K}}}\#\{x'_1\in \O_K\colon \mathrm{N}(g(x'_1))\leq S, |x'_1|_\nu \ll R^{1/s_K}\}\ll R^{n-1+\varepsilon}S^{1/d}.
    \]
\end{proof}
For separable binary forms we can prove a very similar result, which we will require.
\begin{lemma} \label{lem.binary_form_separable_bound}
    Let $f \in K[s,t]$ be a separable binary form of degree $d$ and let $R, S \geq 1$. Then we have
    \[
    M(f,R,S) = \#\left\{ (s,t) \in Z_1 \colon \norm{(s,t)}_\infty \asymp R, \, \mathrm{N}(f(s,t)) \leq S \right\} \ll_f R^{1+\varepsilon}\left(1+\frac{S}{R^{d-1}}\right)
    \]
\end{lemma}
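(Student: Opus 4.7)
The proof refines that of Lemma~\ref{Le: Polysmall} by exploiting separability. Over a suitable finite extension $L/K$ we factor $f(s,t)=c\prod_{i=1}^d(\beta_is-\alpha_it)$ with $c,\alpha_i,\beta_i\in\O_L$ and the $d$ linear factors pairwise non-proportional. Pairs with $f(s,t)=0$ correspond to at most $d$ points of $\PP^1(K)$, each contributing $O((\log R)^{s_K})=O(R^\varepsilon)$ representatives in $Z_1$ by Lemma~\ref{Le: NumberUnits}, so we restrict attention to $f(s,t)\neq 0$. For such $(s,t)$ the product formula gives $\mathrm{N}(f(s,t))=\norm{f(s,t)}_\infty$, so the hypothesis $\mathrm{N}(f(s,t))\le S$ rearranges to
\[
\prod_{\nu\mid\infty}\prod_{i=1}^d|\beta_is-\alpha_it|_\nu\ll_f S,
\]
where each $|\cdot|_\nu$ is extended to $L$ by fixing a place of $L$ above $\nu$.

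\textbf{Using separability.} Writing $M_\nu=\max\{|s|_\nu,|t|_\nu\}$, the distinctness of the roots of $f$ in $\PP^1(\bar K)$ forces, at each $\nu\mid\infty$, at most one linear factor to be substantially smaller than $M_\nu$: a triangle-inequality argument on the chordal distance in $\PP^1_{K_\nu}$ produces an index $i(\nu)$ with $|\beta_js-\alpha_jt|_\nu\asymp_f M_\nu$ for every $j\ne i(\nu)$. Taking the product over infinite places and using $\prod_\nu M_\nu=\norm{(s,t)}_\infty\asymp R$ therefore yields
\[
\prod_{\nu\mid\infty}|\beta_{i(\nu)}s-\alpha_{i(\nu)}t|_\nu\ll_f \frac{S}{R^{d-1}}.
\]
Since the map $\phi\colon\Omega_{K,\infty}\to\{1,\dots,d\}$, $\nu\mapsto i(\nu)$, depends on $(s,t)$ but takes only $d^{s_K}=O_{f,K}(1)$ possible values, it is enough to estimate the number of $(s,t)$ for each fixed $\phi$.

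\textbf{Lattice count and main obstacle.} With $\phi$ fixed, decompose dyadically $|t|_\nu\sim T_\nu$ and $|\beta_{\phi(\nu)}s-\alpha_{\phi(\nu)}t|_\nu\sim V_\nu$ subject to $\prod_\nu T_\nu\ll R$ and $\prod_\nu V_\nu\ll S/R^{d-1}$. Lemma~\ref{Le: Number.OK.Points} bounds the number of admissible $t\in\O_K$ by $\ll 1+\prod_\nu T_\nu$. For each such $t$, the condition on $s\in\O_K$ describes a translate of a rank-$s_K$ sublattice in $\prod_{\nu\mid\infty}K_\nu$, and a shifted application of the same lemma produces $\ll 1+\prod_\nu V_\nu$ admissible $s$. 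Multiplying these through and summing over the $O(R^\varepsilon)$ dyadic classes and over the $O(1)$ functions $\phi$ delivers the asserted bound $R^{1+\varepsilon}(1+S/R^{d-1})$. The chief technical nuisance is the shifted lattice count: the centres $\alpha_{\phi(\nu)}t/\beta_{\phi(\nu)}$ lie in $L$ rather than $K$, so one must fix consistent embeddings of $L$ into the completions $K_\nu\hookrightarrow L_{\omega(\nu)}$ and verify that the implied constants stay uniform in the shift and in the $\beta_i$'s.
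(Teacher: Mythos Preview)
Your approach is correct and essentially identical to the paper's: factor $f$ over a finite extension, use separability to isolate at most one small linear factor at each infinite place so that $\prod_{\nu\mid\infty}|\text{small factor}|_\nu\ll S/R^{d-1}$, then count $t$ first and, for fixed $t$, count $s$ via a box/lattice argument, summing over $O(R^\varepsilon)$ dyadic classes. The only cosmetic differences are that the paper does not dyadically decompose $|t|_\nu$ (it simply uses $\norm{t}\ll R^{1/s_K}$ and Lemma~\ref{Le: Number.OK.Points} directly) and handles your ``shifted lattice count'' implicitly via the difference trick $s-s'\in\O_K$, so the obstacle you flag is not a genuine one.
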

\begin{proof}
First note that there are $O(1)$ many primitive solutions to $f(s,t) = 0$.
 For the remainder we will therefore proceed to only count the contribution such that $f(s,t) \neq 0$.
    After rescaling $f$ by a constant we may assume that it is of the form
    \[
    f = \prod_{i=1}^d (s-a_i t),
    \]
    where $a_i \in \overline{K}$. Since $K(a_1, \hdots, a_d)$ is a finite algebraic extension of $K$ we may extend each place $\nu \mid \infty$. By an abuse of notation we will denote the extended place by $\nu$. The elements $a_i$ are pairwise different since $f$ was assumed to be separable. Therefore, if we take a constant $c >0$ sufficiently small in terms of the polynomial $f$, then given $\nu \mid \infty$ there is at most one index $i_\nu \in \{1, \hdots, d\}$ such that
    \[
    |s-a_{i_\nu}t|_\nu < c \max\{|s|_\nu,|t|_\nu\}.
    \]
    Therefore we have
    \[
    \mathrm{N}(f(s,t)) = \prod_{\nu \mid \infty} \prod_{i=1}^d |s-a_i t|_\nu \asymp R^{d-1} \prod_{\nu \mid \infty} |s-a_{i_\nu} t|_\nu.
    \]
    Fix a place $\omega \mid \infty$ and divide the absolute values $|s-a_{i_\nu} t|_\nu$ for $\nu \neq \omega$ into dyadic intervals $(\gamma_\nu,2\gamma_\nu]$, say. Then since $\mathrm{N}(f(s,t)) \leq S$ we have that
    \[
    |s-a_{i_\omega}t|_\omega \ll \frac{S}{R^{d-1}} \prod_{\nu \neq \omega} \gamma_\nu^{-1}.
    \]
    For fixed $t$ these restrictions imply that there are at most $O(1+ S/R^{d-1})$ many $s$ available. Since $\norm{t} \ll R$, from Lemma~\ref{Le: Number.OK.Points} we see that each such dyadic decomposition contributes at most $O(R(1+S/R^{d-1}))$. Clearly we must only consider dyadic intervals such that $\gamma_\nu \ll R^{1/s_K}$. Since $\mathrm{N}(f(s,t)) \geq 1$ whenever $(s,t) \in \O_K^2$ is not a zero of $f$ we have that
    \[
    \gamma_\nu \gg R^{-d-s_K}.
    \]
    It follows that there are $O(R^\varepsilon)$ relevant dyadic intervals to consider, which completes the proof of the lemma.
\end{proof}

\section{Geometry of numbers and rational points on conics}
In this section we will first recall basic properties of $\O_K$-lattices and establish analogues of classical results from the geometry of numbers. In the second part of this section, we use them to obtain uniform upper bounds for the number of rational points on conics.
\subsection{Lattices} \label{sec.lattices}
Let $R$ be a Dedekind domain with field of fractions $K$. We call a finitely generated torsion-free module $\Lambda$ an \emph{$R$-lattice} and denote the corresponding vector space $K\Lambda$ by $V$. The dimension of $V$ is called the \emph{rank} of $\Lambda$. Suppose that $\Lambda'\subset V$ is another $R$-lattice of the same rank $r$ as $\Lambda$. By the invariant factor theorem \cite[Theorem 4.14]{maximalorders}, there exist elements $e_1,\dots, e_r\in \Lambda$ and unique fractional ideals $\mathfrak{b}_1,\dots ,\mathfrak{b}_r, \mathfrak{c}_1,\dots, \mathfrak{c}_r \subset K$ such that $\mathfrak{c}_1\subset \cdots \subset\mathfrak{c}_r$ and
\begin{equation*}
    \Lambda = \bigoplus_{i=1}^r\mathfrak{b}_ie_i\quad\text{and}\quad \Lambda'=\bigoplus_{i=1}^r \mathfrak{c}_i\mathfrak{b}_ie_i.
\end{equation*}
Moreover, $\Lambda'\subset \Lambda$ if and only if $\mathfrak{c}_i\subset R$ for $i=1,\dots , r$. We then define the \emph{index ideal} of $\Lambda'$ in $\Lambda$ to be the fractional ideal $(\Lambda\colon\Lambda')=\mathfrak{c}_1\cdots \mathfrak{c}_r$. Note that if $\Lambda'\subset \Lambda$, then $\mathrm{N}((\Lambda\colon\Lambda'))=[\Lambda\colon \Lambda']$, where the right hand side denotes the ordinary index of abelian groups. In addition, if $a\in (\Lambda\colon \Lambda')$, then we have $\Lambda\subset a^{-1}\Lambda'$.

Let us now specialise to the case where $R=\O_K$ is the ring of integers of a global field. Suppose we are given an $\O_K$-lattice $\Lambda$. We can then form the $\O_\p$-lattice $\Lambda_\p\coloneqq \Lambda\otimes \O_\p$ for any finite prime $\p$. If $\Lambda'\subset V$ is another lattice of full rank, then we have $\Lambda_\p=\Lambda'_\p$ for almost all $\p$ and so it is clear that we have $(\Lambda\colon \Lambda')=\bigcap (\Lambda_\p\colon \Lambda'_\p)$, where the intersection takes place in $\O_K$. If we are given sublattices $L_\p\subset V_\p=K_\p V$ for all finite primes $\p$ such that $L_\p=\Lambda_\p$ for all but finitely many $\p$, then $\Lambda'\coloneqq \bigcap L_\p$ defines an $\O_K$-lattice in $V$ such that $\Lambda'_\p=L_\p$ for all $\p$. It follows from the local description of the index ideal that if $A\in \GL(V)$, we have $(\Lambda\colon A\Lambda')=(\det(A))(\Lambda\colon \Lambda')$. For details we refer the reader to Sections 4 and 5 of Reiner~\cite{maximalorders}.

If $V=K^n$ and $\Lambda\subset V$ is an $\O_K$-lattice of rank $n$, then the determinant $\det(\Lambda)$ is defined to be $\mathrm{N}((\O_K^n\colon \Lambda))$.
For $\nu \mid \infty$ let $S_\nu$ be an open, symmetric, convex and bounded subset of $K_\nu^n$ such that for all complex places $\nu$ we have $S_\nu = \alpha S_\nu$ whenever $\alpha \in k_\nu$ and $|\alpha|_\nu = 1$. We shall consider
\[
S = \prod_{\nu \mid \infty} S_\nu.
\]
We may realize $\Lambda$ via the diagonal embedding inside $\prod_{\nu \mid \infty}K_\nu^n$. Let $k_\infty$ be the completion of $k$ at the infinite place and $|\cdot|_\infty$ be the associated absolute value on $k_\infty$. That is, if $k=\QQ$ then $k_\infty=\RR$ and $|\cdot|_\infty$ is the usual absolute value, whereas if $k=\FF_q(t)$, then $k_\infty=\FF_q((t^{-1}))$ and $|\cdot|_\infty$ corresponds to the absolute value induced by the degree on the $\FF_q(t)$. We define the $i$-th successive minimum of $\Lambda$ with respect to $S$ to be
\[
\lambda_{i,S} \coloneqq \inf \{ |\lambda|_\infty\colon \lambda \in k_\infty\text{ and } \Lambda \cap \lambda S \text{ contains $i$ linearly independent vectors} \}.
\]
It is clear that we have
\[
\lambda_{1,S} \leq \lambda_{2,S} \leq \cdots \leq \lambda_{n,S}.
\]
Frequently we shall take 
\begin{equation} \label{eq.standard_S_nu}
S_\nu = \{ \bm{x} \in K_\nu^n \colon |x_i|_\nu < 1 \text{ for } i = 1, \hdots, n\},
\end{equation}
in which case we will denote the successive minima with respect to $S$ simply by $\lambda_i$. Recall that $K_\nu$ is a locally compact abelian group, so that in particular it can be endowed with a Haar measure $\dd x_\nu$ that we normalise in such a way that $\dd x_\nu$ is the usual Lebesgue measure when $\nu$ is real, 2 times the Lebesgue measure when $\nu$ is complex and $\int_{\O_\nu}\dd x_\nu=|\mathfrak{D}_\nu|_\nu^{1/2}$, where $\mathfrak{D}_\nu$ is the local different of the non-archimidean place $\nu$. We can extend the Haar measure to $K_\nu^n$ by $\dd\bm{x}_\nu=\dd x_{1,\nu}\cdots \dd x_{n,\nu}$. If $S_\nu\subset K_\nu^n$ is measurable, we shall write $\vol(S_\nu)=\int_{S_\nu}\dd \bm{x}_\nu$.
We have the following version of Minkwoski's second theorem over global fields.
\begin{lemma} \label{lem.minkowskis_second_theorem}
    Let $\Lambda$ and $S$ be as above and let $\lambda_{1,S} \leq \cdots \leq \lambda_{n,S}$ the successive minima of $\Lambda$ with respect to $S$. Then we have
    \[
    \det(\Lambda) \asymp_{K, n} \prod_{\nu \mid \infty} \vol(S_\nu) (\lambda_{1,S} \cdots \lambda_{n,S})^{d_K}.
    \]
\end{lemma}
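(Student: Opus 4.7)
The plan is to reduce to the classical Minkowski second theorem via restriction of scalars to $\ZZ$ (resp.\ $\FF_q[t]$). Via the diagonal embedding, view $\Lambda$ as a discrete subgroup of $V_\infty \coloneqq \prod_{\nu \mid \infty} K_\nu^n$, which is a $k_\infty$-vector space of dimension $n d_K$ by virtue of the isomorphism $K \otimes_k k_\infty \cong \prod_{\nu \mid \infty} K_\nu$. Fix an integral basis $\omega_1, \hdots, \omega_{d_K}$ of $\O_K$ over $\ZZ$ (resp.\ $\FF_q[t]$). Then $\Lambda$ becomes a free $\ZZ$-module (resp.\ $\FF_q[t]$-module) of rank $n d_K$ sitting discretely and cocompactly inside $V_\infty$, and a direct computation with our normalisation of the Haar measure shows that its covolume in $V_\infty$ equals $c_K \det(\Lambda)$ for some $c_K > 0$ depending only on $K$ and $n$.

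The heart of the argument is to compare our successive minima $\lambda_{k, S}$ (defined via $K$-linear independence) with the classical successive minima $\lambda'_1 \leq \cdots \leq \lambda'_{n d_K}$ of $\Lambda$ viewed as a $\ZZ$-lattice (resp.\ $\FF_q[t]$-lattice) in $V_\infty$ with respect to $S = \prod_{\nu \mid \infty} S_\nu$. I claim that for each $1 \leq k \leq n$,
\[
\lambda_{k, S} \,\leq\, \lambda'_{(k-1)d_K + 1} \,\leq\, \lambda'_{k d_K} \,\ll_K\, \lambda_{k, S}.
\]
The first inequality is a dimension count: if $w_1, \hdots, w_{(k-1)d_K + 1} \in \Lambda$ are $\ZZ$-linearly (resp.\ $\FF_q[t]$-linearly) independent vectors lying in $\mu S$, then their $K$-span has $\ZZ$-rank at least $(k-1)d_K + 1$, hence $K$-dimension at least $k$, so one can extract $k$ of them that are $K$-linearly independent. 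For the upper bound, let $v_1, \hdots, v_k \in \Lambda$ be $K$-linearly independent vectors witnessing $\lambda_{k, S}$, so $v_i \in \lambda S$ with $|\lambda|_\infty = \lambda_{k, S}$. Then the $k d_K$ vectors $\{\omega_j v_i\}_{i,j}$ are $\ZZ$-linearly (resp.\ $\FF_q[t]$-linearly) independent, and at each infinite place $\nu$ they sit in $\omega_j \lambda S_\nu \subseteq C_K \lambda S_\nu$, where $C_K$ depends only on $K$, $n$ and the basis $\{\omega_j\}$ (using the symmetry hypothesis on $S_\nu$ to absorb the scalar action of $\omega_j$).

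Invoking the classical Minkowski second theorem---over $\RR$ in the number field case, and over $\FF_q((t^{-1}))$ (due to Mahler) in the function field case---applied to $\Lambda$ as an $n d_K$-dimensional $\ZZ$-lattice (resp.\ $\FF_q[t]$-lattice) inside $V_\infty$ with body $S$, we obtain
\[
\prod_{i=1}^{n d_K} \lambda'_i \cdot \prod_{\nu \mid \infty} \vol(S_\nu) \,\asymp_{K, n}\, c_K \det(\Lambda).
\]
Combined with $\prod_{i} \lambda'_i \asymp_K \prod_{k=1}^n \lambda_{k,S}^{d_K}$, which is an immediate consequence of the comparison of minima, this yields the stated estimate.

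The main obstacle I anticipate is executing the comparison of successive minima cleanly so that the implied constants depend only on $K$ and $n$---in particular at complex archimedean places of $K$, where multiplication by $\omega_j \in \O_K$ acts as a genuine complex scalar rather than as a real rescaling, and where the symmetry hypothesis on $S_\nu$ must be used in an essential way to bound $\omega_j S_\nu$ by a real scaling of $S_\nu$. A further technicality in positive characteristic is invoking Minkowski's second theorem over $\FF_q((t^{-1}))$ in the precise form compatible with our Haar-measure normalisation, and tracking the constant $c_K$ relating the covolume in $V_\infty$ to $\det(\Lambda)$.
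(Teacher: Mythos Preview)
The paper does not actually prove this lemma: immediately after the statement it simply cites Bombieri--Vaaler and McFeat, remarking that their formulations can be brought into the stated form by the considerations in Broberg's Corollary to Lemma~5. So there is no ``paper's own proof'' to compare against beyond a literature reference.

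Your restriction-of-scalars argument is essentially the standard route by which those references establish the result, so in that sense you are reconstructing the cited proof rather than offering a genuinely different one. The two key steps---computing the covolume of $\Lambda$ in $V_\infty$ as $c_K\det(\Lambda)$, and sandwiching the $K$-linear successive minima $\lambda_{k,S}$ between consecutive blocks $\lambda'_{(k-1)d_K+1},\dots,\lambda'_{kd_K}$ of the $\ZZ$- (resp.\ $\FF_q[t]$-)linear minima---are correct, and your dimension count for the lower bound and the $\{\omega_j v_i\}$ construction for the upper bound are the right mechanisms. The obstacles you flag are real but routine: at complex places the circular symmetry hypothesis on $S_\nu$ is exactly what lets you replace multiplication by $\omega_j$ with multiplication by $|\omega_j|\in\RR$, and at non-archimedean infinite places in the function-field case one needs the appropriate ultrametric notion of convexity (closure under scaling by $\alpha\in K_\nu$ with $|\alpha|_\nu\le 1$) to absorb $\omega_j$ into a $k_\infty$-scalar. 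With those points made precise, your outline goes through; for the purposes of this paper, however, citing the literature as the authors do is entirely adequate.
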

This was proven independently by Bombieri--Vaaler~\cite[Theorems 3 and 6]{bombieri_vaaler} and McFeat~\cite{mcfeat} for number fields and by McFeat~\cite{mcfeat} in the case of function fields. The formulation in the respective works looks slightly different, but one can arrive at our presentation of the result by the same considerations as in the proof of the Corollary to Lemma 5 by Broberg~\cite{broberg2001rational}. We remark that if we choose $S_\nu$ as in~\eqref{eq.standard_S_nu} then Lemma~\ref{lem.minkowskis_second_theorem} states
\begin{equation}\label{Eq: SuccessiveMinima}
\det(\Lambda) \asymp (\lambda_1 \cdots \lambda_n)^{d_K}.
\end{equation}
Note that if $\lambda \in k_\infty$, then $|\lambda|_\infty = |\lambda|_\nu^{d_\nu}$, where $d_\nu$ is the degree of the extension $K_\nu/k_\infty$. Therefore, by the discreteness of $\Lambda$ and the definition of the successive minima, \eqref{Eq: SuccessiveMinima} implies that $\Lambda$ contains an element $\bm{x}$ with $|\bm{x}|_\nu \leq \lambda_1^{d_\nu}$ with equality for at least one $\nu\mid\infty$, so that in particular 
\[
\norm{\bm{x}}_\infty =\prod_{\nu\mid \infty}|\bm{x}|_\nu \leq\prod_{\nu\mid \infty}\lambda_1^{d_\nu}=\lambda_1^{d_K}\ll \det (\Lambda)^{1/n}. 
\]
A useful consequence of this is the following. Let $\mathfrak{a}\subset \O_K$ be an ideal. Then $\mathfrak{a}$ is clearly an $\O_K$-module with $\det (\mathfrak{a})=\mathrm{N}(\mathfrak{a})$ and by \eqref{Eq: SuccessiveMinima} it contains an element $x\in \O_K$ such that $\mathrm{N}(x)=\norm{x}_\infty \ll \mathrm{N}(\mathfrak{a})$. Moreover, because $(x)\subset \mathfrak{a}$, we also have $\mathrm{N}(\mathfrak{a})\leq \mathrm{N}(x)$, so that in fact $\mathrm{N}(\mathfrak{a})\asymp \mathrm{N}(x)$. We will make frequent use of this fact without further comment. 

For the following lemma, we need to introduce some notation. For any place $\nu\mid \infty$, we let $P_\nu$ denote the maximal compact subgroup of $\GL_n(K_\nu)$. When $K_\nu=\RR$, this is just $O_n(\RR)$, when $K_\nu=\CC$ this is $U_n(\CC)$, while for non-archimedean places it is $\GL_n(\O_\nu)$, where $\O_\nu$ is the ring of integers of $K_\nu$. By the Iwasawa decomposition, as presented by Weil \cite[Chapter II, \S 2, Theorem 1]{WeilBasicNT}, for any matrix $M\in \GL_n(K_\nu)$, there exists a matrix $A_\nu\in P_\nu$ such that $A_\nu M$ is upper triangular. Moreover, we have $|A_\nu\bm{x}|_\nu\asymp |\bm{x}|_\nu$ for any $\bm{x}\in K_\nu^n$ and $A_\nu\in P_\nu$.

\begin{lemma}\label{Le: GoodLattice}
    Let $\Lambda \subset K^n$ be a lattice with successive minima $\lambda_1\leq \cdots \leq \lambda_n$. Then there exists a free $\O_K$ lattice $\Lambda'\subset K^n$ with  basis $\bm{b}_1,\dots, \bm{b}_n$ such that 
    \begin{enumerate}[(i)]
        \item $\Lambda\subset \Lambda'$,
        \item $\det (\Lambda) \asymp_{K, n} \det (\Lambda')$,
        \item there exist matrices $A_\nu\in P_\nu$ such that $A_\nu \bm{b}_i= (b_{1,\nu}^{(1)},\dots, b_{i,\nu}^{(i)},0,\dots, 0)$ with $|b_{i,\nu}^{(i)}|_\nu \asymp_{K, n} \lambda_i^{d_\nu}$ for all $1\leq i \leq n$ and $\nu\mid \infty$,\item if $\bm{x}\in \Lambda$ is given by $\bm{x}=\sum_{i=1}^ny_i\bm{b}_i$ with $y_i\in \O_K$, then $|y_i|_\nu \ll_{K,n} |\bm{x}|_\nu \lambda_i^{-d_\nu}$ for all $\nu\mid\infty$. 
    \end{enumerate}
\end{lemma}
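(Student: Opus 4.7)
The plan is to realise $\Lambda'$ as a bounded-class rescaling of the free sublattice generated by a family of vectors realising the successive minima, using the Iwasawa decomposition at each infinite place to control the per-place asymptotics.

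First, using the defining property of the successive minima, I would pick linearly independent $\bm{w}_1,\dots,\bm{w}_n \in \Lambda$ satisfying $|\bm{w}_i|_\nu \leq \lambda_i^{d_\nu}$ at every infinite place $\nu$: for each $i$, such a $\bm{w}_i$ can be extracted from $\Lambda \cap t_i S$ (where $t_i \in k_\infty$ has $|t_i|_\infty$ arbitrarily close to $\lambda_i$), while ensuring linear independence from $\bm{w}_1,\dots,\bm{w}_{i-1}$. Applying the Iwasawa decomposition (as recalled just before the lemma) to the matrix $[\bm{w}_1 \mid \cdots \mid \bm{w}_n]$ at each infinite place yields $A_\nu \in P_\nu$ that upper-triangularises it, with diagonal entries $d_{i,\nu}$. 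The bound $|d_{i,\nu}|_\nu \leq |A_\nu \bm{w}_i|_\nu \asymp |\bm{w}_i|_\nu \leq \lambda_i^{d_\nu}$ is immediate. For the matching lower bound, writing $M_0 \coloneqq \bigoplus_i \O_K \bm{w}_i \subset \Lambda$, one has
\[
\prod_{\nu\mid\infty}\prod_{i=1}^n |d_{i,\nu}|_\nu = \det(M_0) \geq \det(\Lambda) \asymp \prod_{\nu\mid\infty}\prod_{i=1}^n \lambda_i^{d_\nu}
\]
by Lemma~\ref{lem.minkowskis_second_theorem}; since each factor $|d_{i,\nu}|_\nu/\lambda_i^{d_\nu}$ lies in $(0,C]$ for some $C = O_{K,n}(1)$ while the full product is bounded below, the elementary fact that bounded-above numbers whose product is bounded below are individually bounded below forces $|d_{i,\nu}|_\nu \asymp \lambda_i^{d_\nu}$ for every pair $(i,\nu)$.

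Next, to construct $\Lambda'$ I would enlarge $M_0$ by a principal fractional ideal. The quotient $\Lambda/M_0$ is a finite $\O_K$-module of cardinality $N_0 = \det(M_0)/\det(\Lambda) = O_{K,n}(1)$; its annihilator ideal $\mathfrak{e} \subset \O_K$ satisfies $\mathrm{N}(\mathfrak{e}) \leq N_0$ and $\mathfrak{e}\Lambda \subset M_0$. Using the fixed ideal-class representatives $\mathfrak{a}_1,\dots,\mathfrak{a}_h$, I would write $\mathfrak{e}^{-1} = \gamma\, \mathfrak{a}_k$ for some $\gamma\in K^*$ and $k\in\{1,\dots,h\}$, yielding $\Lambda \subset \mathfrak{e}^{-1} M_0 \subset (\gamma) M_0$ with $\mathrm{N}((\gamma))/\mathrm{N}(\mathfrak{e}^{-1}) = \mathrm{N}(\mathfrak{a}_k)^{-1} = O_K(1)$. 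The product-formula condition $\prod_\nu |\gamma|_\nu = \mathrm{N}((\gamma)) = O_K(1)$ lets me apply Lemma~\ref{lem.good_units} to replace $\gamma$ by a unit multiple with $|\gamma|_\nu \asymp 1$ at every $\nu\mid\infty$ (without altering $(\gamma)$). Setting $\bm{b}_i \coloneqq \gamma\bm{w}_i$ and $\Lambda' \coloneqq \bigoplus_i \O_K \bm{b}_i = (\gamma)M_0$, property (i) is immediate and (ii) follows from $\det(\Lambda')/\det(\Lambda) = \mathrm{N}((\gamma))^n N_0 = O_{K,n}(1)$. Property (iii) holds since $A_\nu \bm{b}_i = \gamma\, A_\nu \bm{w}_i$ is upper triangular with $(i,i)$-entry $\gamma\, d_{i,\nu}$ of $\nu$-norm $|\gamma|_\nu |d_{i,\nu}|_\nu \asymp \lambda_i^{d_\nu}$; (iv) follows from back-substitution on the upper-triangular system $A_\nu \bm{x} = \sum_i y_i A_\nu \bm{b}_i$ starting from $i=n$.

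The main technical subtlety I expect to address is the use of the annihilator ideal $\mathfrak{e}$ rather than the integer index $N_0$: in positive characteristic the integer $N_0$ may vanish in $\O_K$, so the naive inclusion ``$N_0\Lambda\subset M_0$'' gives no information, and the ideal-theoretic formulation is needed for a uniform treatment in both number fields and function fields.
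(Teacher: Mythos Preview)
Your proposal is correct and follows essentially the same architecture as the paper's proof: pick independent vectors realising the successive minima, triangularise at each infinite place via Iwasawa, use Minkowski's second theorem to force the diagonal entries to have the right size, then enlarge the free sublattice by a bounded principal factor to swallow $\Lambda$. The only difference is in how the enlargement is produced: the paper picks an element $b$ of the index ideal $(\Lambda:L)$ with $\mathrm{N}(b)\asymp\mathrm{N}(\Lambda:L)\asymp 1$ (using the earlier fact that every ideal contains an element of comparable norm) and sets $\Lambda'=b^{-1}L$, whereas you pass through the annihilator of $\Lambda/M_0$ and a class-group representative to manufacture a principal $(\gamma)$ with $\mathfrak{e}^{-1}\subset(\gamma)$ and $\mathrm{N}((\gamma))\asymp 1$. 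Both routes are standard and equally valid; your observation that the naive ``multiply by the integer index'' fails in positive characteristic is correct, and both your annihilator-ideal formulation and the paper's index-ideal formulation circumvent this uniformly.
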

\begin{proof}
    By definition of the successive minima, we can find $K$-linearly independent vectors $\bm{c}_1,\dots, \bm{c}_n\in \Lambda$ such that $|\bm{c}_i|_\nu \leq \lambda_i^{d_\nu}$ for all $\nu\mid \infty$ with equality for at least one $\nu$. Let now $L$ be the free $\O_K$-lattice given by 
    \[
    L\coloneqq \bigoplus_{i=1}^n \O_K\bm{c}_i.
    \]
    Note that we clearly have $L\subset \Lambda$. Our next goal is to show that $\det (L)\asymp \det (\Lambda)$.
    
To do so, observe that since $L$ is free, we have $\det( L)=\mathrm{N}(\det(\bm{c}_1, \bm{c}_2,\dots, \bm{c}_n))$, where $(\bm{c}_1,\dots, \bm{c}_n)$ is the $n\times n$ matrix having $\bm{c}_i$ as its $i$th column. For every place $\nu\mid \infty$, choose a matrix $A_\nu$ in the maximal compact subgroup of $\GL_n(K_\nu)$ such that 
\begin{equation}\label{Eq: IwasawaDecomp}
A_\nu\bm{c}_i=(c_{i,\nu}^{(1)},\dots, c_{i,\nu}^{(i)},0,\dots, 0),
\end{equation}
which is always possible by the Iwasawa decomposition. Since $|\det(A_\nu)|_\nu=1$, we then have 
    \begin{equation}\label{Eq: DeterminantIndex}
    \mathrm{N}(\det (\bm{c}_1,\dots, \bm{c}_n))=\prod_{\nu\mid \infty}|c_{1,\nu}^{(1)}\cdots c_{n,\nu}^{(n)}|_\nu.   
    \end{equation}
    As $A_\nu$ is norm preserving, we also have $|c_{i,\nu}^{(i)}|_\nu\ll |\bm{c}_i|_\nu \leq \lambda_i^{d_\nu}$. Since $L\subset \Lambda$, we must have $\det( L) \geq\det (\Lambda)$, whence 
    \[
    \det (\Lambda) \leq \det (L)\ll (\lambda_1\cdots \lambda_n)^{d_K}
    \]
    by \eqref{Eq: DeterminantIndex}. However, by Lemma~\ref{lem.minkowskis_second_theorem} we have $\det (\Lambda) \asymp (\lambda_1\cdots \lambda_n)^{d_K}$ and thus \[
    \det (L)\asymp (\lambda_1\cdots \lambda_n)^{d_K}\asymp \det \Lambda\]
    as claimed.

    We now continue with the construction of the lattice $\Lambda'$ whose existence we want to show. Let us  choose $b\in (\Lambda\colon L)$, so that $\Lambda\subset b^{-1}L$. We have
    \[
    1\asymp \frac{\mathrm{N}(\O_K\colon L)}{\mathrm{N}(\O_K\colon \Lambda)}=\mathrm{N}(\Lambda \colon L),
    \]
so that upon multiplying $b$ with a unit if necessary, we may assume that $|b|_\nu\asymp 1$ for all $\nu\mid\infty$. If we define the lattice $\Lambda'=b^{-1}L$, then by construction we have $\Lambda\subset \Lambda'$. In addition, it holds that 
\[
\det (\Lambda') =\mathrm{N}(\O_K\colon b^{-1}\Lambda)=\frac{\det (\Lambda)}{\mathrm{N}(b)^n}\asymp \det (\Lambda),
\]
as claimed in (ii). It thus remains to verify the properties asserted in (iii) and (iv) of the lemma. If we define $\bm{b}_i=b^{-1}\bm{c}_i$ for $i=1,\dots, n$, then by construction $\Lambda'$ is the free $\O_K$-lattice with basis $\bm{b}_1,\dots, \bm{b}_n$. Let $A_\nu \in P_\nu$ be the matrices in \eqref{Eq: IwasawaDecomp} and define $b_{i,\nu}^{(j)}\in K_\nu$ to be the $j$th entry of $A_\nu\bm{b}_i$. Note that this value is explicitly given by $b^{-1}c_{i,\nu}^{(j)}$. Returning to \eqref{Eq: DeterminantIndex} and recalling that $\det (\Lambda)\asymp \det (L)=\mathrm{N}(\det(\bm{c}_1,\dots,\bm{c}_n))$, we deduce from Lemma~\ref{lem.minkowskis_second_theorem} that
\begin{align*}
    (\lambda_1\cdots \lambda_n)^{d_K}&\asymp \prod_{\nu\mid \infty}|c_{1,\nu}^{(1)}\cdots c_{n,\nu}^{(\nu)}|_\nu \\
    &\ll (\lambda_1\cdots \lambda_{i-1})^{d_K}\lambda_i^{d_K-d_\nu}|c_{i,\nu}^{(i)}|_\nu (\lambda_{i+1}\cdots \lambda_n)^{d_K},
\end{align*}
for any $1\leq i \leq n$ and $\nu\mid \infty$, where we used that $|c_{i,\nu}^{(i)}|_\nu \ll |\bm{c}_i|_\nu \leq \lambda_i^{d_\nu}$ and that $\sum d_\nu=d_K$. It follows that $|c_{i,\nu}^{(i)}|_\nu \asymp \lambda_i^{d_\nu}$. As $|b|_\nu\asymp 1$ for all $\nu\mid\infty$ and $|b_{i,\nu}^{(i)}|_\nu =|b|^{-1}_\nu |c_{i,\nu}^{(i)}|_\nu$, this completes the verification of (iii). 

Turning to (iv), suppose that $\bm{x}\in \Lambda$ is given by $\bm{x}=\sum_{i=1}^ny_i\bm{b}_i$ with $y_i\in \O_K$. Then we have 
\[
    |\bm{x}|_\nu \asymp |A_\nu\bm{x}|_\nu = \left|\begin{pmatrix} b_{1,\nu}^{(1)} & b_{2,\nu}^{(1)}&\cdots &b_{n,\nu}^{(1)}\\
    0 & b_{2,\nu}^{(2)} & \cdots & b_{n,\nu}^{(2)}\\
    \vdots &\ddots &\cdots &\vdots \\
    0&\cdots &0 & b_{n,\nu}^{(n)}\end{pmatrix}\begin{pmatrix} y_1 \\ \vdots \\ \vdots \\ y_n\end{pmatrix}\right|_\nu,
    \]
    which implies $|y_n|_\nu=|(A_\nu \bm{x})_n|_\nu |b_{n,\nu}^{(n)}|_\nu^{-1}$, where $(A_\nu\bm{x})_n$ denotes the $n$th entry of $A_\nu\bm{x}$ and hence 
    \[
    |y_n|_\nu \ll |\bm{x}|_\nu/\lambda_n^{d_\nu}
    \]
    by (iii). Similarly, we get 
    \[
    b_{n-1,\nu}^{(n-1)}y_{n-1}=(A_\nu\bm{x})_{n-1}-y_nb_{n,\nu}^{(n-1)}
    \]
    and thus 
    \[
    |y_{n-1}|_\nu \ll (|\bm{x}|_\nu+|y_nb_{n,\nu}^{(n-1)}|_\nu)/|b_{n-1,\nu}^{(n-1)}|_\nu^{-1}\ll |\bm{x}|_\nu \lambda_{n-1}^{-d_\nu}
    \]
    again by (iii) and using that $|b_{n,\nu}^{(n-1)}|_\nu \ll \lambda_n^{d_\nu}$. Continuing in this fashion completes the verification of (iv).
\end{proof}
Using this, by the same ideas and techniques as in chapter 12 of~\cite{davenport_book} one obtains very good bounds on the number of lattice points within a bounded box. 

\begin{lemma} \label{lem.lattice_in_box}
    Let $\Lambda$ be an $\O_K$-lattice of rank $n$ and for $\bm{R}=(R_\nu)_{\nu\mid\infty}\in \RR_{>0}^{s_K}$, define
    \[
    N(\Lambda,\bm{R})\coloneqq \# \left\{ \bm{x} \in \Lambda  \colon |\bm{x}|_\nu < R_\nu  \text{ for all }\nu\mid\infty \right\}. 
    \]
    Then if $\lambda_1,\dots, \lambda_n$ are the successive minima of $\Lambda$, we have 
    \[
    N(\Lambda,\bm{R})\asymp_{K,n} \prod_{i=1}^n\max\left\{1,\frac{|\bm{R}|}{\lambda_i^{d_K}}\right\}.
    \]
\end{lemma}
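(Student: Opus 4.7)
The plan is to adapt the standard geometry of numbers argument (in the spirit of Chapter 12 of Davenport) to the $\O_K$-setting using the technical groundwork established above. The upper and lower bounds will be handled separately, and both will be reduced to counting integral tuples in coordinate boxes via Lemma~\ref{Le: Number.OK.Points}. The whole point of Lemma~\ref{Le: GoodLattice} was to engineer a basis that makes this reduction painless.

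For the upper bound, I would first apply Lemma~\ref{Le: GoodLattice} to embed $\Lambda$ into a free $\O_K$-lattice $\Lambda' \supset \Lambda$ with comparable determinant and a basis $\bm{b}_1,\ldots,\bm{b}_n$ enjoying properties (iii) and (iv). Since the determinants match up to a constant, Lemma~\ref{lem.minkowskis_second_theorem} forces the successive minima of $\Lambda'$ to agree with those of $\Lambda$ up to $O_{K,n}(1)$, so I may work with $\Lambda'$ in place of $\Lambda$. Any $\bm{x}\in\Lambda'$ with $|\bm{x}|_\nu<R_\nu$ expands uniquely as $\bm{x}=\sum_i y_i\bm{b}_i$ with $y_i\in\O_K$, and property (iv) yields $|y_i|_\nu\ll R_\nu/\lambda_i^{d_\nu}$ for every $\nu\mid\infty$. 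Applying Lemma~\ref{Le: Number.OK.Points} to each $y_i$ in turn (with $\mathfrak{a}=\O_K$) produces $\ll\max\{1,|\bm{R}|/\lambda_i^{d_K}\}$ choices, using $\prod_{\nu\mid\infty}\lambda_i^{d_\nu}=\lambda_i^{d_K}$. Taking the product over $i$ gives the desired upper bound.

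For the lower bound, I would instead work directly with $\Lambda$: pick $K$-linearly independent $\bm{c}_1,\ldots,\bm{c}_n\in\Lambda$ realising the successive minima, so $|\bm{c}_i|_\nu\leq\lambda_i^{d_\nu}$ for all $\nu\mid\infty$. Every $y\in\O_K^n$ produces a distinct $\bm{x}=\sum_i y_i\bm{c}_i\in\Lambda$, and a small constant $c>0$ (depending only on $K$ and $n$, chosen to absorb the factor of $4$ coming from the weak triangle inequality at complex places) ensures that $|y_i|_\nu\leq cR_\nu/\lambda_i^{d_\nu}$ for all $i,\nu$ implies $|\bm{x}|_\nu<R_\nu$. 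To count such $y$, I would split the indices: for those with $|\bm{R}|/\lambda_i^{d_K}$ bounded below by a sufficiently large constant, the lower bound in Lemma~\ref{Le: Number.OK.Points} supplies $\gg|\bm{R}|/\lambda_i^{d_K}$ admissible $y_i$; for the remaining indices I simply set $y_i=0$, absorbing the resulting bounded factors into the implicit constant. The resulting product matches $\prod_i\max\{1,|\bm{R}|/\lambda_i^{d_K}\}$.

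I do not expect any serious obstacle: the conceptual work has already been done in Lemma~\ref{Le: GoodLattice} and Lemma~\ref{Le: Number.OK.Points}, and the only minor annoyance is bookkeeping the weak triangle inequality at complex places, which affects implicit constants but nothing else.
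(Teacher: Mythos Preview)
Your proposal is correct and follows essentially the same approach as the paper's proof: the upper bound via Lemma~\ref{Le: GoodLattice}(iv) and Lemma~\ref{Le: Number.OK.Points}, the lower bound via vectors realising the successive minima. Your remark about the successive minima of $\Lambda'$ is harmless but unnecessary, since property~(iv) of Lemma~\ref{Le: GoodLattice} is already phrased in terms of the $\lambda_i$ of the original lattice $\Lambda$; and your explicit handling of the indices with small $|\bm{R}|/\lambda_i^{d_K}$ by setting $y_i=0$ is a slightly more careful version of what the paper does implicitly.
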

\begin{proof}
For the lower bound, let $\bm{a}_1,\dots, \bm{a}_n\in \Lambda$ be such that $|\bm{a}_i|_\nu \leq \lambda_i^{d_\nu}$ for all $1\leq i \leq n$ and $\nu \mid \infty$. Then any $\bm{x}= \sum_{i=1}^n\mu_i \bm{a}_i$ with $\mu_i\in \O_K$ and $|\mu_i|_\nu \ll R_\nu / \lambda_i^{d_\nu}$ will be counted by $N(\Lambda, \bm{R})$ provided the implied constant is sufficiently small with respect to $n$. By Lemma~\ref{Le: Number.OK.Points} we have 
\[
\#\{(\mu_1,\dots, \mu_n)\in \O_K^n\colon |\mu_i|_\nu \ll R_\nu/\lambda_i^{d_\nu}\text{ for all }1\leq i \leq n \text{ and }\nu\mid\infty\}\asymp \prod_{i=1}^n \max\left\{1, \frac{|\bm{R}|}{\lambda_i^{d_K}}\right\},
\]
from which the lower bound follows.

Turning to the upper bound, let $\Lambda'$ be the lattice from Lemma~\ref{Le: GoodLattice}. Then as $\Lambda\subset \Lambda'$ it clearly suffices to prove the claimed upper bound for $N(\Lambda', \bm{R})$ instead of $N(\Lambda,\bm{R})$. Let $\bm{b}_1,\dots, \bm{b}_n$ be the basis of $\Lambda'$ and write $\bm{x}=\sum_{i=1}^ny_i\bm{b}_i$ with $y_i\in \O_K$. Then if $|\bm{x}|_\nu \leq R_\nu$, it follows from (iv) of Lemma~\ref{Le: GoodLattice} that 
\[
|y_i|_\nu \ll |\bm{x}|_\nu\lambda_i^{-d_\nu}.
\]
In particular, if $|\bm{x}|_\nu \leq R_\nu$, then $|y_i|_\nu \ll R_\nu \lambda_i^{-d_\nu}$ and by Lemma~\ref{Le: Number.OK.Points} the number of such $y_i\in \O_K$ is $O(\max\{1, |\bm{R}|\lambda_i^{-d_K}\})$ as desired. 
\end{proof}

\subsection{Rational points on conics}
Suppose we are given $\bm{R}_1,\dots, \bm{R}_n\in \RR_{>0}^{s_K}$ with $\bm{R}_i=(R_{i,\nu})_{\nu\mid\infty}$ and set 
\[
L(\bm{R}_1,\dots, \bm{R}_n)=\{\bm{x}\in \O_K^n\colon |x_i|_\nu \leq R_{i,\nu}\text{ for all }\nu\mid\infty \}.
\]
Let $F\in\O_K[x_1,x_2,x_3]$ be a quadratic form. In this section we are concerned with upper bounds for the number of elements in $\bm{x}\in L(\bm{R}_1,\bm{R}_2,\bm{R}_3)$ such that $F(\bm{x})=0$ which are uniform with respect to $F$. Given such $F$, we let $M\in \text{Mat}_{3\times 3}(K)$ denote the underlying matrix and define $\Delta(F)\subset \O_K$  and $\Delta_0(F)\subset \O_K$ to be the ideal generated by the determinant and $2\times 2$ minors of $M$ respectively. The following results were proved by Browning and Swarbrick-Jones for number fields \cite{browning_S-J} and go back to work of Heath-Brown \cite{HeathBrownCubicConicBundle} and Broberg~\cite{broberg2001rational}, who proved a slightly weaker version in the context of the rational numbers and number fields respectively. 
\begin{prop}\label{Prop: UniformConics}
    Let $F\in \O_K[x_0,x_1,x_2]$ be a non-singular quadratic form and $\bm{R}_1,\bm{R}_2,\bm{R}_3 \in \RR^{s_K}_{>1}$. Then 
    \[
    N_F(\bm{R}_1,\bm{R}_2,\bm{R}_3)\coloneqq \#\{\bm{x}\in \PP^2(K)\cap L(\bm{R}_1,\bm{R}_2,\bm{R}_2)\colon  F(\bm{x})=0 \}\ll_K 1+(|\bm{R}_1||\bm{R}_2||\bm{R}_3|)^{1/3}. 
    \]
\end{prop}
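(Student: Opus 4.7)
The strategy follows the parameterization approach pioneered by Heath-Brown~\cite{HeathBrownCubicConicBundle} and refined by Broberg~\cite{broberg2001rational}, transplanted to arbitrary global fields via the geometry-of-numbers tools developed in Section~\ref{sec.lattices}. If the conic $C = \{F = 0\}$ has no $K$-rational point then $N_F(\bm{R}_1,\bm{R}_2,\bm{R}_3) = 0$ and the bound is trivial; otherwise fix $\bm{x}_0 \in Z_2$ with $F(\bm{x}_0) = 0$ together with vectors $\bm{e}_1, \bm{e}_2 \in \O_K^3$ completing $\bm{x}_0$ to a $K$-basis of $K^3$. Writing $\bm{v}(s,t) = s\bm{e}_1 + t\bm{e}_2$ and $B$ for the symmetric bilinear form associated with $F$, the classical ``second intersection'' construction gives a birational morphism $\phi\colon \PP^1 \to C$ defined by
\[
\phi([s:t]) = [F(\bm{v}(s,t))\bm{x}_0 - 2B(\bm{x}_0, \bm{v}(s,t))\bm{v}(s,t)],
\]
whose three coordinates $G_1, G_2, G_3 \in \O_K[s,t]$ are binary quadratic forms with no common nontrivial zero; away from $O(1)$ exceptional fibres, $\phi$ induces a bijection on $K$-points.

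Every $\bm{x} \in C(K) \cap L(\bm{R}_1, \bm{R}_2, \bm{R}_3)$ is then the image of some primitive $(s,t) \in Z_1$ satisfying $|G_i(s,t)|_\nu \ll R_{i,\nu} \mathrm{N}(\mathfrak{d})^{1/s_K}$ for every $\nu \mid \infty$ and $i = 1,2,3$, where $\mathfrak{d} = \langle G_1(s,t), G_2(s,t), G_3(s,t)\rangle$ is the content of the image. A dyadic decomposition over $\mathrm{N}(\mathfrak{d})$, combined with divisor-type estimates in the spirit of Corollary~\ref{Cor: QuadRepII} and Lemma~\ref{Le: Divisor.bound.elements}, reduces the problem to bounding, for rescaled parameters $R'_{i,\nu} > 0$ with $\prod_{i,\nu} R'_{i,\nu}$ controlled by the original data, the number of primitive $(s,t) \in Z_1$ such that $|G_i(s,t)|_\nu \leq R'_{i,\nu}$ at all $\nu \mid \infty$.

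For the geometric heart of the argument, one shows that the region $T_\nu \subset K_\nu^2$ cut out by $|G_i(s,t)|_\nu \leq R'_{i,\nu}$ has $K_\nu$-measure $\ll (R'_{1,\nu} R'_{2,\nu} R'_{3,\nu})^{1/3}$. The key input is that since $G_1, G_2, G_3$ share no common nontrivial zero, the sextic map $[G_1:G_2:G_3]\colon\PP^1 \to \PP^2$ is finite, so an AM-GM-style estimate (coupled with the bounds for binary forms proved in Lemmas~\ref{Le: Polysmall} and~\ref{lem.binary_form_separable_bound}) controls the volume at each archimedean place. Applying Lemma~\ref{lem.lattice_in_box} to the lattice $\O_K^2 \subset K^2$ inside the box $T = \prod_{\nu \mid \infty} T_\nu$ (plus the extra constant $1$ accounting for the exceptional fibres of $\phi$) then delivers the claimed estimate $\ll 1 + (|\bm{R}_1||\bm{R}_2||\bm{R}_3|)^{1/3}$.

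The principal obstacle is ensuring that the final implied constant depends only on $K$ rather than on the coefficients of $F$. The forms $G_i$ --- and therefore the implicit constants appearing both in the passage from the $\bm{x}$-box to the $(s,t)$-region and in the volume estimate on $T_\nu$ --- depend on $F$ and on the initial choice of $\bm{x}_0$. Following Heath-Brown and Broberg, one resolves this by first selecting $\bm{x}_0$ so that $\|\bm{x}_0\|$ is controlled in terms of $\|F\|$ (via a classical small-zero bound for isotropic ternary quadratic forms, together with Lemma~\ref{lem.good_units} to rescale by a suitable unit), and then tracking how the $F$-dependent factors entering the geometry-of-numbers count cancel against one another. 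This uniformity argument is the most delicate aspect of the proof.
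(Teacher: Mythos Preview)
Your approach via rational parameterization is genuinely different from the paper's, and the uniformity issue you flag as ``the principal obstacle'' is a real gap that your outline does not close.

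The paper (like Heath-Brown~\cite{HeathBrownCubicConicBundle} and Broberg~\cite{broberg2001rational}, whom you cite but mis-describe) uses a $p$-adic lattice argument rather than parameterization. One picks a prime $\mathfrak{p}$ with $\mathrm{N}(\mathfrak{p})\asymp R^{1/3}$, where $R=|\bm{R}_1||\bm{R}_2||\bm{R}_3|$, and $\mathfrak{p}\nmid\Delta(F)$. The reduction of the conic modulo $\mathfrak{p}$ has $\mathrm{N}(\mathfrak{p})+1\asymp R^{1/3}$ points; a Hensel lift shows that the points in $L(\bm{R}_1,\bm{R}_2,\bm{R}_3)$ above each residue class lie in an $\O_K$-lattice of determinant $\mathrm{N}(\mathfrak{p})^3$, whose third successive minimum (after the skewing by $\gamma_i$) exceeds the box, so each class contributes $O(1)$ projective points by B\'ezout. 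If no such $\mathfrak{p}$ exists then $\|F\|$ is so large relative to $R$ that Lemma~\ref{Le:SizeCoeffcs} already gives $N_F\leq 4$. Uniformity in $F$ is automatic here: no small-zero bound, no parameterization, no cancellation of $\|F\|$-powers.

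In your scheme, by contrast, every step carries constants depending on $\|F\|$ and $\|\bm{x}_0\|$: the coefficients of the $G_i$, the resultant controlling the content $\mathfrak{d}$, and the implied constants in any volume bound for $T_\nu$ (which involve the pairwise resultants of the $G_i$). Your assertion that these factors ``cancel against one another'' is not substantiated, and neither Heath-Brown nor Broberg argues this way; the $p$-adic method exists precisely to sidestep this difficulty. (Two minor points: the map $[G_1:G_2:G_3]$ has degree $2$, not $6$; and even granting everything else, the dyadic sum over $\mathrm{N}(\mathfrak{d})$ together with the divisor bounds you invoke would introduce an unwanted $R^\varepsilon$.)
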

\begin{proof}
    If $K$ is  a number field, this is Theorem~4.7 of \cite{browning_S-J}, so we may assume that $K$ is a function field. Moreover, the proof is almost identical, so we shall be brief. Define $R=|\bm{R}_1||\bm{R}_2||\bm{R}_3|$ and choose prime ideals $\mathfrak{p}_1,\dots, \mathfrak{p}_r \subset \O_K$ such that 
    \[
    cR^{1/3}\leq \mathrm{N}(\p_1)\leq \cdots \leq \mathrm{N}(\p_r) \ll R^{1/3},
    \]
    where $c>0$ and $r$ are constants to be determined in due course. First suppose that $\mathrm{N}(\p_i)\mid \Delta(F)$ for $i=1,\dots ,r$. Then we have $\mathrm{N}(\Delta(F))\gg R^{r/3}$ and hence $\norm{F} \gg \mathrm{N}(\Delta(F))^{1/3s_K}\gg R^{r/9s_K}$. Letting $B=\prod_{\nu\mid\infty}\max\{R_{1,\nu}, R_{2,\nu}, R_{3,\nu}\}$, then it is clear that $B\leq R$ and that any $\bm{x}\in Z_2$ with $|x_i|_\nu\leq R_{i,\nu}$ satisfies $\norm{\bm{x}}_\infty\leq B$. In particular, if we choose $r>108$, it follows from Lemma~\ref{Le:SizeCoeffcs} that $N_F(\bm{R}_1,\bm{R}_2,\bm{R}_3)\ll 1$, which is sufficient. We may therefore assume from now on that there is a prime ideal $\mathfrak{p}$ with $\mathrm{N}(\p)\asymp R^{1/3}$ such that $\p\nmid \Delta(F)$.   
    Let $\FF_\p=\O_\p/\p\O_\p$. As the reduction of $F$ modulo $\p$ is non-singular, the projective conic defined by $F=0$ over $\FF_\p$ has precisely $\mathrm{N}(\p)+1$ points and our goal is to show that any such solution gives rise to at most $2$ projective points. Given that $\mathrm{N}(\p)\ll R^{1/3}$, this will be sufficient to complete the proof.

    Suppose that $\bm{x}_0\in \FF_\p^3\setminus\{\bm{0}\}$ satisfies $F(\bm{x}_0)\equiv 0 \:(\p)$. Then by a Hensel lifting argument we can always find $\bm{x}_1\in \O_\p$ such that $\bm{x}_1\equiv \bm{x}_0 \:(\p)$ and $F(\bm{x}_1)\equiv 0 \:(\p^2)$. Suppose now that $\bm{x}\in Z_2$ is such that $\bm{x}\equiv \lambda \bm{x}_1 \:(\p)$ for some $\lambda \in \O_K$ and $F(\bm{x})=0$.  It follows that there exists $\bm{z}\in \O_K^3$ such that $\bm{x}=\lambda \bm{x}_1+\pi\bm{z}$, where $\pi$ is a uniformizer of $\O_\p$. We then have 
    \[
     \pi \lambda \bm{z}\cdot \nabla F(\bm{x}_1) \equiv 0 \:(\p^2).
    \]
    Moreover, we can assume that $R$ is sufficiently large so that $\p$ is not equal to one of the fixed representatives of the class group of $K$. As $\bm{x}\in Z_2$, we must then have $\lambda \not\in \p$ and hence $\bm{z}\cdot \nabla F(\bm{x}_1)\equiv 0 \:(\p)$. Therefore, $\bm{x}\cdot \nabla F(\bm{x}_1)\equiv 0 \:(\p^2)$ and $\bm{x}$ must belong to the set 
    \[
    \{\bm{x}\in \O^3_\p\colon \bm{x}\equiv \lambda \bm{x}_0 \:(\p)\text{ and }\bm{x}\cdot \nabla F(\bm{x}_1)\equiv 0 \:(\p^2)\},
    \]
    which defines an $\O_\p$-lattice $L_\p$ of determinant $\mathrm{N}(\p)^3$ and rank 3. Let now $\Lambda$ be the unique $\O_K$-lattice such that $\Lambda_\p=L_\p$ and $\Lambda_\mathfrak{q}= \O_\mathfrak{q}$ for all $\mathfrak{q}\neq \p$. Moreover, choose elements $\gamma_1,\gamma_2,\gamma_3\in K$ such that $|\gamma_i|_\nu \asymp \prod_{j=1,2,3, j\neq i}R_{i,\nu}$ and define 
    \[
    L'=\{(\gamma_1x_1,\gamma_2x_2,\gamma_3x_3)\in K^3\colon (x_1,x_2,x_3)\in \Lambda\},
    \]
    which is an $\O_K$-lattice of determinant $\det(L')=|R|^2\mathrm{N}(\p)^3$. Let $\Lambda'$ be the lattice from Lemma~\ref{Le: GoodLattice} containing $L'$ with basis $\bm{b}_1,\bm{b}_2,\bm{b}_3\in K^3$ and successive minima $\lambda_1\leq \lambda_2\leq \lambda_3$. If we write $\bm{x}=y_1\bm{b}_1+y_2\bm{b}_2+y_3\bm{b}_3$, then it follows from (iv) of Lemma~\ref{Le: GoodLattice} that $|y_3|\ll |\bm{x}|_\nu \lambda_3^{-d_\nu}\ll R_{1,\nu}R_{2,\nu}R_{3,\nu}\lambda_3^{-d_\nu}$. Moreover, it follows from \eqref{Eq: SuccessiveMinima} that $\lambda_3^{3d_K}\gg \det (\Lambda') \asymp R^2 \mathrm{N}(\p)^3$ and hence after taking the product over all places, we deduce 
    \[
    \mathrm{N}(y_3)\ll \frac{R^3}{R^{2/3}\mathrm{N}(\p)^{1/3}}\ll 1.
    \]
    In particular, if we choose $c$ from the beginning of the lemma sufficiently large, then we must have $y_3=0$. It follows that $\bm{x}$ lies on the intersection of the quadric with a line, and hence contains at most 2 projective points by B\'ezout's theorem. 
\end{proof}

\begin{lemma}\label{Le: latticeQuadFrom}
    Let $\bm{x}\in \O_K$ be such that $F(\bm{x})=0$. Then if $\cha(K)\neq 2$, $\bm{x}$ must lie in at least one of $O(\tau(\Delta(F)))$ many  lattices $\Gamma$  such that $\det(\Gamma)\gg_K \frac{\mathrm{N}(\Delta(F))}{\mathrm{N}(\Delta_0(F))^{3/2}}$, where $\tau$ is the divisor function on ideals.
\end{lemma}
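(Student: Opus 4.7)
The plan is to construct the lattices $\Gamma$ by working prime-by-prime and then gluing, using the correspondence between $\O_K$-lattices in $K^3$ and compatible systems $\{\Lambda_\p\}_\p$ of local lattices recalled at the start of Section~\ref{sec.lattices}. Fix $\p \mid \Delta(F)$. Since $\cha(K) \neq 2$, the classical theory of symmetric bilinear forms over the discrete valuation ring $\O_\p$ provides a matrix $P_\p \in \GL_3(\O_\p)$ such that
\[
P_\p^T M P_\p = \diag\bigl(u_1 \pi^{\alpha_1},\, u_2 \pi^{\alpha_2},\, u_3 \pi^{\alpha_3}\bigr),
\]
with $\pi$ a uniformizer of $\O_\p$, $u_i \in \O_\p^\times$, and $\alpha_1 \leq \alpha_2 \leq \alpha_3$. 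Interpreting $\Delta(F)$ and $\Delta_0(F)$ via the invariant factors of $M$ then yields $v_\p(\Delta(F)) = \alpha_1 + \alpha_2 + \alpha_3$ and $v_\p(\Delta_0(F)) = \alpha_1 + \alpha_2$.

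Write $\bm{y} = P_\p^{-1} \bm{x}$ and $f_i = v_\p(y_i)$. The equation $F(\bm{x}) = 0$ becomes $\sum_{i=1}^3 u_i \pi^{\alpha_i} y_i^2 = 0$, so the minimum of $\alpha_1 + 2f_1$, $\alpha_2 + 2f_2$, $\alpha_3 + 2f_3$ is attained at least twice. Running through the finitely many valuation patterns compatible with this constraint, in each case I extract a sublattice $\Lambda_\p \subset \O_\p^3$ containing $P_\p^{-1}\bm{x}$ of index at least $\mathrm{N}(\p)^{\alpha_3 - (\alpha_1 + \alpha_2)/2}$; pulling back through $P_\p$ gives a sublattice of $\O_\p^3$ of the same index containing $\bm{x}$.

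Finally, for each $\p \mid \Delta(F)$ fix one of these local options and set $\Lambda_\q = \O_\q^3$ for $\q \nmid \Delta(F)$. Gluing via the local-global correspondence produces a global $\O_K$-lattice $\Gamma \subset \O_K^3$ containing $\bm{x}$, and multiplicativity of the determinant over primes gives
\[
\det(\Gamma) \;\gg_K\; \prod_{\p \mid \Delta(F)} \mathrm{N}(\p)^{\alpha_3 - (\alpha_1 + \alpha_2)/2} \;=\; \frac{\mathrm{N}(\Delta(F))}{\mathrm{N}(\Delta_0(F))^{3/2}},
\]
while the number of possible $\Gamma$ is at most $\prod_{\p \mid \Delta(F)} O(v_\p(\Delta(F)) + 1) = O(\tau(\Delta(F)))$.

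The principal difficulty lies in the local case analysis. The delicate situation is when two monomials $u_i \pi^{\alpha_i} y_i^2$ carry the same minimal valuation: the equation reduces to a congruence $a y^2 \equiv b z^2 \pmod{\pi^N}$, which (using $\cha(K) \neq 2$ to invoke Hensel's lemma) either splits into two sublattice branches corresponding to the two square roots of $-b/a$ in $\O_\p^\times$, or forces $(y,z)$ to lie in a higher power of $\p$. One must carry this out in each valuation pattern to confirm both that the index adds up to $\mathrm{N}(\p)^{\alpha_3 - (\alpha_1+\alpha_2)/2}$ and that the total number of branches is $O(v_\p(\Delta(F)) + 1)$; a minor secondary point is that indices are defined only up to units, which affects the implicit constant but not the stated bound.
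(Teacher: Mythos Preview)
Your approach is essentially identical to the paper's: local diagonalization over $\O_\p$ for each $\p\mid\Delta(F)$, a case split on the valuations $\alpha_i+2f_i$ (the paper's two cases are precisely whether or not $\alpha_3\le\min_{i=1,2}(\alpha_i+2\xi_i)$), Hensel lifting to resolve the square-root congruence into two branches, and then gluing the local lattices into a global $\Gamma$. One caution: the step $\prod_{\p\mid\Delta(F)}O(v_\p(\Delta(F))+1)=O(\tau(\Delta(F)))$ is only valid if the implicit local constant is exactly $1$---the paper obtains the sharp local bound $M\le\alpha_3+1\le v_\p(\Delta(F))+1$, and you should make sure your case analysis yields this without an extra multiplicative constant, else the product picks up an unwanted factor $C^{\omega(\Delta(F))}$.
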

\begin{proof}
    If $K$ is a number field, this is Corollary~4.6 of \cite{browning_S-J}, so we may assume that $\cha(K)>2$ from now on. Let $\p\subset \O_K$ be a prime such that $\p\mid \Delta(F)$ and $\pi$ a uniformizer of $\O_\p$. As in the proof of \cite[Lemma 4(b)]{broberg2001rational}, we may diagonalize $F$ and assume that it is given by 
    \[
F(\bm{x})=\varepsilon_1\pi^{\alpha_1}x_1^2+\varepsilon_2\pi^{\alpha_2}x_2^2+\varepsilon_3\pi^{\alpha_3}x_3^2,
    \]
    where $\alpha_1\geq \alpha_2\geq\alpha_3\geq 0$ and $\varepsilon_i$ are units.
    In particular, if we set $a_\p=\nu_\p(\Delta(F))$ and $b_\p=\nu_\p(\Delta_0(F))$, then $a_\p=\alpha_1+\alpha_2+\alpha_3$ and $b_\p=\alpha_1+\alpha_2$. We now claim that if $\bm{x}\in \O_\p^3$ satisfies $F(\bm{x})=0$, then there are $\O_\p$-lattices $L_1,\dots, L_M$ with $M\leq \alpha_3+1$ and $\det(L_i)\geq \mathrm{N}(\p)^{(2\alpha_3-\alpha_2-\alpha_1)/2}=\mathrm{N}(\p)^{(2a_\p-3b_\p)/2}$. The statement will then follow upon letting $\Gamma$ to be one of the lattices such that $\Gamma_\p= L_i$ for some $1\leq i \leq M$ and $\p\mid \Delta(F)$. 

    Suppose that $x_i=\pi^{\xi_i}u_i$, where $u_i\in \O_\p^\times$ for $i=1,2$. Then if $F(\bm{x})=0$, we must have 
    \begin{equation}\label{Eq: ProofLatticeQuadForm}
        \varepsilon_1u_1^2\pi^{\alpha_1+2\xi_1}+\varepsilon_2u_2^2\pi^{\alpha_2+2\xi_2}\equiv 0 \:(\pi^{\alpha_3}).
    \end{equation}
    We now consider two cases. First, let us assume that $\alpha_3\leq \min_{i=1,2}\{\alpha_i+2\xi_i\}$. Then if we set 
    \[
    L_1=\{\bm{x}\in\O_\p^3\colon x_i\in (\p\O_\p)^{\max\{0, \lceil \frac{\alpha_3-\alpha_i}{2}\rceil\}}, i=1,2\},
    \]
    it is clear that $L_\p$ is an $\O_\p$-lattice of rank 3. We must have $\bm{x}\in L_1$ and in addition \begin{align*}
        \det(L_1)&\geq \mathrm{N}(\p)^{\max\{0, \lceil \frac{\alpha_3-\alpha_1}{2}\rceil\}+\max\{0, \lceil \frac{\alpha_3-\alpha_2}{2}\rceil\}}\\
        &\geq \mathrm{N}(\p)^{(2\alpha_3-\alpha_1-\alpha_2)/2},
    \end{align*}
    which is satisfactory. 

    Let us now assume that $\alpha_3>\min_{i=1,2}\{\alpha_i+2\xi_i\}$. In this case \eqref{Eq: ProofLatticeQuadForm} implies that $\alpha_1+2\xi_1=\alpha_2+2\xi_2=\eta$, say. Moreover, \eqref{Eq: ProofLatticeQuadForm} also gives $(u_1/u_2)^2\equiv - \varepsilon_2/\varepsilon_1 \:(\pi^{\alpha_3-\eta})$. As $\cha(K)\neq 2$, Hensel's lemma implies that if this equation is solvable, that there are $r_1,r_2 \in (\O_\p/ \p^{\alpha_3-\eta}\O_\p)^{\times}$ such that $u_1\equiv r_iu_2 \:(\p^{\alpha_3-\eta})$. In particular, $\bm{x}$ must satisfy 
    \[
    x_i\equiv 0\: (\p^{\xi_i})\quad\text{and}\quad x_1\pi^{-\xi_1}\equiv r_ix_2\pi^{-\xi_2}\:(\p^{\alpha_3-\eta}).
    \]
    These conditions define an $\O_\p$-lattice of determinant $\mathrm{N}(\p)^{\alpha_3+\xi_1+\xi_2}\geq \mathrm{N}(\p)^{(2\alpha_3-\alpha_1-\alpha_2)/2}$ which is sufficient. Moreover, taking into account the possible values that $\xi_1$ and $\xi_2$ can take, we see that are at most $\alpha_3+1$ possible lattices in total.
\end{proof}

\begin{corollary}\label{Cor: ConicsDisc}
Suppose that $\cha(K)\neq 2$. Let $\Delta(F)$ and $\Delta_0(F)$ be the fractional ideals in $K$ generated by the discriminant and the $2\times 2$ minors of $F$. Then \[
N_F(\bm{R}_1,\bm{R}_2,\bm{R}_3) \ll_\varepsilon \left(1+\frac{|\bm{R}_1||\bm{R}_2||\bm{R}_3| \mathrm{N}(\Delta_0(F))^{3/2}}{\mathrm{N}(\Delta(F))}\right)^{1/3}\tau(\Delta(F)).
\]
\end{corollary}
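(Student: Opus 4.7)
Set $R := |\bm{R}_1||\bm{R}_2||\bm{R}_3|$ and $D := \mathrm{N}(\Delta(F))/\mathrm{N}(\Delta_0(F))^{3/2}$, so the goal is the bound $N_F(\bm{R}_1,\bm{R}_2,\bm{R}_3) \ll (1+(R/D)^{1/3})\tau(\Delta(F))$. We may assume $F$ is non-singular, for otherwise $\Delta(F)=0$ and the right-hand side is infinite. The strategy is to combine Lemma~\ref{Le: latticeQuadFrom} with Proposition~\ref{Prop: UniformConics}: by Lemma~\ref{Le: latticeQuadFrom}, every $\bm{x}\in\O_K^3$ with $F(\bm{x})=0$ lies in at least one of $O(\tau(\Delta(F)))$ sublattices $\Gamma\subset\O_K^3$ with $\det(\Gamma)\gg D$, so it suffices to prove, uniformly in $\Gamma$,
\[
\#\{[\bm{x}]\in\PP^2(K)\colon \bm{x}\in\Gamma\cap L(\bm{R}_1,\bm{R}_2,\bm{R}_3),\, F(\bm{x})=0\} \ll 1+(R/D)^{1/3},
\]
and then sum over the $O(\tau(\Delta(F)))$ lattices.

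For a fixed $\Gamma$, I would mimic the rescaling trick in the proof of Proposition~\ref{Prop: UniformConics}. Choose $\gamma_1,\gamma_2,\gamma_3\in K$ with $|\gamma_i|_\nu\asymp \prod_{j\neq i}R_{j,\nu}$ (via Lemma~\ref{lem.good_units}) and put $\Gamma' = \{(\gamma_1 x_1,\gamma_2 x_2,\gamma_3 x_3)\colon\bm{x}\in\Gamma\}\subset K^3$. Under the diagonal substitution $y_i=\gamma_i x_i$ the box $L(\bm{R}_1,\bm{R}_2,\bm{R}_3)$ becomes a cube of side $\asymp\prod_j R_{j,\nu}$ at each archimedean place, $F$ becomes a non-singular form $G\in K[\bm{y}]$, and $\det(\Gamma')\asymp R^2\det(\Gamma)\gg R^2 D$. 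Apply Lemma~\ref{Le: GoodLattice} to $\Gamma'$ to obtain a free $\O_K$-lattice $\Lambda'\supseteq\Gamma'$ of comparable determinant with basis $\bm{b}_1,\bm{b}_2,\bm{b}_3$ and successive minima $\lambda_1\leq\lambda_2\leq\lambda_3$. Expanding $\bm{y}=z_1\bm{b}_1+z_2\bm{b}_2+z_3\bm{b}_3$ with $z_i\in\O_K$ and invoking property (iv) of Lemma~\ref{Le: GoodLattice}, the cube constraint on $\bm{y}$ translates into box constraints $|z_i|_\nu\ll \prod_j R_{j,\nu}/\lambda_i^{d_\nu}$. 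The archimedean volume product of these $\bm{z}$-boxes satisfies
\[
\prod_{i=1}^3 \prod_{\nu\mid\infty} \frac{\prod_j R_{j,\nu}}{\lambda_i^{d_\nu}} = \frac{R^3}{(\lambda_1\lambda_2\lambda_3)^{d_K}} \asymp \frac{R^3}{\det\Lambda'} \ll \frac{R}{D}.
\]

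Now $G(\bm{y})=0$ becomes $H(\bm{z})=0$ for a non-singular ternary quadratic form $H$, which after clearing denominators lies in $\O_K[\bm{z}]$. Since the composite change of variables is $K$-linear, projective solutions $[\bm{x}]$ of $F=0$ with $\bm{x}\in\Gamma\cap L(\bm{R}_1,\bm{R}_2,\bm{R}_3)$ correspond, up to bounded multiplicity, to projective solutions $[\bm{z}]$ of $H=0$ inside the transformed box. Proposition~\ref{Prop: UniformConics} applied to $H$ with these new box sizes then yields the required $\ll 1+(R/D)^{1/3}$ per lattice, and summing over the $O(\tau(\Delta(F)))$ lattices gives the claimed inequality. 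The main obstacle I anticipate is the careful bookkeeping of the two successive changes of variables, in particular verifying that the projective point count is preserved up to a bounded constant; all the necessary ideas are already present in the proof of Proposition~\ref{Prop: UniformConics}, so no new technical input beyond Lemmas~\ref{Le: latticeQuadFrom} and~\ref{Le: GoodLattice} should be required.
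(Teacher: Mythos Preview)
Your proposal is correct and follows essentially the same approach as the paper: apply Lemma~\ref{Le: latticeQuadFrom} to cover the zeros of $F$ by $O(\tau(\Delta(F)))$ lattices of large determinant, rescale each lattice by the diagonal map $(\gamma_1,\gamma_2,\gamma_3)$ to convert the lopsided box into a cube, pass to a good basis via Lemma~\ref{Le: GoodLattice}, and apply Proposition~\ref{Prop: UniformConics} in the new coordinates. The paper carries out exactly these steps with only notational differences, and your concern about preserving the projective count is harmless since the composite change of variables is $K$-linear and hence bijective on $\PP^2(K)$.
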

\begin{proof}
    Let $\Gamma$ be one of the lattices from Lemma~\ref{Le: latticeQuadFrom} with $\det(\Gamma)\gg \mathrm{N}(\Delta(F))/\mathrm{N}(\Delta_0(F))^{3/2}$ and choose $\gamma_1, \gamma_2,\gamma_3\in K$ such that $|\gamma_i|_\nu \asymp \prod_{j=1,2,3, j\neq i}R_{j,\nu}$. Put 
    $L=\{(\gamma_1x_1,\gamma_2x_3,\gamma_3x_3)\in K^3\colon (x_1,x_2,x_3)\in \Gamma\}$, so that $L$ is an $\O_K$-lattice of determinant $R^2\det(\Gamma)$. Moreover, note that if $\bm{x}\in \Gamma$ satisfies $|x_i|_\nu \leq R_{i,\nu}$, then $|\gamma_ix_i|_\nu \ll R_{1,\nu}R_{2,\nu}R_{3,\nu}$. 

    Let $\Gamma'$ be the lattice from Lemma~\ref{Le: GoodLattice} such that $L\subset \Gamma'$ with basis $\bm{b}_1,\bm{b}_2,\bm{b}_3$. Suppose that $\bm{x}\in \Gamma$ satisfies $F(\bm{x})=0$ and write $\bm{x}=y_1\bm{b}_1+y_2\bm{b}_2+y_3\bm{b}_3$. Then according to (iv) of Lemma~\ref{Le: GoodLattice} we must have $|y_i|_\nu \ll |\bm{x}|_\nu\lambda_i^{-d_\nu}$. In particular, it follows from Lemma~\ref{lem.minkowskis_second_theorem} and Proposition~\ref{Prop: UniformConics} that 
    \begin{align*}
    \#\left\{\bm{y}\in \PP^2(K)\colon \begin{array}{l}F(y_1\bm{b}_1+y_2\bm{b}_2+y_3\bm{b}_3)=0,\\  |y_i|_\nu \ll R_{1,\nu}R_{2,\nu}R_{3,\nu}\lambda_i^{d_\nu}\end{array}\right\} 
    &\ll 1+\left(\frac{|\bm{R}_1||\bm{R}_2||\bm{R}_3|^3}{(\lambda_1\lambda_2\lambda_3)^{d_K}}\right)^{1/3}\\
    &\ll 1+ \left(\frac{|\bm{R}_1||\bm{R}_2||\bm{R}_3|}{\det(\Gamma)}\right)^{1/3}\\
    &\ll 1+\left(\frac{|\bm{R}_1||\bm{R}_2||\bm{R}_3|\mathrm{N}(\Delta_0(F))^{3/2}}{\mathrm{N}(\Delta(F))}\right)^{1/3}.    
    \end{align*}   
    As every $\bm{x}$ with $F(\bm{x}) = 0$ is contained in at most $O(\tau(\Delta(F)))$ different lattices $\Gamma$ as above, the result follows.
\end{proof}
In addition we will at some point require the following result, which was proved for number fields by Broberg~\cite[Proposition 7]{broberg2001rational}.
\begin{lemma} \label{lem.quadratic_lemma_for_dp4_broberg} Let $q\in \O_K[x_1,x_2,x_3]$ be a non-singular quadratic form such that $q(0, x_2,x_3)$ is also non-singular. Let $\bm{R}\in \RR_{\geq 1}^{s_K}$ and $R\geq 1$ be given. Then 
\[
\#\{\bm{x}\in (L(\bm{R})\times \O_K^2)\cap Z_2\colon \norm{x_i}\leq R, i=2,3, q(\bm{x})=0\}\ll_K 1+|\bm{R}|(|\bm{R}|\norm{q}R)^\varepsilon.
\]
    
\end{lemma}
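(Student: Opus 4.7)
The plan is to make a twofold completion of the square that reduces $q(\bm{x})=0$ to a Pell-type equation in two new variables, for which Lemma~\ref{Le: QuadRepI} yields a near-optimal count. Since $\cha(K)\neq 2$ and $q(0,x_2,x_3)$ is non-singular, after swapping $x_2,x_3$ if necessary and, if both diagonal coefficients vanish, applying the change of variables $(x_2,x_3)\mapsto (x_2+x_3,x_2-x_3)$, I may assume that the coefficient $A$ of $x_2^2$ in $q$ is non-zero. Such harmless transformations inflate all norms by $O(\norm{q}^{O(1)})$, which is absorbed into the $\varepsilon$-powers; the small loss in integrality introduced in the last case is harmless since I am after an upper bound.

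Write $q = Ax_2^2 + (Bx_3+Dx_1)x_2 + (Cx_3^2 + Ex_1 x_3 + Fx_1^2)$ and set $y = 2Ax_2 + Bx_3 + Dx_1$. A direct computation yields
\[
4A\, q(x_1,x_2,x_3) = y^2 - G(x_1,x_3),
\]
where $G(x_1,x_3) = \Delta x_3^2 + (2BD - 4AE)x_1x_3 + (D^2 - 4AF)x_1^2$ is a binary form in $(x_1,x_3)$ with $\Delta = B^2-4AC\neq 0$ and discriminant a non-zero scalar multiple of $\disc(q)$. A second completion of the square, this time in $x_3$ via $w = 2\Delta x_3 + (2BD-4AE)x_1$, gives $4\Delta\,G(x_1,x_3) = w^2 - Nx_1^2$ for some non-zero $N\in \O_K$. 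Hence $q(\bm{x})=0$ is equivalent to
\[
Q(w,y) := w^2 - 4\Delta y^2 = Nx_1^2,
\]
where $Q$ is a binary quadratic form over $\O_K$ whose discriminant $16\Delta$ is non-zero; in particular $Q$ is without repeated roots.

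Since the map $(x_2,x_3)\mapsto (w,y)$ is injective, it suffices to bound the number of integer triples $(x_1,w,y)\in\O_K^3$ satisfying the equation above under the size constraints $|x_1|_\nu \leq R_\nu$ and $|w|_\nu,|y|_\nu \ll \norm{q}^{O(1)} R$ at each $\nu\mid\infty$. For each fixed $x_1\in L(\bm{R})\setminus \{0\}$ one has $\mathrm{N}(Nx_1^2) \ll \norm{q}^{O(1)}|\bm{R}|^2$, and Lemma~\ref{Le: QuadRepI} (applicable because $\cha(K)\neq 2$ and $Q$ is separable) bounds the number of such $(w,y)$ by $O((|\bm{R}|\norm{q}R)^\varepsilon)$. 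Summing over $x_1\in L(\bm{R})$, which by Lemma~\ref{Le: Number.OK.Points} has $O(\max\{1,|\bm{R}|\})$ elements, yields a total contribution of $O(|\bm{R}|(|\bm{R}|\norm{q}R)^\varepsilon)$. The solutions with $x_1=0$ correspond to primitive zeros of the non-singular binary form $q(0,x_2,x_3)$, which lie on at most two lines of $\PP^1$ and therefore contribute only $O(R^\varepsilon)$ primitive representatives by Lemma~\ref{Le: NumberUnits}; this is absorbed into the additive $1$ in the statement.

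The main technical point is the algebraic set-up of the two completions of the square together with the verification that the resulting binary form $Q$ is separable (a consequence of the non-singularity of $q(0,\cdot,\cdot)$); once this is in place, the counting step is a direct application of the Pell-type bound of Lemma~\ref{Le: QuadRepI}, and the critical saving of a factor of $R$ over the trivial estimate comes from the fact that $Q(w,y)=Nx_1^2$ has $O(R^\varepsilon)$ rather than $O(R)$ solutions for each fixed $x_1\neq 0$.
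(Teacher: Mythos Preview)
Your argument is correct and follows essentially the same route as the paper's proof. Both arguments perform a change of variables in $(x_2,x_3)$ (you via two successive completions of the square, the paper via diagonalising $q(0,x_2,x_3)$ and then absorbing the $x_1$--cross terms) to reduce $q(\bm{x})=0$ to an equation of the form $\alpha L_1^2+\beta L_2^2=\gamma x_1^2$; for each fixed $x_1\neq 0$ the number of solutions is then bounded via a divisor count over a quadratic extension (you invoke Lemma~\ref{Le: QuadRepI}, which packages exactly this step, while the paper inlines the factorisation over $K(\sqrt{\beta})$ and applies Lemma~\ref{Le: Divisor.bound.elements} directly). One minor imprecision: since $y$ and $w$ involve $x_1$ linearly, the correct placewise bound is $|w|_\nu,|y|_\nu\ll \norm{q}^{O(1)}\max\{R,R_\nu\}$ rather than $\norm{q}^{O(1)}R$; this is harmless because the extra factor of $|\bm{R}|$ disappears into the $\varepsilon$-power in Lemma~\ref{Le: QuadRepI}.
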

\begin{proof}
    As $q(0,x_2,x_3)$ is non-singular, we can find a matrix $P\in \GL_2(K)$ with entries $p_{ij}\in \O_K$ satisfying $\norm{p_{ij}}\ll \norm{q}^B$ for some absolute constant $B>0$ and such that $q(x_1, P(x_2,x_2))$ takes the shape 
    \[
    \alpha L_1(x_1,x_2,x_3)^2+\beta L_2(x_1,x_2,x_3)^2-\gamma x_1^2,
    \]
with non-zero coefficients $\alpha, \beta, \gamma$ and $L_i$ are linear forms such that $L_1(0, x_2,x_3)$ and \break $L_2(0,x_2,x_3)$ are not proportional. Upon multiplying by a suitable constant, we may assume that $\alpha,\beta,\gamma$ lie in $\O_K$ and satisfy $\norm{\alpha},\norm{\beta}, \norm{\gamma} \ll \norm{q}^B$, with possibly a new value of $B$. If we set $L=K(\sqrt{\beta})$, then the equation $q(x_1,P(x_2,x_3))=0$ becomes
\[
L_3(x_1,x_2,x_3)L_4(x_1,x_2,x_3)=\gamma x_1^2,
\]
where $L_3,L_4$ are linear forms with coefficients in $\O_L$. Note that if $x_1=0$, then there are $O(1)$ possibilities for $(0,x_2,x_3)\in Z_3$ such that $q(0,x_2,x_3)=0$, so we may suppose $x_1\neq 0$ from now on. Set $z_2=L_3(x_1,x_2,x_3)$ and $z_3=L_4(x_1,x_2,x_3)$ and observe that the assignment is injective, as $q(0,x_2,x_3)$ is non-singular. We then have $|z_i|_\nu \ll \norm{q}^B \max\{R_\nu, R\}\ll \norm{q}^B|\bm{R}|R$, so that Lemma~\ref{Le: Divisor.bound.elements} implies that for $x_1$ fixed, there are $O((\norm{q}^B|\bm{R}|R\mathrm{N}(\gamma x^2_1))^\varepsilon)=O((\norm{q}|\bm{R}|R)^\varepsilon)$ possible $(z_2, z_3)$ such that $z_2z_3=\gamma x_1^2$. As there are $O(|\bm{R}|)$ available $x_1\in L(\bm{R})$ by Lemma~\ref{Le: Number.OK.Points}, which is sufficient. 
\end{proof}
\section{Rational points on smooth genus 1 curves}
Let $E\subset \PP^n$ be a smooth genus 1 curve of degree $d$ and for $B\geq 1$ define 
\[
N_E(B)\coloneqq \#\{x\in E(K)\colon H(x)\leq B\},
\]
where $H\colon \PP^n(K)\to \RR_{>0}$ denotes the usual height function. Our goal of this section is to produce a \emph{uniform} upper bound for $N_E(B)$ and prove Proposition \ref{Prop.UpperBoundEllCurve}. Let $\mathscr{D}_E$ and $\mathscr{C}_E$ denote the minimal discriminant and the conductor of $E$, which are either ideals or divisors depending on whether $K$ is a number field or a function field. The main input is the \emph{rank growth hypothesis (RGH)} stated in Conjecture \ref{Conj: RGH} for elliptic curves $E$:
\begin{equation}\label{Eq.RGH}
r_E = o(\log \mathrm{N} (\mathscr{C}_E)) \quad\text{as }\mathrm{N} (\mathscr{C}_E)\to \infty,
\end{equation}
where $r_E$ denotes the rank of $E$. In his remarkable work, Brumer~\cite[Proposition 6.9]{Brumerelliptic} proved  Conjecture\eqref{Conj: RGH} when $\cha(K)>3$.
\begin{prop}
    Suppose $\cha(K)>3$. Then \eqref{Eq.RGH} holds with the estimate
    \[
    r_E \ll_K \frac{\log \mathrm{N}(\mathscr{C}_E)}{\log \log \mathrm{N}(\mathscr{C}_E)}.
    \]
\end{prop}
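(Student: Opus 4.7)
The plan is to execute Brumer's explicit-formula argument, which in positive characteristic rests on entirely unconditional inputs once $\cha(K) > 3$.

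The first ingredient is a rationality/degree statement. By Grothendieck's theorem on $L$-functions over function fields, $L(E,s) = P(q^{-s})$ for a polynomial $P(u) \in \CC[u]$, and the Grothendieck--Ogg--Shafarevich formula identifies its degree as $N = \deg \mathscr{C}_E + 4(g_C-1)$, where $g_C$ is the genus of the smooth projective curve associated to $K$. In particular $N \asymp_K \log \mathrm{N}(\mathscr{C}_E)$. The second ingredient is the Riemann hypothesis: by Deligne's proof of the Weil conjectures, every reciprocal root $\beta$ of $P$ satisfies $|\beta| = q$, so that every zero of $L(E,s)$ lies on $\Re(s) = 1$. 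The third ingredient is the inequality $r_E \leq \mathrm{ord}_{s=1} L(E,s)$; this is the half of the BSD conjecture known unconditionally over function fields by work of Tate, Artin--Tate, Milne, and (in full generality) Kato--Trihan. Combining these, it suffices to prove
\[
\mathrm{ord}_{s=1} L(E,s) \ll_K \frac{N}{\log N}.
\]

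To produce the $1/\log N$ saving over the trivial bound $\mathrm{ord}_{s=1} L(E,s) \leq N$, I would apply the Weil explicit formula with an appropriate test function $f$. Concretely, after writing $P(u) = \prod_{j=1}^N (1 - \beta_j u)$ with $|\beta_j| = q$, and logarithmically differentiating, one obtains an identity of the shape
\[
\sum_{j=1}^N f\!\left(\tfrac{\beta_j}{q}\right) = N\, \hat f(0) - \sum_{v} (\log \mathrm{N}(v)) \sum_{n \geq 1} \frac{a_{v,n}}{\mathrm{N}(v)^n}\, \hat f(n \log \mathrm{N}(v)),
\]
where $a_{v,n} = \beta_{v,1}^n + \beta_{v,2}^n$ are the traces of Frobenius at the place $v$. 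Choose $f$ to be non-negative on the unit circle with $f(1) = 1$, supported in a window of bandwidth $T$ in the frequency variable, so that only places of degree $\leq T$ contribute on the right. The trivial bound $\mathrm{ord}_{s=1} L(E,s) \leq \sum_j f(\beta_j/q)$ together with the Hasse bound $|a_{v,n}| \leq 2\mathrm{N}(v)^{n/2}$ and the prime number theorem in $\FF_q[t]$ (or $\O_K$) then yield
\[
\mathrm{ord}_{s=1} L(E,s) \ll \frac{N}{T} + \text{(error from the sum over places)}.
\]
Optimising $T \asymp \log N$ makes the two contributions balance, which delivers the bound $r_E \ll N/\log N$ as claimed.

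The bulk of the novelty of the statement lies in having all four ingredients available in the positive-characteristic setting; the hardest technical step is the optimisation in Step 4, which requires a careful choice of test function (e.g.\ a Fej\'er-type kernel in the degree variable) and an honest accounting of the error from the sum over places. This is essentially the analogue of Mestre's argument in the number-field case, but here it is unconditional because Deligne's theorem substitutes for GRH and the function-field half of BSD substitutes for Mestre's conditional use of the analytic-rank inequality. The isotrivial case (where $E$ is defined over a finite extension of the constant field) requires a separate, easy treatment, since the $L$-function then factors through Hecke characters of the base curve and the bound is immediate.
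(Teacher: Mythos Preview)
The paper does not actually prove this proposition; it simply attributes the result to Brumer~\cite[Proposition 6.9]{Brumerelliptic} and states it without argument. Your proposal goes well beyond what the paper does by sketching the proof, and the sketch is structurally faithful to Brumer's method: rationality and the degree formula for $L(E,s)$ via Grothendieck--Ogg--Shafarevich, the Riemann hypothesis via Deligne, the inequality $r_E \le \mathrm{ord}_{s=1} L(E,s)$ (known unconditionally over function fields), and then an explicit-formula argument with a Fej\'er-type kernel optimised at bandwidth $T \asymp \log N$.

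A few minor points of precision. The displayed explicit formula you wrote is phrased in the continuous (number-field) idiom; over function fields it is more natural and cleaner to work directly with the power-sum identities $\sum_j \beta_j^n = \sum_{m\deg v = n} (\deg v)\,\mathrm{tr}(\mathrm{Frob}_v^m)$ coming from logarithmic differentiation of $P(u)$, and Brumer's actual kernel and error analysis are set up this way. The attribution of the rank inequality to Kato--Trihan is slightly off: Kato--Trihan concerns the full BSD formula under a finiteness hypothesis on Sha, whereas the inequality $r_E \le \mathrm{ord}_{s=1} L(E,s)$ already follows from Tate's work on the Tate conjecture for the associated elliptic surface (injectivity of the cycle class map). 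Finally, the isotrivial case is not quite as trivial as you suggest, but it is handled separately in Brumer's paper. None of these affect the correctness of your outline; they are refinements you would make in writing out the details.
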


We will prove Proposition \ref{Prop.UpperBoundEllCurve} in several steps. Let us first show how \eqref{Eq.RGH} implies a uniform upper bound for $N_E(B)$ when $E$ is an elliptic curve given in Weierstrass form 
\begin{equation}\label{Eq:Weierstrass}
 E\colon zy^2=x^3+axz^2+bz^3  
\end{equation}
with $a,b\in \O_K$. By abuse of notation we shall also write
\[
E(x,y,z)=zy^2-(x^3+axz^2+bz^3). 
\]
Let $h\colon E(K)\to \RR_{\geq 0}$ be the canonical height of $E$ and define the height of $E$ to be 
\[
H_E\coloneqq \prod_{\nu\in \Omega_K}\max\{1, |a|_\nu^{1/2},|b|_\nu^{1/3}\}.
\]
Provided $\cha(K)\neq 2,3$, it follows from work of Zimmer \cite[p. 40]{ZimmerHeight} that 
\begin{equation}\label{Eq:CanWeilheight}
h(P)= \log H(P)+O(\log H_E)
\end{equation}
for $P\in E(K)$. The Mordell--Weil group $E(K)$ is a finitely generated abelian group of rank $r_E$ and so any point $P\in E(K)$ may be uniquely written as 
\[
P=T+\sum_{i=1}^{r_E}n_iT_i,
\]
where $T\in E(K)_{\text{tors}}$ is a torsion point and $T_1,\dots, T_{r_E}$ are generators of $E(K)$. In this case we have 
\[
h(P)= h\left(\sum n_iT_i\right)=Q_E(n_1,\dots, n_{r_E})
\]
where $Q_E\in \ZZ[x_1,\dots, x_{r_E}]$ is a positive-definite quadratic form. By work of Mazur for $K=\QQ$ \cite{MazurBound}, Merel for number fields \cite{MerelBound} and Levin for non-isotrivial elliptic curves over function fields \cite{LevinBound}, we know $E(K)_{\text{tors}}\ll 1$, where the implied constant only depends on the degree of the number field in characteristic 0 or the genus of the function field in positive characteristic. For isotrivial curves over function fields, it is explained in~\cite{ulmer_lecture_notes} that we always obtain the bound $E(K)_{\text{tors}}\ll q^2$, where $q$ is the cardinality of the field of constants $K$. It thus follows from \eqref{Eq:CanWeilheight} that for a suitable absolute constant $C>0$ we have 
\[
N_E(B) \ll_K \#\left\{(n_1,\dots, n_{r_E})\in \ZZ^{r_E}\colon Q_E(n_1,\dots, n_{r_E})\leq C\log(H_E B)\right\}. 
\]
Lemma~4 of Heath-Brown \cite{heath1998counting} implies that 
\begin{equation} \label{eq.upper_bound_smalles_value_quadric}
N_E(B)\ll 1 + (9C\log(H_EB)/B_{0,E})^{r_E/2},
\end{equation}
where 
\[
B_{0,E}=\min\{h(P) \colon h(P) \neq 0\}.
\]
Hindry--Silverman \cite[Corollary 4.2]{HindrySilverman} show that 
\[
B_{0,E} \geq \log (\mathrm{N}(\mathscr{D}_E))\exp(-A \log( \mathrm{N}(\mathscr{D}_E))/ \log(\mathrm{N}(\mathscr{C}_E))),
\]
where $\mathscr{D}_E$ is the minimal discriminant of $E$. Note that strictly speaking Corollary~4.2 of Hindry--Silverman is only stated for number fields. However, it is an immediate consequence of Theorem~4.1 in the same work, which is valid for any global field. For now assume that $\mathrm{N}(\mathscr{C}_E) \geq C(\varepsilon)$ where $C(\varepsilon)$ is a constant chosen sufficiently large so that $r_E \leq \varepsilon \log \mathrm{N}(\mathscr{C}_E)/A$ holds.
Following the analysis of Heath-Brown \cite{heath1998counting} on page 23 word by word, we arrive at the estimate 
\begin{equation}\label{Eq:UpperBoundWeierstrassI}
N_E(B)\ll_\varepsilon (BH_E)^\varepsilon.
\end{equation}
For the case when $\mathrm{N}(\mathscr{C}_E) <C(\varepsilon)$ our treatment differs according to whether $K$ is a function field or a number field.  If $K$ is a number field then there are only finitely many elliptic curves of a given conductor (cf.~\cite[IX.6]{silverman_arithmetic_ell_curves}). Thus we find
\[
\max_{E \colon \mathrm{N}(\mathscr{C}_E) \leq C(\varepsilon)} r_E \ll_{\varepsilon,K} 1,
\]
and
\[
\min_{E \colon \mathrm{N}(\mathscr{C}_E) \leq C(\varepsilon)} B_{0,E}\gg_{\varepsilon,K} 1,
\]
where the lower bound follows from the fact that the quadratic forms that determine the height of a point are positive definite.
Hence~\eqref{eq.upper_bound_smalles_value_quadric} delivers~\eqref{Eq:UpperBoundWeierstrassI} for elliptic curves $E$ with $\mathrm{N}(\mathscr{C}_E) \leq C(\varepsilon)$.

If $K$ is now a function field, then Theorem~6.1 in~\cite{HindryRegulators} shows that unless $H_E = 1$ we have
\[
B_{0,E} \gg_K p^{-2e}\log \mathrm{N}(\mathscr{D}_E),
\]
where $p^e$ denotes the inseparability degree of the field extension $K/\FF_q(j(E))$, which by convention is 1 if $j(E)\in \FF_q$. If $j(E)$ is non-constant, then $e$ is the maximal non-negative integer such that $j(E)\in K^{p^e}$. In particular, we have $p^e\ll \log \mathrm{N}(j(E)) \ll \log \mathrm{N}(\mathscr{D}_E)$. Moreover, since $\log \mathrm{N}(\mathscr{C}_E) \ll_\varepsilon 1$ again implies $r_E \ll_\varepsilon 1$, and noting $\log \mathrm{N}(\mathscr{D}_E) \ll \log H_E$ we obtain~\eqref{Eq:UpperBoundWeierstrassI} by substituting the aforementioned estimates into~\eqref{eq.upper_bound_smalles_value_quadric}. Finally, if $\mathrm{N}(\mathscr{D}_E) = 1$ then $E$ has everywhere good reduction. This implies that $E$ is isotrivial, meaning that its $j$-invariant $j(E)$ is an element of the field of constants  $\FF_q$. 
In particular it follows from the theory of elliptic curves over function fields that $E$ is isomorphic to a twist of a constant elliptic curve over $K$. Since twisting by any non-constant element of $K$ increases
the norm $\mathrm{N}(\mathscr{D}_E)$, which is for example proved in Proposition 5.7.1 in~\cite{connell1999elliptic} for quadratic twists (the other twists follow analogously). We deduce that the number of elliptic curves $E$ with everywhere good reductions is bounded by $O_{\FF_q}(1)$. In particular we have $B_{0,E} \gg_{\FF_q} 1$ for all such curves and thus obtain~\eqref{Eq:UpperBoundWeierstrassI}.

Therefore it only remains to remove the dependence on $H_E$ in~\eqref{Eq:UpperBoundWeierstrassI} to complete the proof of Proposition \ref{Prop.UpperBoundEllCurve} for elliptic curves in Weierstrass form. To do so, we reprove Theorem~4 of Heath-Brown~\cite{HeathBrownCurvesSurfaces} over arbitrary global fields. 
\begin{lemma}\label{Le:SizeCoeffcs}
    Let $F\in \O_K[x_1,x_2, x_3]$ be an irreducible form of degree $d$ whose coefficient vector lies in $Z_{N-1}$, where $N=(d+1)(d+2)/2$. Then  
    \[
    \#\{\bm{x}\in \PP^2(K)\colon F(\bm{x})=0,\, H(\bm{x})\leq B\}\leq d^2 \quad\text{or}\quad \norm{F}\ll B^{dN/s_K}.
    \]
\end{lemma}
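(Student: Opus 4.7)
The plan is to adapt Heath-Brown's global determinant method~\cite{HeathBrownCurvesSurfaces} to work with $\O_K$-representatives in $Z_n$. The starting observation is that, by B\'ezout's theorem, if the irreducible form $F$ has more than $d^2$ projective zeros over $K$, then the space of degree $d$ forms in $\O_K[x_1,x_2,x_3]$ vanishing on those zeros is one-dimensional. Suppose the zeros in question are $P_1, P_2, \dots \in \PP^2(K)$, let $\bm{x}_i \in Z_2$ be representatives chosen via Lemma~\ref{Le: GoodReps}, and let $A$ be the matrix whose rows are $(\bm{x}_i^\alpha)_{|\alpha|=d}$, indexed by the $N$ monomials of degree $d$. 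Then the kernel of $A$ (acting on coefficient vectors) is exactly $K\bm{f}$, where $\bm{f}$ is the coefficient vector of $F$, so $A$ has rank $N-1$.

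Next, I would choose $N-1$ rows spanning the row space, corresponding to say $P_1,\dots,P_{N-1}$, and apply Cramer's rule to the resulting $(N-1)\times N$ submatrix $A'$. The vector $\bm{c} = (c_0,\dots,c_{N-1})$ of signed maximal minors of $A'$ spans the kernel of $A'$ (which is still $K\bm{f}$), so $\bm{c} = \lambda \bm{f}$ for some $\lambda \in K^\times$. Since each entry of $A'$ lies in $\O_K$, so does each $c_j$. From $H(P_i)\leq B$ and the chosen representatives being in $Z_2$, we have $|\bm{x}_i|_\nu \leq \norm{\bm{x}_i} \ll B^{1/s_K}$ for every $\nu\mid\infty$, and hence each monomial of degree $d$ evaluated at $\bm{x}_i$ has size $\ll B^{d/s_K}$ at each infinite place. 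Expanding the determinant defining $c_j$ then gives $|c_j|_\nu \ll_K B^{d(N-1)/s_K}$ at every $\nu\mid\infty$, so that
\[
\norm{\bm{c}}_\infty = \prod_{\nu\mid\infty} \max_j |c_j|_\nu \ll_K B^{d(N-1)}.
\]

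The last step is to pass from $\bm{c}$ back to $\bm{f}$, which is where the ideal-class bookkeeping enters. Since $\bm{c} = \lambda \bm{f}$ and the ideal $(f_0,\dots,f_{N-1})$ equals the chosen class representative $\mathfrak{a}_j$ (because $\bm{f}\in Z_{N-1}$), we have $(c_0,\dots,c_{N-1}) = \lambda \mathfrak{a}_j \subset \O_K$, so $\mathrm{N}(\lambda)\mathrm{N}(\mathfrak{a}_j) = \mathrm{N}((c_0,\dots,c_{N-1}))\geq 1$ and hence $\mathrm{N}(\lambda) \gg_K 1$. The product formula gives $\prod_{\nu\mid\infty} |\lambda|_\nu = \mathrm{N}(\lambda)$, and therefore
\[
\norm{\bm{f}}_\infty = \mathrm{N}(\lambda)^{-1}\norm{\bm{c}}_\infty \ll_K B^{d(N-1)}.
\]
Finally, since $\bm{f}\in Z_{N-1}$ we have $\norm{F} = \norm{\bm{f}} \asymp_K \norm{\bm{f}}_\infty^{1/s_K}$, yielding $\norm{F}\ll_K B^{d(N-1)/s_K} \leq B^{dN/s_K}$, as required.

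The main obstacle I anticipate is the final step: over a general global field one cannot simply normalise the coefficient vector to be primitive, so the scalar $\lambda$ relating $\bm{c}$ and $\bm{f}$ may have negative valuations at primes dividing $\mathfrak{a}_j$. Controlling $\mathrm{N}(\lambda)$ is what uses the $Z_{N-1}$ hypothesis on $\bm{f}$ and the fact that there are only finitely many class-group representatives; this is the subtlety that is not present in Heath-Brown's original work over $\QQ$.
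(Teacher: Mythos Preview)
Your proof is correct and follows essentially the same approach as the paper's: both construct the vector of maximal minors of the monomial matrix, observe via B\'ezout that it is a scalar multiple of $\bm{f}$, and then use the $Z_{N-1}$ hypothesis to control the norm of that scalar. Your treatment of the normalization step is in fact slightly more explicit than the paper's (which simply asserts ``$\norm{F}\ll\norm{G}$ since both $\bm{f}$ and $\bm{g}$ belong to $Z_{N-1}$''), and your exponent $d(N-1)/s_K$ is marginally sharper than the stated $dN/s_K$.
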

\begin{proof}
    The proof is identical to that of Heath-Brown, and so we shall be brief. Write $N=(d+1)(d+2)/2$ and suppose $N_F(B)\geq d^2+1$. Let $\bm{x}_1,\dots, \bm{x}_N\in Z_2$ be such that $H(\bm{x}_i)\leq B$ and $F(\bm{x}_i)=0$. Consider the $(d^2+1)\times N$ matrix $M$ whose $i$th row consists of all possible monomials of degree $d$ formed by the coordinates of $\bm{x}_i$. If $\bm{f}\in Z_{N-1}$ is the coefficent-vector associated to $F$, then we have $M\bm{f}=0$. Moreover, as $\bm{f}\neq 0$, $M$ must have rank at most $N-1$ and so there exists a non-zero solution $\bm{g}\in Z_{N-1}$  constructed out of the minors of $M$, so that $\norm{\bm{g}}\ll B^{dN/s_K}$. Let $G$ be the form of degree $d$ associated to $\bm{g}$. Since the hypersurfaces defined by $F$ and $G$ intersect in $d^2+1$ points, Bezout's theorem implies that $G$ is a multiple of $F$, so that $\norm{F}\ll \norm{G}\ll B^{dN/s_K}$ since both $\bm{f}$ and $\bm{g}$ belong to $Z_{N-1}$. 
\end{proof}
It is clear that $E$ given as in~\eqref{Eq:Weierstrass} is defined by a form whose coefficients define the unit ideal and upon multiplying it by a unit $u\in \O_K$, we may assume that its coefficient vector lies in $Z_{9}$. We then have 
\[
H_E\leq \prod_{\nu\mid\infty}\max\{1,|a|_\nu, |b|_\nu|\} = \prod_{\nu\mid\infty}\max\{|u|_\nu, |ua|_\nu, |ub|_\nu\} \ll \norm{E}^{s_K}.
\]
In particular, Lemma \ref{Le:SizeCoeffcs} implies that either $N_E(B)\leq 9$ or $H_E\ll B^{10}$. The former case is clearly sufficient and if the latter holds, then \eqref{Eq:UpperBoundWeierstrassI} hands us the estimate 
\begin{equation}\label{Eq:UpBoundWeierstrassII}
    N_E(B)\ll B^\varepsilon,
\end{equation}
for any elliptic curve in Weierstrass form where the implied constant only depends on the ground field $K$. 

We will now use \eqref{Eq:UpBoundWeierstrassII} to deduce Proposition \ref{Prop.UpperBoundEllCurve}. First let us suppose that $E\subset \PP^2$ is a smooth genus 1 curve given as the vanishing locus of a non-singular cubic form $G\in \O_K[x, y, z]$. If $N_E(B)=0$, then we clearly have the desired upper bound. If not, we can find $t_0\in E(K)$ with $H(t_0)\leq B$. and use $t_0$ to construct a birational map $\theta\colon \PP^2\to \PP^2$ defined over $K$ transforming $E$ into an elliptic curve $E'$ defined in Weierstrass form. It is clear that $\theta$ has coefficents that are rational functions in the coefficients of $G$ and the coordinates of $t_0$. It thus follows for any $x\in E(K)$ with $H(x)\leq B$ that $H(\theta(x))\leq C H(t_0)^A\norm{G}^A$ for some absolute  constant $C>0$ and hence 
\[
N_E(B)\ll N_{E'}(CB^A\norm{G}^A).  
\]
By Lemma \ref{Le:SizeCoeffcs} we have $N_E(B)\leq 9$ or $\norm{G}\ll B^A$, so that in the latter case \eqref{Eq:UpBoundWeierstrassII} yields
\[
N_E(B)\ll (C\norm{G}^AB^A)^\varepsilon \ll B^\varepsilon,
\]
which is sufficient for the proof of Proposition \ref{Prop.UpperBoundEllCurve} for plane elliptic curves. \\

Next we suppose $E\subset \PP^3$. If $E$ is contained in any plane, say $H\subset \PP^3$, we can find a point $p\in \PP^3(K)\setminus H(K)$ with $H(p)\ll 1$. Projecting away from $p$ gives a map $\varphi\colon\PP^3\setminus\{p\}\to \PP^2(K)$ that restricts to an isomorphism $E\to \varphi(E)\subset \PP^2(K)$. Since $H(p)\ll 1$, it follows that there exists some absolute constant $C>0$ such that $H(\varphi(x))\leq C H(x)$ for any $x\in \PP^3(K)\setminus\{p\}$, so that 
\[
N_E(B)\leq N_{\varphi(E)}(CB)\ll B^\varepsilon,
\]
by what we have shown for planar curves. We may therefore assume that $E$ is not contained in any hyperplane. Suppose there exists $t_0\in E(K)$ with $H(t_0)\leq B$. Then there exists a change of variables $L\colon \PP^3\to \PP^3$ sending $t_0$ to the point $t_1=[1, 0, 0, 0]$, so that ${H(L(x))\ll H(t_0)^A H(x)\ll B^AH(x)}$. Let $E'$ be the image of $E$ under this linear transformation. Any elliptic curve in $\PP^3$ that is not contained in a hyperplane may be defined as the complete intersection of two quadrics. In particular, we may assume that $E'$ is given by 
\begin{equation}\label{Eq:EllInt2Quadrics}
x_0L_1(x_1,x_2,x_3)=q_1(x_1,x_2,x_3)\quad\text{and}\quad x_0L_2(x_1,x_2,x_3)=q_2(x_1,x_2,x_3)
\end{equation}
for some linear forms $L_1,L_2\in\O_K[x_2,x_3,x_4]$ and quadratic forms $q_1,q_2\in\O_K[x_2,x_3,x_4]$. Since $E'$ is non-singular, the Jacobian criterion implies that $L_1$ and $L_2$ are not proportional. In particular, we can eliminate $x_0$ from \eqref{Eq:EllInt2Quadrics} to produce an equation 
\[
L_2(x_1,x_2,x_3)q_1(x_1,x_2,x_3)=L_1(x_1,x_2,x_3)q_2(x_1,x_2,x_3)
\]
which gives a plane elliptic curve $E''$ containing the point corresponding to $L_1(x_1,x_2,x_3)=L_2(x_1,x_2,x_3)=0$. In particular, the map $E'\setminus\{t_1\}\to E''$ defines a birational map $E'\dashrightarrow E''$ which is one-to-one except when $L_1(x_1,x_2,x_3)=L_2(x_1,x_2,x_3)=0$. There are are most $O(1)$ such points and thus 
\[
N_E(B)\ll N_{E''}(CB^A)\ll B^\varepsilon,
\]
again by our estimate for planar curves. \\

The only remaining cases are when $n>3$, which we shall reduce to the case $n=3$ shortly. So suppose $E\subset \PP^n$ is a non-singular genus 1 curve of degree $d$. If $E$ is contained in a hyperplane $H\subset \PP^n$, we can find a point $p\in \PP^n(K)\setminus H(K)$ such that $H(p)\ll 1$. Projection away from $p$ defines an isomorphism $\varphi\colon E\to E'\subset \PP^{n-1}$. Moreover, it is easy to see that $H(p)\ll 1$ implies $H(\varphi(x))\ll H(x)$. We may therefore assume that $E$ is not contained in any hyperplane from now on. Let $S(E)\subset \PP^n$ denote the secant variety of $E$, which by definition is the closure of all lines that meet $E$ in two points. It is well known that $\dim (S(E))\leq 3$, $S(E)$ is irreducible and that projection away from a point $p\in \PP^n$ restricts to a closed immersion of $E$ if and only if $p\not\in S(E)$. Theorem~4.3 of \cite{terracini} gives a formula for the degree of $S(E)$ that only depends on $d$ and $n$, and since $n>\dim(S(E))$ we may use \eqref{Eq:UnifVariety} to deduce the existence of a point $p\in \PP^n(K)\setminus S(E)(K)$ such that $H(p)\ll_{d,n,K} 1$ and $\varphi \colon \PP^n\setminus \{p\}\to \PP^{n-1}$ restricts to an isomorphism $E\to E'\subset \PP^{n-1}$, where $E'=\varphi(E)$ is a non-singular genus one curve of degree at most $d$. Since $H(p)\ll 1$, we again have $H(\varphi(x))\ll_{d,n,K} H(x)$. Regardless of whether $E$ is contained in a hyperplane or not, we have shown the existence of an elliptic curve $E'\subset \PP^{n-1}$ of degree at most $d$ such that 
\[
N_E(B)\leq N_{E'}(CB)
\]
for some constant $C$ only depending on $d,n$ and $K$. As $n>3$, it is clear that we continue this process until $E'\subset \PP^3$, a case that we already dealt with. Therefore, we have completed our proof of Proposition \ref{Prop.UpperBoundEllCurve}.
 \begin{remark}
     It is well known that any elliptic curve over a global field $K$ with $\cha(K)\neq 2,3$ can be put into short Weierstrass form \cite[Chapter III, Proposition 3.1]{silverman_arithmetic_ell_curves}. This is usually proved via the Riemann-Roch theorem. However, for our applications it is important that we keep track of how the height of points is affected, which we could achieve by working with projections instead. 
 \end{remark}
\section{Conic bundles}\label{Sec.conicbundles}
The aim of this section is to prove Theorem~\ref{Th: TheConicBundleTheorem}.
That is, we shall prove that  if $\cha(K) \neq 2$ and $X$ is a del Pezzo surface of degree $d=4$ or $d=5$ admitting a conic bundle structure, then we have
 \[
    N_U(B) \ll B^{1+\varepsilon}.
\]
From now on until the end of this work, all implied constants are allowed to depend on the del Pezzo surface under consideration.
\subsection{del Pezzo surfaces of degree $5$ with a conic bundle structure}
According to Section 5 of~\cite{frei_loughran_sofos} a del Pezzo surface of degree $5$ may be realised as a nonsingular hypersurface defined by
\begin{equation} \label{eq.dp5_conic_defn}
sQ_1(x_0,x_1,x_2) + tQ_2(x_0,x_1,x_2) = 0
\end{equation}
inside $\mathbb{P}^1 \times \PP^2$. At the beginning of Section $5$ in~\cite{frei_loughran_sofos} the authors make the assumption that the field $K$ is perfect. This is not necessary in order to show that the $X$ can be realised as described above, in particular their proof for~\eqref{eq.dp5_conic_defn} goes through as long as $\cha(K) \neq 2$.
Since $X$ is nonsingular the determinant 
\[
\Delta(s,t) = \det(sM_1+tM_2)
\]
defines a separable cubic form in $(s,t)$, where $M_i$ are the $3 \times 3$ matrices defining $Q_i$. 
The associated height function is given by
\[
H([s,t],[x_0,x_1,x_2]) = H([s,t]) H([x_0,x_1,x_2]),
\]
where the factors on the right hand side denote the usual height in $\PP^1$ and $\PP^2$, respectively. Furthermore, the exceptional curves on $X$ are given by the points such that $Q_1(\bm{x}) = Q_2(\bm{x}) = 0$ or $\Delta(s,t) = 0$. We proceed in two cases.

\textbf{Case 1:} $H([s,t]) \leq B^{1/2}$. Since we wish to count points such that $H(\bm{x})H([s,t]) \leq B$ we can cover the contribution to the counting function $N_U(B)$ by
\[
\sum_{\substack{\norm{(s,t)}_\infty \leq B^{1/2} \\ \Delta(s,t) \neq 0}} N_{{(s,t)}}\left( \frac{B}{\norm{(s,t)}_\infty}\right),
\]
where the sum runs over $(s,t) \in Z_1$ and where
\[
N_{(s,t)}(R) \coloneqq \# \left\{ \bm{x} \in Z_2 \colon \norm{\bm{x}} \ll R^{1/s_K}, \; sQ_1(\bm{x}) + tQ_2(\bm{x}) = 0 \right\}.
\]
Note that we are permitted to exclude the zeroes of the discriminant from this summation, since they lie on the exceptional locus. We can apply Proposition~\ref{Prop: UniformConics} to see that the contribution is bounded above by
\[
\sum_{\norm{(s,t)}_\infty \leq B^{1/2}} B^\varepsilon \left(1 + \frac{B \mathrm{N}(\Delta_0(s,t))^{1/2}}{\norm{(s,t)}_\infty \mathrm{N}(\Delta(s,t))^{1/3}}\right).
\]
The proof of Lemma 7 in~\cite{broberg2001rational}, which is a basic application of elimination theory, carries over identically to our setting and shows that we have $\mathrm{N}(\Delta_0(s,t))^{1/2} \ll 1$. For the remaining range we will divide the values of $\norm{(s,t)}$ and $\mathrm{N}(\Delta(s,t))$ into dyadic intervals. If $\norm{(s,t)} \sim R^{1/s_K}$ and $\mathrm{N}(\Delta(s,t)) \sim S$ then Lemma~\ref{Le: Polysmall} shows that there are at most $R^{1+\varepsilon}S^{1/3}$ many points $(s,t)$ that lie within such a dyadic interval. Hence the contribution from this dyadic interval is bounded by
\[
R^{1+\varepsilon} S^{1/3}\left(1+\frac{B}{RS^{1/3}} \right) \ll B^{1+\varepsilon},
\]
since $R \leq B^{1/2}$ and $S \ll R^{3} \ll B^{3/2}$. The number of dyadic intervals that we have to consider is bounded by $\log^2(B)$ and since each individual contribution is bounded by $B^{1+\varepsilon}$ this suffices to show that the contribution to $N_U(B)$ is indeed bounded by $B^{1+\varepsilon}$.

\textbf{Case 2:} $H([s,t]) \geq B^{1/2}$. In particular we must have $H([x_0,x_1,x_2]) \leq B^{1/2}$. Let $([s,t],[x_0,x_1,x_2])$ be a point of the del Pezzo surface in question. 
Hence by~\eqref{eq.dp5_conic_defn} a representative of $[s,t]$ takes the shape 
\[
(s,t) = (Q_1(\bm{x}),-Q_2(\bm{x})).
\]
Writing $\mathfrak{d}$ for the ideal generated by $Q_1(\bm{x})$ and $Q_2(\bm{x})$, we then have 
\[
H([s,t]) = \frac{\norm{Q_1(\bm{x}),Q_2(\bm{x})}_\infty}{\mathrm{N}(\mathfrak{d})}.
\]
In what follows, given an ideal $\mathfrak{d} \subset \O_K$ such that $\mathrm{N}(\mathfrak{d}) \ll B$ we will count the number of $\bm{x} \in Z_2$ with $Q_i(\bm{x}) \equiv 0 \; \mathrm{mod} \, \mathfrak{d}$. In particular, given $\alpha \ll B^{1/2}$ we may divide the height of $\bm{x}$ into dyadic intervals. We are therefore led to consider sums of the shape
\begin{equation} \label{eq.first_sum_dp5}
\sum_{\substack{\bm{x} \in Z_2 \colon \norm{\bm{x}}_\infty \sim \alpha \\ (Q_1(\bm{x}),Q_2(\bm{x})) = \mathfrak{d} \\ \norm{Q_i(\bm{x})}_\infty  \leq \frac{B\mathrm{N}(\mathfrak{d})}{\alpha}}} 1.
\end{equation}
In view of this we define the set
\[
\widetilde{V}_\mathfrak{d} = \left\{ \bm{x} \in \left(\O_K/\mathfrak{d} \right)^3 \colon (\bm{x}, \mathfrak{d}) \in Z_3, \; Q_1(\bm{x}) \equiv Q_2(\bm{x}) \equiv 0 \; \mathrm{mod} \; \mathfrak{d} \right\},
\]
where the condition $(\bm{x}, \mathfrak{d}) \in Z_3$ is meant to indicate that the ideal generated by $x_0,x_1,x_2$ and $\mathfrak{d}$ is equal to one of the fixed representatives $\mathfrak{a}_i$ of the ideal class group of $K$. Further we define
\[
V_\mathfrak{d} = \widetilde{V}_\mathfrak{d} /(\O_K/\mathfrak{d})^\times.
\]
Given $\bm{y} \in V_\mathfrak{d}$ we further define 
\[
\Lambda_\mathfrak{d}(\bm{y}) \coloneqq \{\bm{z} \in \O_K^3 \colon \bm{z} \equiv \lambda \bm{y} \; \mathrm{mod} \; \mathfrak{d} \text{ for some $\lambda \in \O_K$} \}.
\]
Changing the order of summation in~\eqref{eq.first_sum_dp5}, using the notation introduced above it is easy to see that it suffices to estimate
\begin{equation} \label{es.S_alpha_defn}
S_\alpha \coloneqq \sum_{\mathrm{N}(\mathfrak{d}) \ll \alpha^2} \sum_{\bm{y} \in V_\mathfrak{d}} \sum_{\substack{\bm{x} \in Z_2 \colon \norm{\bm{x}}_\infty \sim \alpha \\ \norm{Q_i(\bm{x})}_\infty \leq \frac{B\mathrm{N}(\mathfrak{d})}{\alpha} \\ \bm{x} \in \Lambda_\mathfrak{d}(\bm{y})}} 1.
\end{equation}
To this end, the following two lemmas help us deal with $V_\mathfrak{d}$ and the lattice $\Lambda_\mathfrak{d}(\bm{y})$.
Since the del Pezzo surface in question is non-singular it follows from the definition~\eqref{eq.dp5_conic_defn} that 
$Q_1(\bm{x}) = Q_2(\bm{x}) = 0$ defines a non-singular intersection in $\PP^2$.
\begin{lemma} 
    Given an ideal $I \subset \O_K$ we have
    \[
    \# {V}_I \ll \mathrm{N}(I)^{\varepsilon}.
    \]
\end{lemma}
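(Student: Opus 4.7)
The bound is of divisor-function type, and my plan is to deduce it from a Chinese Remainder Theorem reduction together with Hensel's lemma applied to the complete intersection $Z(Q_1, Q_2) \subset \PP^2$. Writing $I = \prod_{\mathfrak{p}} \mathfrak{p}^{e_\mathfrak{p}}$ for the prime factorization, the CRT isomorphism $\O_K/I \cong \prod_{\mathfrak{p}} \O_K/\mathfrak{p}^{e_\mathfrak{p}}$ respects both the primitivity condition (that the ideal generated by the coordinates of $\bm{y}$ together with the modulus has a prescribed class) and the action of $(\O_K/I)^\times$, so it induces a bijection
\[
V_I \longleftrightarrow \prod_{\mathfrak{p} \mid I} V_{\mathfrak{p}^{e_\mathfrak{p}}}.
\]
Thus it suffices to bound $\# V_{\mathfrak{p}^k}$ by a constant independent of both $\mathfrak{p}$ and $k$, outside a fixed finite exceptional set of primes.

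Next I would exploit nonsingularity of $Z(Q_1, Q_2)$. Since this is a zero-dimensional, étale complete intersection of degree $4$ over $K$ by B\'ezout, there is a nonzero ideal $\mathfrak{R} \subset \O_K$ (generated by appropriate resultants of $Q_1, Q_2$) such that for every prime $\mathfrak{p} \nmid \mathfrak{R}$ the reduction $Z(\overline{Q}_1, \overline{Q}_2) \subset \PP^2_{\mathbb{F}_\mathfrak{p}}$ consists of at most $4$ smooth $\overline{\mathbb{F}}_\mathfrak{p}$-points. Working in an affine chart around each such $\mathbb{F}_\mathfrak{p}$-rational point, the $2 \times 2$ Jacobian minor of $(Q_1, Q_2)$ in the transverse coordinates is a unit mod $\mathfrak{p}$, so Hensel's lemma lifts the point uniquely (as a projective class) to $\PP^2(\O_K/\mathfrak{p}^k)$ satisfying $Q_1 \equiv Q_2 \equiv 0 \pmod{\mathfrak{p}^k}$. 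This gives $\# V_{\mathfrak{p}^k} \leq 4$ for all $\mathfrak{p} \nmid \mathfrak{R}$, uniformly in $k$.

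The main obstacle is the finitely many bad primes $\mathfrak{p} \mid \mathfrak{R}$, where the direct Hensel argument fails because $Z(\overline{Q}_1, \overline{Q}_2)$ may be singular or even positive-dimensional over $\mathbb{F}_\mathfrak{p}$. To handle these I would use that $\Delta(s,t)$ is a nonzero separable cubic, so there exist $(s,t) \in \O_K^2$ with $\Delta(s,t) \not\equiv 0 \pmod{\mathfrak{p}}$; the linear combination $sQ_1 + tQ_2$ defines a nonsingular conic mod $\mathfrak{p}$, and then one lifts along this nonsingular equation while using the remaining pencil member to pin down the transverse direction via an iterative Hensel/Nakayama step. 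Because the depth of degeneracy is bounded by $v_\mathfrak{p}(\mathfrak{R})$, this iteration terminates after finitely many rounds with a bound $\# V_{\mathfrak{p}^k} \leq c_\mathfrak{p}$ independent of $k$. Since only finitely many primes divide $\mathfrak{R}$, their combined contribution is $O(1)$. Combining the two ranges yields
\[
\# V_I \leq \prod_{\mathfrak{p} \mid I} \max\{4, c_\mathfrak{p}\} \ll 4^{\omega(I)} \ll \tau(I) \ll \mathrm{N}(I)^{\varepsilon}
\]
by the standard divisor bound, completing the argument.
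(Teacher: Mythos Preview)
Your overall strategy---CRT reduction to prime powers, Hensel at good primes, and a uniform-in-$k$ bound at each of finitely many bad primes---is sound and does yield the lemma. It is, however, a genuinely different route from the paper's. The paper follows the exponential-sum argument of Browning--Munshi: rather than bounding $\#V_{\mathfrak p^s}$ directly, it shows $\#\widetilde V_{\mathfrak p^s}\ll \mathrm N(\mathfrak p^s)$ by detecting the congruences with additive characters, reorganising into complete sums $S(k)$ attached to the $5$-variable form $F(\bm b,\bm x)=b_1Q_1(\bm x)+b_2Q_2(\bm x)$, and then proving $S(k)=O(1)$ for $k\ge 2$ via a Hensel lift controlled by the resultant of $\nabla F$. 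Your approach is more geometric and, at the good primes, cleaner: the bound $\#V_{\mathfrak p^k}\le 4$ for $\mathfrak p\nmid\mathfrak R$ is exactly right and transparently argued.

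The gap is in your treatment of the bad primes. The inference ``$\Delta(s,t)$ is a nonzero separable cubic, so there exist $(s,t)\in\O_K^2$ with $\Delta(s,t)\not\equiv 0\pmod{\mathfrak p}$'' is not valid: separability over $K$ only says $\Delta$ has distinct roots over~$\bar K$, not that its reduction mod $\mathfrak p$ is nonzero. At the finitely many primes dividing the content of $\Delta$, every conic in the pencil is singular mod $\mathfrak p$ and your smooth-conic parametrisation is simply unavailable. Even when a smooth pencil member $C$ does exist mod $\mathfrak p$, the phrase ``pin down the transverse direction via an iterative Hensel/Nakayama step'' needs to be made precise: what one actually uses is that the binary quartic $Q_1|_C$ over $\O_{K,\mathfrak p}$ has nonzero discriminant in $K_\mathfrak p$ (by \'etaleness of $Z(Q_1,Q_2)$ over $K$), so that for $k$ exceeding twice the $\mathfrak p$-valuation of that discriminant every root mod $\mathfrak p^k$ lifts uniquely to one of at most four $K_\mathfrak p$-roots, while smaller $k$ are absorbed into $c_\mathfrak p$.

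For the genuinely degenerate primes (content of $\Delta$), one still needs an argument, and the one you sketch does not cover them. A clean fix is to bypass the pencil entirely and run elimination on the pair $(Q_1,Q_2)$: since $Z(Q_1,Q_2)\subset\PP^2_K$ is smooth of dimension $0$, there is a nonzero $D\in\O_K$ bounding the $\mathfrak p$-depth to which all $2\times 2$ Jacobian minors can vanish on a primitive vector; ordinary two-equation Hensel then shows that for $k>2v_\mathfrak p(D)$ each class in $V_{\mathfrak p^k}$ arises from one of the $\le 4$ points of $Z(Q_1,Q_2)(K_\mathfrak p)$. This is precisely the geometric content behind the paper's exponential-sum cancellation $S(k)=0$ for $k\ge 2\delta_\pi+2$, just phrased without characters. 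So your outline is completable, but the bad-prime step as written is not yet a proof.
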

\begin{proof}
    The proof is the same as Lemma 2 in~\cite{brownMunsh} adjusted to our more general setting and therefore we will be brief. Note first that via multiplicativity and homogeneity it suffices to show
    \[
    \# \widetilde{V}_{\mathfrak{p}^s} \ll \mathrm{N}(\mathfrak{p}^s),
    \]
    for any prime ideal $\mathfrak{p} \subset \O_K$ and any positive integer $s$. Note also that unless $\mathfrak{p}$ is one of the $O(1)$ many fixed representatives for the class group of $K$ the condition $(\bm{x},\mathfrak{p}^s) \in Z_3$ is equivalent to saying that $(x_1,x_2,x_3,\mathfrak{p}^s)$ generates $\O_K$. Also consider the isomorphism
    \[
    \O_K/\mathfrak{p}^s \cong \O_{\nu}/(\pi)^s,
    \]
    where $\O_\nu \subset K_\nu$, for $\nu$  the place corresponding to $\mathfrak{p}$, and where $\pi$ is a uniformizer for $K_\nu$. Then we find that it suffices to consider
    \[
    \rho(\pi^s) = \#\left\{\bm{x} \in (\O_\nu/(\pi^s))^3 \colon \langle x_1,x_2,x_3,\pi\rangle = \O_\nu, \, Q_i(\bm{x}) \equiv 0 \; \mathrm{mod} \, (\pi^s), \text{ for } i=1,2 \right\}.
    \]
    For ease of notation write $\psi$ for the additive character on $K_\nu$. Using the orthogonality relation from Lemma~\ref{lem.character_orthogonality} we find
    \begin{equation} \label{eq.rho}
    \rho(\pi^s)= \frac{1}{\mathrm{N}(\pi)^{2s}} \sum_{\bm{b} \; \mathrm{mod}\, \pi^s} \; \psum_{\bm{x} \; \mathrm{mod}\, \pi^s} \psi\left( \frac{b_1Q_1(\bm{x}) + b_2Q_2(\bm{x})}{\pi^s} \right),
    \end{equation}
    where $\sum^*$ indicates that we only sum over tuples $\bm{x}$ such that $\langle \bm{x},\pi^s\rangle = \O_\nu$, or equivalently $\pi \nmid x_i$. Extracting common factors between $\pi^s$ and $\bm{b}$ in the display above we obtain
    \[
    \rho(\pi^s) = \frac{1}{\mathrm{N}(\pi)^{2s}} \sum_{0 \leq i < s} \mathrm{N}(\pi)^{3i} S(s-i) + \mathrm{N}(\pi)^s \left(1-\frac{1}{\mathrm{N}(\pi)}\right),
    \]
    where
    \[
    S(k) = \psum_{\bm{b} \; \mathrm{mod}\, \pi^k} \;  \psum_{\bm{x}\;  \mathrm{mod}\, \pi^k} \psi\left( \frac{F(\bm{b},\bm{x})}{\pi^k} \right),
    \]
    and where $F(\bm{b},\bm{x}) = b_1Q_1(\bm{x})+b_2Q_2(\bm{x})$. It suffices to show $S(k) = O(1)$ for $k \geq 2$ and $S(1) = O(\mathrm{N}(\pi)^3)$. Regarding $S(1)$ note that by Bézout's theorem we find that $\rho(\pi) \ll \mathrm{N}(\pi)$ since $\O_\nu/(\pi) \cong \O_K/\mathfrak{p} \cong \FF_{\mathrm{N}(\mathfrak{p})}$. Note that there is a finite number of primes $\mathfrak{p}$ the two quadrics might share a common component and so Bézout's theorem does not apply for these cases. However, by choosing the implied constant large enough since there are only finitely many primes involved we still obtain $\rho(\pi) \ll \mathrm{N}(\pi)$ for all primes. Substituting this into~\eqref{eq.rho} for $s=1$ we indeed find $S(1) = O(\mathrm{N}(\pi)^3)$. If $k \geq 2$ then after  introducing a dummy sum over $a \in (\O_\nu/\pi^k)^\times$, and making a change of variables $\bm{b} \mapsto a \bm{b}$ we may evaluate the arising Ramanujan sums to obtain
    \[
    S(k) = \left(1-\frac{1}{\mathrm{N}(\pi)} \right)^{-1} \left( \mathcal{N}(k) - \mathrm{N}(\pi)^{4} \mathcal{N}({k-1}) \right),
    \]
    where $\mathcal{N}(k)$ denotes the number of solutions to $F(\bm{b},\bm{x}) \equiv 0 \; \mathrm{mod} \; \pi^k$ such that $\pi \nmid \bm{b}$ and $\pi \nmid \bm{x}$. Writing the discriminant $D \in K$ of the pair of quadrics $Q_i$ as the resultant of the $5$ quadratic forms appearing in $\nabla F(\bm{b},\bm{x})$ (see~\cite[Chapter 13]{gelfand_discriminants}) via elimination theory we obtain polynomials $G_{ij}(\bm{y})$ with coefficients in $\O_K$ where $\bm{y} = (\bm{b},\bm{x})$ and a positive integer $R$ such that
    \[
    Dy_i^R = \sum_{1 \leq j \leq 5} G_{ij}(\bm{y}) \frac{\partial F}{\partial y_i}, \quad \text{for } 1 \leq i \leq 5.
    \]
    Since $Q_i$ define a non-singular intersection over $K$ we have that $D \neq 0$. Writing $\delta_{\pi} = \nu_{\mathfrak{p}}(D)$ we thus obtain that if $\pi^m \mid \nabla F(\bm{b},\bm{x})$ and $\pi \nmid \bm{x}$ and $\pi \nmid \bm{b}$ then $m \leq \delta_\pi$. Since $D \in \O_K$ is nonzero we first note that $\delta_\pi =0$ for all but finitely many $\mathfrak{p}$.  In particular, if $\delta_\pi > 0$  and $2 \leq k \leq 2 \delta_\pi +1$ then we may choose our implied constant sufficiently big, only depending on $Q_i$ to get $S(k) = O(1)$. If $k \geq 2 \delta +2$ then we may apply a standard Hensel lifting argument to show that
    \[
    C_m(k+1) = \mathrm{N}(\pi)^{4} C_m(k),
    \]
    where $C_m(k)$ is the number of $\bm{y} \; \mathrm{mod} \, \pi^k$ with $\pi \nmid \bm{b}$ and $\pi \nmid \bm{x}$ such that $\pi^k \mid F(\bm{y})$ and $\pi^m \parallel \nabla F(\bm{y})$. We note that since $\O_\nu$ is a principal ideal domain the lifting argument goes through completely analogously. Finally, noting that
    \[
    \mathcal{N}(k) = \sum_{0 \leq m \leq \delta} C_m(k)
    \]
    yields $S(k) = 0$ if $k \geq 2 \delta +2$, and thus $S(k) = O(1)$ for all $k \geq 2$, thereby completing the proof.
\end{proof}

\begin{lemma} \label{lem.determinant_for_lattice}
    Given $\bm{y} \in \O_K^3$ such that $(y_1,y_2,y_3,\mathfrak{d})$ generates one of the fixed representatives $\mathfrak{a}_i$ of the ideal class group, then the lattice $\Lambda_\mathfrak{d}(\bm{y})$ has rank $3$ and
    \[
    \det \Lambda_\mathfrak{d}(\bm{y}) \asymp_K \mathrm{N}(\mathfrak{d})^2.
    \]
\end{lemma}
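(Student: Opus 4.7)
The plan is to compute $\det \Lambda_\mathfrak{d}(\bm{y})$ by passing to completions and working prime-by-prime. Since $\Lambda_\mathfrak{d}(\bm{y}) \subset \O_K^3$, we have the factorisation $\det \Lambda_\mathfrak{d}(\bm{y}) = \prod_{\mathfrak{p}} [\O_\mathfrak{p}^3 : \Lambda_\mathfrak{d}(\bm{y})_\mathfrak{p}]$, where the local lattice is $\Lambda_\mathfrak{d}(\bm{y})_\mathfrak{p} = \{\bm{z} \in \O_\mathfrak{p}^3 : \bm{z} \equiv \lambda \bm{y} \bmod \mathfrak{d}\O_\mathfrak{p} \text{ for some } \lambda \in \O_\mathfrak{p}\}$. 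For every $\mathfrak{p} \nmid \mathfrak{d}$, the congruence is vacuous, so $\Lambda_\mathfrak{d}(\bm{y})_\mathfrak{p} = \O_\mathfrak{p}^3$ contributes $1$. The rank claim follows immediately, e.g. from the fact that $\Lambda_\mathfrak{d}(\bm{y})$ contains $(\mathfrak{d}\O_K)^3$ (take $\lambda = 0$), which already has rank $3$.

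The main computation is at a prime $\mathfrak{p} \mid \mathfrak{d}$. Write $\mathfrak{p}^e \| \mathfrak{d}$. The hypothesis $(y_1,y_2,y_3,\mathfrak{d}) = \mathfrak{a}_i$ together with the finiteness of the class group representatives implies that for all but $O_K(1)$ primes $\mathfrak{p}$, the ideal $\mathfrak{a}_i$ localises to $\O_\mathfrak{p}$, and hence $(y_1,y_2,y_3)\O_\mathfrak{p} + \mathfrak{d}\O_\mathfrak{p} = \O_\mathfrak{p}$; since $\mathfrak{p} \mid \mathfrak{d}$, at least one $y_j$ must be a unit in $\O_\mathfrak{p}$. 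After relabelling take $y_1 \in \O_\mathfrak{p}^\times$. Then the constraint $\bm{z} \equiv \lambda \bm{y} \bmod \mathfrak{p}^e$ determines $\lambda \equiv y_1^{-1} z_1 \bmod \mathfrak{p}^e$ uniquely from $z_1$, and the remaining conditions
\[
z_2 \equiv y_1^{-1} y_2 z_1 \bmod \mathfrak{p}^e, \qquad z_3 \equiv y_1^{-1} y_3 z_1 \bmod \mathfrak{p}^e
\]
cut out a sublattice of $\O_\mathfrak{p}^3$ of index exactly $\mathrm{N}(\mathfrak{p}^e)^2 = \mathrm{N}(\mathfrak{p})^{2e}$.

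Taking the product over all $\mathfrak{p}\mid\mathfrak{d}$ that are coprime to the fixed ideal class representatives yields $\mathrm{N}(\mathfrak{d})^2$ up to a multiplicative constant coming from the $O_K(1)$ exceptional primes. At each such exceptional prime $\mathfrak{p}$, one still has $\Lambda_\mathfrak{d}(\bm{y})_\mathfrak{p} \supseteq (\mathfrak{p}^e\O_\mathfrak{p})^3$, so the local index is bounded above by $\mathrm{N}(\mathfrak{p})^{3e}$, hence the total correction factor is $O_K(1)$. Combining everything gives $\det \Lambda_\mathfrak{d}(\bm{y}) \asymp_K \mathrm{N}(\mathfrak{d})^2$ as required.

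The only delicate point is the bookkeeping at the finitely many primes appearing in the class group representatives, where $\bm{y}$ need not have a unit coordinate mod $\mathfrak{p}$; but since the set of such primes depends only on $K$ and not on $\mathfrak{d}$ or $\bm{y}$, a crude upper bound on their contribution is absorbed into the implied constant $\asymp_K$.
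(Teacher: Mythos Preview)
Your localisation approach is natural and different from the paper's, which argues globally by computing $[\Lambda_\mathfrak{d}(\bm{y}):(\mathfrak{d}\O_K)^3]$ directly: the upper bound $\mathrm{N}(\mathfrak{d})$ comes from parametrising cosets by $\lambda\in\O_K/\mathfrak{d}$, and the lower bound from showing that the $\gg\mathrm{N}(\mathfrak{d})$ elements $r\bm{y}$ with $\norm{r}$ small give distinct cosets, via a size argument using the hypothesis on $(y_1,y_2,y_3,\mathfrak{d})$. Your prime-by-prime computation is arguably cleaner at the good primes and gives an exact local index there.

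However, there is a genuine gap at the exceptional primes. You write that the local index at such a prime $\mathfrak{p}$ is at most $\mathrm{N}(\mathfrak{p})^{3e}$ and conclude that ``the total correction factor is $O_K(1)$''. This does not follow: while the \emph{set} of exceptional primes depends only on $K$, the exponent $e=v_\mathfrak{p}(\mathfrak{d})$ does not, and can be arbitrarily large. Thus neither $\prod_{\text{bad }\mathfrak{p}}\mathrm{N}(\mathfrak{p})^{3e}$ (your upper bound) nor $\prod_{\text{bad }\mathfrak{p}}\mathrm{N}(\mathfrak{p})^{2e}$ (the missing factor in the lower bound, since you give no lower bound at bad primes) is $O_K(1)$.

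The fix is to analyse the bad primes just as carefully as the good ones. Write $a=v_\mathfrak{p}(\mathfrak{a}_i)$, which \emph{is} bounded in terms of $K$. Since $\mathfrak{a}_i\supseteq\mathfrak{d}$ we have $e\geq a$. If $e=a$ then both the local index and $\mathrm{N}(\mathfrak{p})^{2e}$ are $O_K(1)$ and there is nothing to do. If $e>a$, then the hypothesis $v_\mathfrak{p}((y_1,y_2,y_3)+\mathfrak{d})=a$ forces $\min_j v_\mathfrak{p}(y_j)=a$; writing $y_j=\pi^a u_j$ with some $u_j$ a unit, your good-prime argument goes through verbatim after clearing the factor $\pi^a$ and yields local index exactly $\mathrm{N}(\mathfrak{p})^{2e+a}$. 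The extra factor $\mathrm{N}(\mathfrak{p})^a$ is now genuinely $O_K(1)$, and summing over the finitely many bad primes gives the desired $\asymp_K\mathrm{N}(\mathfrak{d})^2$.
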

\begin{proof}
    First note that $\Lambda_\mathfrak{d}(\bm{y}) \supset (\mathfrak{d} \O_K)^3$ and therefore
    \[
    \det \Lambda_\mathfrak{d}(\bm{y}) = [\O_K^3 \colon \Lambda_\mathfrak{d}(\bm{y})] = \frac{[\O_K^3 \colon (\mathfrak{d} \O_K)^3]}{[\Lambda_\mathfrak{d}(\bm{y}) \colon (\mathfrak{d} \O_K)^3]} = \frac{\mathrm{N}(\mathfrak{d})^3}{[\Lambda_\mathfrak{d}(\bm{y}) \colon (\mathfrak{d} \O_K)^3]}.
    \]
    Hence it suffices to establish $[\Lambda_\mathfrak{d}(\bm{y}) \colon (\mathfrak{d} \O_K)^3] \asymp \mathrm{N}(\mathfrak{d})$. First note that it is clear that $\lambda \bm{y} + (\mathfrak{d} \O_K)^3$ where $\lambda$ runs through all possible elements of $\O_K/\mathfrak{d}$ exhausts all possible cosets in  $\Lambda_\mathfrak{d}(\bm{y}) / (\mathfrak{d} \O_K)^3$ and hence $[\Lambda_\mathfrak{d}(\bm{y}) \colon (\mathfrak{d} \O_K)^3] \leq \mathrm{N}(\mathfrak{d})$.

    For a lower bound regarding the index, given $c >0$ we consider elements $r \in \O_K$ such that $\norm{r} < c \mathrm{N}(\mathfrak{d})^{1/s_K}$. We claim that if we choose $c$ small enough, only depending on $K$, then the elements $r \bm{y} + (\mathfrak{d} \O_K)^3$ where $r$ runs through elements as above are all distinct cosets. Since the number of elements $r$ with the above property is $\gg \mathrm{N}(\mathfrak{d})$ this suffices in order to prove the lemma.
    If not of all the above cosets are distinct then upon taking their difference we see that there exists some $r_0 \in \O_K \setminus \{0\}$ with $
\norm{r_0} \ll c \mathrm{N}(\mathfrak{d})^{1/s_K}$ and $r_0y_i \in \mathfrak{d}$ for $i = 1, 2,3$. In particular we have
    \[
    \langle r_0y_1, r_0y_2,r_0y_3,r_0 \mathfrak{d} \rangle \subset \mathfrak{d} 
    \]
    Since the ideal generated by $y_i$ and $\mathfrak{d}$ is one of the fixed representatives of the class group of $K$ there exists some nonzero $z \in \langle y_1,y_2,y_3,\mathfrak{d} \rangle$ such that $\norm{z} = O_K(1)$. Hence $r_0z \in \mathfrak{d}$ and so after multiplying $r_0$ with a unit if necessary we find
    \[
     \mathrm{N}(\mathfrak{d})^{1/s_K} \leq \norm{r_0z}_\infty^{1/s_K} \ll \norm{r_0z}.
    \]
    Therefore we have
    \[
    \mathrm{N}(\mathfrak{d})^{1/s_K} \ll \norm{r_0z} \ll \norm{r_0}\norm{z} \ll \norm{r_0} \ll c \mathrm{N}(\mathfrak{d})^{1/s_K},
    \]
    and so upon choosing $c>0$ small enough we obtain a contradiction. Thus the claim and consequently the Lemma transpire.
\end{proof}

We now return to estimating $S_\alpha$ as it was defined in~\eqref{es.S_alpha_defn}. Since there are only $B^{\varepsilon}$ many different values of $\alpha$ we need to consider, it suffices to show $S_\alpha \ll B^{1+\varepsilon}$.

We denote the successive minima of $\Lambda_\mathfrak{d}(\bm{y})$ by $\lambda_1 \leq \lambda_2 \leq \lambda_3$, as they were defined in Section~\ref{sec.lattices}.
 Note first that for every $\bm{y} \in V_\mathfrak{d}$ using Lemma~\ref{lem.minkowskis_second_theorem} and Lemma~\ref{lem.good_units} we may take a representative $\bm{y} \in \O_K^3$ such that $|\bm{y}|_\nu \ll \mathrm{N}(\mathfrak{d})^{d_\nu/d_K}$. Let $\bm{x}_1,\bm{x}_2 \in \O_K^3$ with $|\bm{x}_i|_\nu \ll 1$ for all $\nu \mid \infty$ and $i=1,2$ such that $\bm{y},\bm{x}_1,\bm{x}_2$ are linearly independent. Moreover, choose $\delta \in \mathfrak{d}$ such that $|\delta|_\nu \ll \mathrm{N}(\mathfrak{d})^{d_\nu/d_K}$, which is possible by Lemma~\ref{lem.minkowskis_second_theorem}. Then the set $\{\bm{y}, \bm{y} + \delta \bm{x}_1, \bm{y}+\delta \bm{x}_2\}$ constitutes a linearly independent set inside $\Lambda_\mathfrak{d}(\bm{y})$. The $\nu$-adic absolute value of each of these vectors is bounded by $O(\mathrm{N}(\mathfrak{d})^{d_\nu/d_K})$ for all $\nu \mid \infty$. We deduce
\[
\lambda_3^{d_K} \ll \prod_{\nu \mid \infty} \mathrm{N}(\mathfrak{d})^{d_\nu/d_K}= \mathrm{N}(\mathfrak{d}),
\]
and so Lemma~\ref{lem.minkowskis_second_theorem} yields
\[
\frac{1}{(\lambda_1 \lambda_2)^{d_K}} \ll \frac{1}{\mathrm{N}(\mathfrak{d})},
\]
since $\det \Lambda_\mathfrak{d}(\bm{y}) \asymp \mathrm{N}(\mathfrak{d})^2$ by Lemma~\ref{lem.determinant_for_lattice}.
We will use this fact as well as Lemma~\ref{lem.lattice_in_box} in order to estimate $S_\alpha$.
If $\alpha < \lambda_2^{d_K}$ then the number of lattice points with $\norm{\bm{x}}_\infty \sim \alpha$  is bounded by $O(1)$ since we only consider the contribution from primitive points. Thus $S_\alpha$ is bounded by
\[
\sum_{\mathrm{N}(\mathfrak{d}) \ll \alpha^2} \sum_{\bm{y} \in V_\mathfrak{d}} 1 \ll \alpha^{2+\varepsilon} \ll B^{1+\varepsilon}.
\]
If $\lambda_2^{d_K} \leq \alpha < \lambda_3^{d_K}$, then via Lemma~\ref{lem.lattice_in_box} the number of lattice points that we count is bounded by $O(\alpha^2/(\lambda_1\lambda_2)^{d_K})$. Thus we obtain
\[
S_\alpha \ll \sum_{\mathrm{N}(\mathfrak{d}) \ll \alpha^2} \sum_{\bm{y} \in V_\mathfrak{d}} \frac{\alpha^2}{\mathrm{N}(\mathfrak{d})} \ll \alpha^{2+\varepsilon} \ll B^{1+\varepsilon}.
\]
Finally, if $\lambda_3^{d_K}\leq \alpha$ then we divide the contribution into the range $\alpha^3/B \ll \mathrm{N}(\mathfrak{d}) \ll \alpha^2$ and the range $\mathrm{N}(\mathfrak{d}) \ll \alpha^3/B$. For the first range we may employ Lemma~\ref{lem.lattice_in_box} to find that the contribution is bounded by
\begin{equation*} 
\sum_{\alpha^3/B \ll \mathrm{N}(\mathfrak{d}) \ll \alpha^2} \frac{\alpha^3}{\mathrm{N}(\mathfrak{d})^2} \ll \alpha^{3}(\alpha^{-2} + B/\alpha^3) \alpha^\varepsilon \ll B^{1+\varepsilon}.
\end{equation*}
It remains to handle
\begin{equation} \label{eq.dp5_count_lattice_quadratic}
\sum_{\mathrm{N}(\mathfrak{d}) \ll \alpha^3/B} \sum_{\bm{y} \in V_\mathfrak{d}} \sum_{\substack{\bm{x} \in Z_2 \colon \norm{\bm{x}}_\infty \sim \alpha \\  \norm{Q_i(\bm{x})}_\infty \leq \frac{B\mathrm{N}(\mathfrak{d})}{\alpha} \\ \bm{x} \in \Lambda_\mathfrak{d}(\bm{y})}} 1.
\end{equation}
At this point it does not suffice to just count the points contained in a ball inside the lattice, and we need to take advantage of the additional restriction given by the quadratic forms. We deal with this in the following Lemma.
\begin{lemma} \label{lem.lattice_quadric_bounded_region}
    Let $\Lambda \subset K^3$ be a lattice with successive minima $\lambda_1 \leq \lambda_2 \leq \lambda_3$ and let $Q \in \O_K[x_1,x_2,x_3]$ be a quadratic form of rank at least $2$. Let $\alpha, R \geq 1$ be real numbers such that $R^{1/2} \ll \alpha \ll R$. Consider
    \[
    N(\alpha,R) \coloneqq \#\left\{ \bm{x} \in {Z}_2 \cap \Lambda \colon \norm{\bm{x}}_\infty < \alpha, \; \norm{Q(\bm{x})}_\infty < R \right\}.
    \]
We have the bound
\[
N(\alpha,R)\ll  \frac{\alpha^{4+\varepsilon}}{R^2} + \frac{\alpha^{3+\varepsilon}}{R \lambda_1^{d_K}}  + \frac{\alpha^{2+\varepsilon}}{(\lambda_1\lambda_2)^{d_K}} + \frac{\alpha^{1+\varepsilon} R}{\det \Lambda}+ \frac{\alpha}{(\det \Lambda)^{1/3}}. 
\]
\end{lemma}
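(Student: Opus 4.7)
The plan is to reduce the count to one inside a free $\O_K$-lattice via Lemma~\ref{Le: GoodLattice}, and then to exploit the quadratic constraint through a case split on one coordinate, invoking the binary-form counting tools of the present section. First, I would apply Lemma~\ref{Le: GoodLattice} to replace $\Lambda$ by a free lattice $\Lambda'\supset\Lambda$ of comparable determinant with basis $\bm{b}_1,\bm{b}_2,\bm{b}_3$ satisfying $|\bm{b}_i|_\nu\asymp\lambda_i^{d_\nu}$ at every infinite place $\nu$. Writing $\bm{x}=y_1\bm{b}_1+y_2\bm{b}_2+y_3\bm{b}_3$ with $y_i\in\O_K$, part (iv) of that lemma yields $|y_i|_\nu\ll|\bm{x}|_\nu\lambda_i^{-d_\nu}$ and hence $\|y_i\|_\infty\ll\alpha\lambda_i^{-d_K}$. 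The problem reduces to counting $\bm{y}\in\O_K^3$ satisfying these infinity-norm bounds such that the substituted ternary form $F(\bm{y}):=Q(y_1\bm{b}_1+y_2\bm{b}_2+y_3\bm{b}_3)$ has $\|F(\bm{y})\|_\infty\leq R$; since $Q$ has rank at least two, so does $F$.

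Next, I would split the count according to whether $y_3=0$ or $y_3\neq 0$. The degenerate case $y_3=0$ reduces to a count in the rank-two sublattice $\O_K\bm{b}_1\oplus\O_K\bm{b}_2$, which by Lemma~\ref{lem.lattice_in_box} contributes at most $\alpha^{2+\varepsilon}/(\lambda_1\lambda_2)^{d_K}$, while the restriction to the conic $F(y_1,y_2,0)=0$ handled via Proposition~\ref{Prop: UniformConics} (after rescaling coordinates by scalars $\gamma_i\in K$ so that the bounding box becomes uniform) yields the term $\alpha/(\det\Lambda)^{1/3}$, using Lemma~\ref{lem.minkowskis_second_theorem} to relate $\det\Lambda$ to $(\lambda_1\lambda_2\lambda_3)^{d_K}$.

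In the generic case $y_3\neq 0$, I would fix $y_3$ (with $\max\{1,\alpha\lambda_3^{-d_K}\}$ choices) and view $F(\,\cdot\,,\,\cdot\,,y_3)$ as an affine binary quadratic form in $(y_1,y_2)$ whose quadratic part is non-degenerate for all but a negligible set of $y_3$, owing to the rank hypothesis. Dyadically decomposing $|F(\bm{y})|_\nu$ at each infinite place and applying Corollary~\ref{Cor: QuadRepII} to the equation $F(y_1,y_2,y_3)=c$ for each $c\in\O_K$ of bounded norm (or alternatively Lemma~\ref{lem.binary_form_separable_bound} after a further reduction), one obtains a count that, summed over $y_3$ and the dyadic parameters and balanced against the trivial box bound $\alpha^2/(\lambda_1\lambda_2)^{d_K}$ in complementary sub-regimes, accounts for the remaining three terms $\alpha^{4+\varepsilon}/R^2$, $\alpha^{3+\varepsilon}/(R\lambda_1^{d_K})$ and $\alpha^{1+\varepsilon}R/\det\Lambda$.

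The hard part will be the balancing in this generic case: each of the five terms in the stated bound is the dominant contribution in a different regime of $(\alpha,R,\lambda_1,\lambda_2,\lambda_3)$, and the assumption $R^{1/2}\ll\alpha\ll R$ is needed precisely to rule out otherwise competitive regimes. The case analysis must therefore be organised so that every configuration of parameters is covered by the stated sum. Tracking primitivity (the $Z_2$ condition) through the change of basis, and verifying that the exceptional $y_3$ for which $F(\,\cdot\,,\,\cdot\,,y_3)$ drops in rank contribute only to lower-order terms, are the technically delicate points.
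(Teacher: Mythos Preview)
Your plan diverges from the paper's argument, and it contains a concrete error that would have to be repaired before the strategy could succeed.

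The paper does not slice on a basis coordinate at all. Instead, after separating off the contribution from $Q(\bm{x})=0$ (which gives the $\alpha/(\det\Lambda)^{1/3}$ term via the argument of Corollary~\ref{Cor: ConicsDisc} applied to the full ternary conic, not to any two-variable restriction), it runs a covering argument in $\prod_{\nu\mid\infty}K_\nu^3$: place each $|Q(\bm{x})|_\nu$ in a dyadic range $r_\nu$ with $\prod_\nu r_\nu\asymp R$, cover the region $S=\prod_\nu\{\,|\bm{x}|_\nu\ll\alpha^{1/s_K},\ |Q(\bm{x})|_\nu\leq r_\nu\,\}$ by $M$ translates of a box of side $\asymp r_\nu/\alpha^{1/s_K}$, bound $M$ via a volume computation (using rank~$\geq 2$ to write $Q$ locally as $yz-Dx^2$ when isotropic), and count lattice points in each small box by Lemma~\ref{lem.lattice_in_box}. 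This yields $M\ll\alpha^{4+\varepsilon}/R^2$, and multiplying by the four terms of the box count produces exactly the first four summands of the claimed bound.

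Your slicing approach has the following gap: the quadratic part in $(y_1,y_2)$ of $F(\,\cdot\,,\,\cdot\,,y_3)$ is $F(y_1,y_2,0)$, which is \emph{independent of $y_3$}. Hence it is either non-degenerate for every $y_3$ or for none, and your phrase ``for all but a negligible set of $y_3$'' has no content. When it is degenerate (take $F=y_1y_3+y_2^2$, rank~$3$) you would be forced to slice on a different coordinate, say $y_1$, which changes the box sizes from $\alpha/\lambda_i^{d_K}$ and hence the balance you sketch. Relatedly, the term $\alpha/(\det\Lambda)^{1/3}$ cannot come from the locus $F(y_1,y_2,0)=0$, which is a binary quadric with $O(1)$ projective solutions; it must come from the full conic $F(\bm{y})=0$ in three variables. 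Finally, it is not apparent how your representation count would manufacture the terms $\alpha^{4+\varepsilon}/R^2$ and $\alpha^{1+\varepsilon}R/\det\Lambda$: in the paper these are the extreme cases of $M\times(\text{box count})$, and your decomposition has no analogue of the covering number $M$.
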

Deferring the proof for now, applying Lemma~\ref{lem.lattice_quadric_bounded_region} to the inner sum in~\eqref{eq.dp5_count_lattice_quadratic} with $R = B \mathrm{N}(\mathfrak{d})/\alpha$ we find that 
\begin{equation*} 
S_\alpha \ll B^\varepsilon \sum_{\mathrm{N}(\mathfrak{d}) \ll \alpha^3/B} 
\left( \frac{\alpha^6}{B^2 \mathrm{N}(\mathfrak{d})^2} + \frac{\alpha^4}{B\mathrm{N}(\mathfrak{d}) \lambda_1^{d_K}} +  \frac{\alpha^2}{(\lambda_1 \lambda_2)^{d_K}} + \frac{B}{\mathrm{N}(\mathfrak{d})} + \frac{\alpha}{\mathrm{N}(\mathfrak{d})^{2/3}} \right).
\end{equation*}
Using the fact that $\lambda_1 \gg 1$ as well as $1/(\lambda_1\lambda_2)^{d_K} \ll \mathrm{N}(\mathfrak{d})^{-1}$ one may easily check that the above expression is bounded by $B^{1+\varepsilon}$. Once we prove Lemma~\ref{lem.lattice_quadric_bounded_region} this concludes the proof of Theorem~\ref{Th: TheConicBundleTheorem} for $d=5$.

\begin{proof}[Proof of Lemma~\ref{lem.lattice_quadric_bounded_region}]
First note that the contribution to $N(\alpha,R)$ from the vectors $\bm{x} \in Z_2 \cap \Lambda$ such that $Q(\bm{x}) = 0$ is bounded above by $O(1+\alpha/\det(\Lambda)^{1/3})$ by using the same argument as in the proof of Corollary~\ref{Cor: ConicsDisc}.

For the remaining contribution we begin by decomposing the possible local absolute values that the quadratic forms may take into dyadic intervals. Note first that if $Q(\bm{x}) \neq 0$ then $\prod_\nu |Q(\bm{x})|_\nu \geq 1$. Further, if $\bm{x}$ is counted by $N(\alpha,R)$ then $|Q(\bm{x})|_\nu \ll \alpha^{2/s_K}$ holds for all $\nu$. We deduce that the vectors counted by $N(\alpha,R)$ satisfy
\[
\alpha^{-2} \leq  |Q(\bm{x})|_\nu \ll \alpha^{2 /s_K},
\]
for all $\nu \mid \infty$. Fix now a place $\omega \mid \infty$ and let $\bm{r} = (r_\nu)_{\nu \neq \omega}$. We define
\[
N(\alpha,\bm{r}) = \#\left\{ \bm{x} \in  \Lambda \colon |\bm{x}|_\nu \ll \alpha^{1/s_K}, \;  |Q(\bm{x})|_\nu \sim r_\nu \text{ and } \norm{Q(\bm{x})}_\infty \leq R \right\}.
\]
It suffices to obtain establish the desired upper bound for $N(\alpha,\bm{r})$ whenever $\alpha^{-2} \leq r_\nu \leq R \alpha^{2 s_K}$ is satisfied, since the number of dyadic decompositions we require in order to cover the set of points counted by $N(\alpha,R)$ is bounded by $\alpha^\varepsilon$.
Writing $r_\omega = 2^{1-s_K}R/\prod_{\nu \neq \omega} r_\nu$ we clearly see that
\[
N(\alpha,\bm{r}) \leq \# \left\{ \bm{x} \in \Lambda \colon |\bm{x}|_\nu \ll \alpha^{1/s_K}, |Q(\bm{x})|_\nu \leq r_\nu \right\}
\]
Consider the set
\[
S(\alpha,(r_\nu)_\nu) = \prod_\nu \left\{\bm{x} \in K_\nu^3 \colon |\bm{x}|_\nu \ll \alpha^{1/s_K}, |Q(\bm{x})|_\nu \leq r_\nu  \right\}.
\]
 Writing $\Delta \colon K^3 \hookrightarrow \prod_{\nu} K_\nu^3$ for the diagonal embedding, we see that 
 \[N(\alpha,\bm{r}) \ll \# (\Delta^{-1}\left( S(\alpha,(r_\nu)_\nu) \right) \cap \Lambda).\]
 Given $\nu \mid \infty$ such that $K_\nu$ is not isomorphic to the complex numbers, consider the box
 \[
 \mathcal{B}_\nu = \left\{\bm{x} \in K_\nu^3 \colon |\bm{x}|_\nu \leq r_\nu/\alpha^{1/s_K} \right\}.
 \]
 If $K_\nu \cong \CC$ then write
 \[
 \mathcal{B}_\nu = \left\{\bm{x} \in K_\nu^3 \colon \max_i|(\Re(x_i),\Im(x_i)| \leq \sqrt{r_\nu/\alpha^{1/s_K}} \right\},
 \]
 where the absolute value in the definition of $\mathcal{B}_\nu$ is taken to be the usual absolute value on $\RR$. Note that for complex $\nu$ we have
 \[
 \{\bm{x} \in K_\nu^3 \colon |\bm{x}|_\nu \leq r_\nu/\alpha^{1/s_K} \} \subset \mathcal{B}_\nu  \subset \{\bm{x} \in K_\nu^3 \colon |\bm{x}|_\nu \leq 2 r_\nu/\alpha^{1/s_K} \}.
 \]
Define
    \[
    \mathcal{B} = \prod_\nu\mathcal{B}_\nu.
    \]
    Since $S(\alpha,(r_\nu)_\nu)$ defines a bounded set we may cover it with $M$, say, translates of $\mathcal{B}$,  denoted by $\mathcal{B}_i$. We pay choose the $\mathcal{B}_i$ such that the pairwise intersection of these boxes has trivial measure, and also such that $\mathcal{B}_i \cap S(\alpha,(r_\nu)_\nu) \neq \emptyset$.

  By translating $\Delta^{-1}(\mathcal{B}_i)$ by a point contained in $\Lambda \cap \Delta^{-1}(\mathcal{B}_i)$ if necessary, then via translation invariance of the lattice we find
    \[
    \#(\Lambda \cap \Delta^{-1}(\mathcal{B}_i)) \ll \# \left\{ \bm{x} \in \Lambda \colon |\bm{x}|_\nu \ll  \frac{r_\nu}{\alpha^{1/s_K}} \text{ for all } \nu \mid \infty \right\},
    \]
    for all $i = 1, \hdots, M$. Lemma~\ref{lem.lattice_in_box} therefore delivers
    \[
    \#(\Lambda \cap \Delta^{-1}(\mathcal{B}_i)) \ll 1 + \frac{\prod_\nu r_\nu}{\alpha\lambda_1^{d_K}}+ \frac{\left(\prod_\nu r_\nu\right)^2}{\alpha^2 (\lambda_1\lambda_2)^{d_K}} + \frac{\left(\prod_\nu r_\nu\right)^3}{\alpha^3 \det (\Lambda)},
    \]
    and thus
    \begin{equation} \label{eq.lattice_in_quadric_in_terms_of_M}
     {N}(\alpha,\bm{r}) \ll M \left( 1 + \frac{\prod_\nu r_\nu}{\alpha\lambda_1^{d_K}}+  \frac{\left(\prod_\nu r_\nu\right)^2}{\alpha^2 (\lambda_1\lambda_2)^{d_K}} + \frac{\left(\prod_\nu r_\nu\right)^3}{\alpha^3 \det (\Lambda)}\right).
    \end{equation}   
    To obtain an upper bound on the number of boxes $M$ that we need in order to cover this region, consider $(\bm{x}_\nu)_\nu \in \mathcal{B}_i \cap S(\alpha,(r_\nu)_\nu)$. Then all points inside $\mathcal{B}_i$ are of the form $(\bm{x}_\nu)_\nu + (\bm{y}_\nu)_\nu$ with $|\bm{y}_\nu|_\nu \ll r_\nu/\alpha^{1/s_K}$. Therefore we have that
    \[
    |\bm{x}_\nu + \bm{y}_\nu|_\nu \ll \alpha^{1/s_K} + \frac{r_\nu}{\alpha^{1/s_K}} \ll \alpha^{1/s_K},
    \]
    since we only consider $(r_\nu)_\nu$ such that  $r_\nu \ll \alpha^{2/s_K}$ holds. Further we find
    \begin{align*}
        |Q(\bm{x}_\nu + \bm{y}_\nu)|_\nu &= |Q(\bm{x}_\nu) + \bm{x}_\nu^T \nabla Q(\bm{y}_\nu) + Q(\bm{y_\nu})|_\nu \\
        &\ll r_\nu + \alpha^{1/s_K} \frac{r_\nu}{\alpha^{1/s_K}} + \left(\frac{r_\nu}{\alpha^{1/s_K}}\right)^2 \\
        &\ll r_\nu.
    \end{align*}
    Since $i$ was arbitrary, we deduce that there exists a constant $C>0$, only depending on the quadratic form $Q$ and $K$ such that 
    \[
    \bigcup_{i=1}^M \mathcal{B}_i \subset S(C\alpha, C(r_\nu)_\nu).
    \]
    Hence 
    \begin{equation} \label{eq.bound_for_M}
           M \ll \left(\frac{\alpha}{\prod_\nu r_\nu}\right)^3 \vol S(\alpha,(r_\nu)_\nu).
    \end{equation}
We will now compute the volume of $S(Q,\alpha,(r_\nu)_\nu)$.
 Note first that
\[
\vol S(\alpha,(r_\nu)_\nu) = \prod_{\nu \mid \infty} \vol S_\nu(\alpha,r_\nu),
\]
where
\[
S_\nu(\alpha,r_\nu) =  \left\{\bm{x} \in K_\nu^3 \colon |\bm{x}|_\nu \ll \alpha^{1/s_K}, |Q(\bm{x})|_\nu \ll r_\nu  \right\}.
\]
We consider two different cases. Firstly, if $Q$ is not isotropic over $K_\nu$ then there exists some constant $D >0$ depending only on $Q$ such that $|Q(\bm{x})|_\nu \geq D$ holds for all $\bm{x} \in K_\nu^3$ such that $|\bm{x}|_\nu = 1$. Therefore $|Q(\bm{x})|_\nu \geq A |\bm{x}|_\nu^2$ holds, and so $\bm{x} \in S_\nu(\alpha,R)$ implies that we must have $|\bm{x}|_\nu \ll r_\nu^{1/2}$. As a result we easily deduce
\[
\vol S_\nu(\alpha,R)  \ll r_\nu^{3/2} \ll \alpha^{1/s_K} r_\nu
\]
via recalling that we only consider $r_\nu$ such that $r_\nu \ll \alpha^{2/s_K}$.
If $Q$ is isotropic on the other hand, then by the classical theory of quadratic forms after a linear transformation of the coordinates in $K_\nu$ we may assume that it is of the shape
\[
Q(x,y,z) = yz-Dx^2,
\]
for some constant $D\in K_\nu$ depending on $Q$ that can be $0$ if the rank of $Q$ is 2. Note first that away from the nullset $z = 0$ we clearly have
\[
\vol\left\{ y \in K_\nu \colon |yz-Dx^2|_\nu \ll r_\nu, |y|_\nu \ll \alpha^{1/s_K} \right\} \ll \min\left\{ \alpha^{1/s_K}, \frac{r_\nu}{|z|_\nu} \right\},
\]
for any $x \in K_\nu$. Hence we find
\begin{align*}
\vol S_\nu(\alpha,r_\nu) \ll \int_{|x|_\nu \ll \alpha^{1/s_K}} \int_{|z|_\nu \ll \alpha^{1/s_K}} \min\left\{ \alpha^{1/s_K}, \frac{r_\nu}{|z|_\nu} \right\} dzdx \ll r_\nu \alpha^{1/s_K+\varepsilon}.
\end{align*}
We conclude that
\[
\vol S(\alpha,R) \ll \alpha^{1+\varepsilon} \prod_{\nu \mid \infty} r_\nu.
\]
From~\eqref{eq.bound_for_M} it thus follows that
\[
M \ll \frac{\alpha^{4+\varepsilon}}{(\prod_\nu r_\nu)^2}.
\]
Finally, recall $\prod_\nu r_\nu \asymp R$.
Using all of this along with~\eqref{eq.lattice_in_quadric_in_terms_of_M} and recalling the contribution from $Q(\bm{x}) = 0$ we have
\begin{align*}
    N(\alpha,R)
    &\ll \frac{\alpha^{4+\varepsilon}}{R^2} + \frac{\alpha^{3+\varepsilon}}{R \lambda_1^{d_K}}  + \frac{\alpha^{2+\varepsilon}}{(\lambda_1\lambda_2)^{d_K}} + \frac{\alpha^{1+\varepsilon} R}{\det \Lambda} + \frac{\alpha}{(\det \Lambda)^{1/3}}
\end{align*}
which completes the proof of this Lemma.
\end{proof}

\subsection{del Pezzo surfaces of degree 4} 
Our proof is very similar to the one given by Browning and Swarbrick-Jones \cite{browning_S-J} except for the fact that we avoid the Thue--Siegel--Roth theorem.

A del Pezzo surface of degree $4$ may be written as a complete intersection of two quadrics inside $\PP^4$.
After a change of variables if necessary, if $X$ contains a conic we may write the system of quadrics as
\[
x_0x_1-x_2x_3 = Q(x_0,x_1,x_2,x_3) + x_4^2 = 0
\]
inside $\PP^4$, where $Q$ is a quadratic form defined over $\O_K$. Taking $U \subset X$ to be the Zariski open set obtained after removing the exceptional locus (in this case it consists of $16$ lines) we find two conic fibrations $\pi_i \colon U \rightarrow \PP^1$ explicitly given by
\[
\pi_1(\bm{x}) = \begin{cases}
    [x_0,x_2], \quad &\text{if $(x_0,x_2) \neq \bm{0}$,} \\
    [x_3,x_1],\  &\text{if $(x_3,x_1) \neq \bm{0}$,}
\end{cases}
\]
and
\[
\pi_2(\bm{x}) = \begin{cases}
    [x_0,x_3], \quad &\text{if $(x_0,x_3) \neq \bm{0}$,} \\
    [x_2,x_1],  &\text{if $(x_2,x_1) \neq \bm{0}$.}
    \end{cases}
\]
In particular, this gives rise to a well-defined morphism $\pi \colon X \rightarrow \PP^1 \times \PP^1$. Denoting the Segre embedding by $\psi \colon \PP^1 \times \PP^1 \rightarrow \PP^3$ one can easily verify that we have
\[
H_{}(\pi_1(\bm{x})) H_{}(\pi_2(\bm{x})) = H(\psi(\pi(\bm{x})))
\]
for all $\bm{x} \in X(K)$.
One may further easily check that 
\[
\psi \circ \pi ([x_0,x_1,x_2,x_3,x_4]) = [x_0,x_3,x_2,x_1]
\]
holds, whenever $[x_0,x_1,x_2,x_3,x_4] \in U(K)$, and so the degree of $\psi \circ \pi \colon U \rightarrow \PP^3$ is $1$.
The functoriality of heights~\cite[Section 2.3]{SerreLecturesMordellWeil} therefore shows that we have
\[
H_{}(\pi_1(\bm{x})) H_{}(\pi_2(\bm{x})) \ll H(\bm{x}),
\]
for any $\bm{x} \in U(K)$. In particular it follows that given $\bm{x} \in U(K)$ we must have $H(\pi_i(\bm{x})) \ll H(\bm{x})^{1/2}$ for $i=1$ or $i=2$. We deduce that
\[
N_U(B) \leq n_1(B) + n_2(B),
\]
where
\[
n_i(B) = \# \left\{\bm{x}\in U(K) \colon H(\bm{x}) \leq B, \; H(\pi_i(\bm{x})) \ll B^{1/2} \right\}.
\]
It suffices to show $n_1(B) \ll B^{1+\varepsilon}$ since dealing with $n_2(B)$ is identical.
Given $(s,t) \in Z_1$ denote by $n_1(B,s,t)$ the cardinality of the points in the fibre $\pi_1^{-1}([s,t]) \cap U(k)$ of height at most $O(B^{1/2})$.
In order to show the desired bound for $n_1(B)$ via a dyadic decomposition argument it suffices to show the same bound for
\[
n_1(R,B) \coloneqq \sum_{\substack{(s,t) \in Z_1 \\ \norm{(s,t)}_\infty \sim R}} n_1(B,s,t).
\]
where $R \ll B^{1/2}$.
Given $(s,t) \in Z_1$ we have that a representative $(ux,yv,xv,yu,z) \in Z_4$ lies in the fibre $\pi_1^{-1}([s,t]) \cap U(k)$ precisely when 
\begin{equation} \label{eq.conic_dp4}
Q(sx,yt,xt,ys) + az^2 = 0.
\end{equation}
For fixed $(s,t)$ we may regard the above as a ternary quadratic form in $(x,y,z)$. Then the discriminant $\Delta(s,t)$ defines a separable, homogeneous polynomial with $\deg \Delta(s,t) = 4$. Note also that there are no points in $\pi_1([s,t])^{-1} \cap U(k)$ such that $\Delta(s,t) = 0$. We may bound $n_1(B,s,t)$ by the number of $(x,y,z) \in Z_2$ such that~\eqref{eq.conic_dp4} and
\[
|(sx,yt,xt,ys,z)|_\nu \ll B^{1/s_K}
\]
is satisfied for all $\nu \mid \infty$. Thus we may employ Corollary~\ref{Cor: ConicsDisc} in order to find
\[
n_1(R,B) \ll \sum_{\substack{(s,t) \in Z_1\\ \norm{(s,t)}_\infty \sim R \\ \Delta(s,t) \neq 0}} B^\varepsilon \left(1+ \frac{B \mathrm{N}(\Delta_0(s,t))^{1/2}}{R^{2/3}\mathrm{N}(\Delta(s,t))^{1/3}} \right).
\]
First note that the number of $(s,t) \in Z_1$ such that $\norm{(s,t)}_\infty \sim R$ is bounded by $R^2 \ll B$. Further, an argument of Broberg~\cite[Lemma 7]{broberg2001rational} shows that we have $\mathrm{N}(\Delta_0(s,t)) \ll 1$ for all $(s,t)\in Z_1$ such that $\Delta(s,t) \neq 0$. We are thus left with estimating
\begin{equation} \label{eq.dp4_last_estimate}
\sum_{\substack{(s,t) \in Z_1\\ \norm{(s,t)}_\infty \sim R \\ \Delta(s,t) \neq 0}}  \frac{B^{1+\varepsilon}}{R^{2/3}\mathrm{N}(\Delta(s,t))^{1/3}}. 
\end{equation}
We will proceed by further dividing the value of $\mathrm{N}(\Delta(s,t))$ into dyadic intervals. Say $\mathrm{N}(\Delta(s,t)) \sim S$, then we clearly have $1 \leq S \ll R^4$ and thus the number of dyadic intervals we have to consider is bounded by $O(B^\varepsilon)$.
We can apply Lemma~\ref{lem.binary_form_separable_bound} to find that there are at most $O(R^{1+\varepsilon}(1+S/R^{3}))$ many points $(s,t) \in Z_1$ such that $\norm{(s,t)}_\infty \sim R$ and $\mathrm{N}(\Delta(s,t)) \sim S$. Hence if $R \leq S$ then  we may bound~\eqref{eq.dp4_last_estimate} by
\[
\frac{B^{1+\varepsilon} R}{R^{2/3}S^{1/3}}\left(1+\frac{S}{R^3}\right) \ll \frac{B^{1+\varepsilon} R^{1/3}}{S^{1/3}} + \frac{B^{1+\varepsilon} S^{2/3}}{R^{8/3}} \ll B^{1+\varepsilon,} 
\]
since $S\ll R^4$, which is satisfactory. It therefore remains to bound
\[
\sum_{\substack{(s,t) \in Z_1 \\ \norm{(s,t)}_\infty \sim R \\ \mathrm{N}(\Delta(s,t)) \ll R}} n_1(B,s,t).
\]
We would like to apply Lemma~\ref{lem.quadratic_lemma_for_dp4_broberg} in order to bound $n_1(B,s,t)$ so we need to make sure the conditions are satisfied. Let $q(x,y,z) = Q(sx,ty,tx,sy) + az^2$. Then we have 
\[
q(0,y,z) = y^2 Q(0,t,0,s) + az^2,
\]
and
\[
q(x,0,z) = x^2 Q(s,0,t,0) + az^2.
\]
Given some $(s,t) \in K^2$ both of the above binary forms are singular only if $Q(s,0,t,0)=Q(0,t,0,s) = 0$. This can happen for at most $O(1)$ primitive points $(s,t) \in Z_1$ unless $Q(s,0,t,0)$ and $Q(0,t,0,s)$ are both identically zero. If this were the case, however, then it is straightforward to check that $X$ has a singular point.
We may bound the contribution to $\sum_{(s,t) \in Z_1} n_1(B,s,t)$ such that $Q(s,0,t,0)=Q(0,t,0,s) = 0$ clearly by $O(B^{1+\varepsilon})$. Otherwise, at least one of $q(0,y,z)$ or $q(x,0,z)$ is non-singular, whence it follows from Lemma~\ref{lem.quadratic_lemma_for_dp4_broberg} that for such $(s,t)$ we have 
\[
n_1(B,s,t) \ll \frac{B^{1+\varepsilon}}{R}.
\]
Finally, via Lemma~\ref{lem.binary_form_separable_bound} the number of  $(s,t) \in Z_1$ such that $\norm{(s,t)}_\infty \sim R$ and $\mathrm{N}(\Delta(s,t)) \ll R$ holds is bounded above by $R^{1+\varepsilon}$. We conclude that
\[
\sum_{\substack{(s,t) \in Z_1 \\ \norm{(s,t)}_\infty \sim R \\ \mathrm{N}(\Delta(s,t)) \ll R}} n_1(B,s,t) \ll R^{1+\varepsilon} \frac{B^{1+\varepsilon}}{R} \ll B^{1+\varepsilon},
\]
as desired.

\section{del Pezzo surfaces of degree 3--5}\label{Sec.dp345}
In this section we will prove Theorem~\ref{Th: TheTheorem} for the cases $d=3,4,5$. Let $X$ be a del Pezzo surface of degree $3\leq d\leq 5$ over $K$ and $U$ the complement of the exceptional curves. The anti-canonical divisor $-K_X$ induces an embedding $X\subset \PP^d$ realising $X$ as a smooth non-degenerate surface of degree $d$. Let $\bm{c}\in \PP^d(K)$ and denote by $H_{\bm{c}}\subset \PP^d(K)$ the hyperplane defined by $\bm{x}\cdot \bm{c}=0$. Moreover, we define $X_{\bm{c}}=H_{\bm{c}}\cap X$. By the adjunction formula, we have 
    \[
    2p_a(X_{\bm{c}})-2=0,
    \]
where $p_a$ denotes the arithmetic genus. It follows that if $X_{\bm{c}}$ is smooth and geometrically irreducible, then either $X(K)=\emptyset$ or $X_{\bm{c}}$ defines an elliptic curve over $K$.

From Lemma \ref{lem.siegel} we know that every $\bm{x}\in \PP^d(K)$ with $H(\bm{x})\leq B$ lies in $H_{\bm{c}}$ for some $\bm{c}\in \PP^d(K)$ with $H(\bm{c})\ll B^{1/d}$. We thus define
\[
N_{\bm{c}}(B)\coloneqq \{\bm{x}\in (X_{\bm{c}}\cap U)(K) \colon H(\bm{x})\leq B\} 
\]
and infer that
\begin{equation}\label{Eq:MainCount.dP345}
    N(B)\leq \sum_{\substack{\bm{c}\in \PP^d(K)\\ H(\bm{c})\ll B^{1/d}}}N_{\bm{c}}(B).
\end{equation}
Let $1\leq e\leq d$ denote the maximum of the degrees of the irreducible components of $X_{\bm{c}}$ defined over $K$. It will be convenient to consider separately the contribution from each $e$. Note that if $e\leq 2$, then $X_{\bm{c}}$ is either a union of lines, in which case we do not count its rational points, or it contains a conic defined over $K$. In particular, in this case $X$ admits a conic bundle structure over $K$ and we have the superior upper bound $N_U(B)\ll B^{1+\varepsilon}$ from Theorem \ref{Th: TheConicBundleTheorem} for $d=4,5$, so that it suffices to consider the contribution from $3\leq e \leq d$ when $d=4$ or $d=5$.
\subsection*{$\bm{e=d}$} Let us first consider the contribution from those $\bm{c}$ such that $X_{\bm{c}}$ is non-singular. In this case Proposition \ref{Prop.UpperBoundEllCurve} gives $N_{\bm{c}}(B) \ll  B^\varepsilon$. Moreover, the number of available $\bm{c}$ is $O(B^{(d+1)/d})$ by Lemma~\ref{Le: Number.OK.Points} so that we get an overall contribution of $O(B^{(d+1)/d+\varepsilon})$ to \eqref{Eq:MainCount.dP345}, which is sufficient. 

Next we assume that $X_{\bm{c}}$ is irreducible but singular. In this case $N_{\bm{c}}(B)\ll B^{2/d}$ by Proposition~\ref{lem.irreducible_curve_uniform}. In addition, $X_{\bm{c}}$ being singular implies that $\bm{c}\in X^*(K)$. We know that $\deg(X^*)=12$ by Proposition~\ref{Prop: DualReflexive}, so that by Proposition \ref{Prop.DimGrowth} the number of such $\bm{c}$ with $H(\bm{c})\ll B^{1/d}$ is $O(B^{(d-2)/d})$. Therefore, we get a contribution of $O(B^{(d+1)/d})$, which is again satisfactory. 

\subsection*{$\bm{e=d-1}$} 
This case turns out to be the most difficult one and we have to treat it individually for each value of $d$. Note that this case occurs precisely when $X_{\bm{c}}=L\cup D$, where $L\subset X$ is a line and $D\subset X_{\bm{c}}$ is an irreducible curve over $K$ of degree $d-1$.

\subsubsection*{$d=5$} We now treat the contribution from those hyperplanes for which $X_{\bm{c}}=D\cup L$, where $D$ is an irreducible quartic curve and $L$ a line contained in $X$. There are at most 10 lines in $X$ defined over $D$, and so we may restrict our attention to a specific one. After a suitable change of variables we can assume that $L$ is given by $x_2=\cdots = x_5=0$. Furthermore, any hyperplane containing $L$ takes the shape $H_{\bm{c}}=\{c_2x_2+\cdots +c_5x_5=0\}$ for some $[0, 0 , c_2, \hdots , c_5]\in \PP^5(K)$. We may take a representative $\bm{c}\in Z_5$ and assume without loss of generality that $\norm{c_5}=\norm{\bm{c}}\asymp H(\bm{c})^{1/s_K}$. 

Let $p_1=[1, 0, \hdots , 0]$ and consider the projection $X\setminus \{p_1\} \to \PP^4$. Since $p$ is a non-singular point of $X$, the closure of the image $Y\subset \PP^4$ is a surface of degree 4. In fact, the morphism corresponds to blowing up the point $p_1$ and then contracting the strict transforms of the resulting $(-2)$-curve and so $Y$ is a singular del Pezzo surface of degree 4. More explicitly, since $p_1$ lies on a line, the point $p_0=[1, 0 , 0, 0 , 0]$ will be a singularity.  Any (possibly singular) del Pezzo surface of degree 4 inside $\PP^4$ can be written as the intersection of two quadrics. Since $p_0\in Y$, we can write $Y=V(Q_1,Q_2)$ with 
\begin{align}\label{Eq: WeakdP4}
\begin{split}
    Q_1(x) &=x_1L_1(x_2,x_3,x_4,x_5)-q_1(x_2,x_3,x_4,x_5), \\
    Q_2(x)&=x_1L_2(x_2,x_3, x_4, x_5)-q_2(x_2,x_3,x_4,x_5)
    \end{split}
\end{align}
and where $L_i, q_i\in \O_K[x_2,\dots, x_5]$ are linear and quadratic forms respectively. The Jacobian criterion applied to the singular point $p_1$ implies that $L_1$ and $L_2$ are proportional, so that there exist $J\in \O_K[x_2,\dots, x_5]$ and $a\in \O_K$ such that $L_1=J$ and $L_2=aJ$. 

Let now $Z\subset \PP^3$ be the quadric surface defined by $aq_1(x_2,x_3,x_4,x_5)=q_2(x_2,x_3,x_4,x_5)$. The projection $[x_1, \hdots , x_5]\mapsto [x_2, \hdots , x_5]$ then defines a morphism $Y\setminus\{p_0\}\to Z$. Moreover, it is easily seen that \begin{align*}
\psi[(x_2,\dots, x_5])&=[q_1(x_2,\hdots, x_5), x_2J(x_2,\hdots, x_5), \hdots , x_5J(x_2,\hdots, x_5)]\\
    &=[aq_2(x_2,\dots, x_5), x_2J(x_2,\dots, x_5), \hdots , x_5J(x_2,\dots ,x_5)]
\end{align*}
is a well-defined inverse to the projection map from $Z\to Y\setminus\{p_0\}$ away from the locus $S\subset \PP^3$ defined by $J=q_1=q_2=0$.

As there are $O(1)$ rational points $[x_0, \hdots , x_5]$ in $D\setminus L$ above a point $[x_1, \hdots , x_5]$, it suffices to count $\bm{x}\in \PP^4(K)$ that lie on on the image of $D$ under the projection map with $H(\bm{x})\leq B$. Since $D\subset H_{\bm{c}}$, any such rational point takes the shape 
\[
[t_1, c_5t_2, c_5 t_3, c_5t_4, -(c_2t_2+c_3t_3+c_4t_4)]
\]
with $\bm{t}=[t_1, \hdots , t_4]\in \PP^3(K)$. 
For $g\in K[x_2,x_3,x_4,x_5]$, define 
\[
g_{\bm{c}}(t_2,t_3,t_4)= g(c_5t_2,c_5t_3,c_5t_4, -(c_2t_2+c_3t_3+c_4t_4)).
\]
Similarly, for $\bm{t}\in \PP^3(K)$, we define 
\[
H^{(\bm{c})}(\bm{t})=H(t_1,c_5t_2, c_5t_3, c_5t_4,-(c_2t_2+c_3t_3+c_4t_4)).
\]
Upon setting
\[
N'_{\bm{c}}(B)=\#\{\bm{t}\in\PP^3(K)\colon H^{(\bm{c})}(\bm{t})\leq B, aq_{1,\bm{c}}(t_2,t_3,t_4)=q_{2,\bm{c}}(t_2,t_3,t_4)=t_1J(t_2,t_3,t_4)\},
\]
it transpires from \eqref{Eq: WeakdP4} that $N_{\bm{c}}(B)\ll N'_{\bm{c}}(B)$.
Our goal is to show the following estimate, where by abuse of notation we denote by $H_{\bm{c}}$ also the hyperplane in $\PP^3$ defined by $c_2x_2+c_3x_3+c_4x_4+c_5x_5=0$.
\begin{prop}\label{Prop: degenerate.hyperplane.dp5}
    Assume that \begin{enumerate}[(i)]
        \item $H_{\bm{c}}\cap S=\emptyset$, where $S\subset \PP^3$ is the variety defined by $J=q_1=q_2=0$,
        \item $H_{\bm{c}}$ and $Z$ intersect transversally,
        \item $H_{\bm{c}}$ and $Z\cap \{J=0\}$ intersect transversally. \end{enumerate}
        Then 
        \[
        N_{\bm{c}}'(B)\ll B^{1/2+\varepsilon} \left(\norm{c_5}_\infty^{-1/2}+\norm{c_5}_\infty^{1/2}\mathrm{N}(\Delta(\bm{c}))^{-1/3}\right),
        \]
        where $\Delta(\bm{c})$ is the  discriminant of the quadratic form $aq_{1,\bm{c}}-q_{2,\bm{c}}$.
\end{prop}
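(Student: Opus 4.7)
The plan is to reduce the count to one on the smooth plane conic
\[
C_{\bm{c}}\colon F_{\bm{c}}(t_2, t_3, t_4) \coloneqq aq_{1,\bm{c}} - q_{2,\bm{c}} = 0
\]
via the projection $\bm{t} = (t_1, t_2, t_3, t_4) \mapsto (t_2, t_3, t_4)$ on the curve cut out by the equations in the definition of $N'_{\bm{c}}(B)$. Assumption~(ii) ensures $C_{\bm{c}}$ is smooth, while assumption~(i) forces the projection to be essentially injective on the points counted: if $(t_2, t_3, t_4) \in C_{\bm{c}}(K)$ and $J_{\bm{c}}(t_2, t_3, t_4) \neq 0$, then $t_1 = q_{1,\bm{c}}/J_{\bm{c}}$ is determined, while the locus $J_{\bm{c}} = q_{1,\bm{c}} = q_{2,\bm{c}} = 0$ is ruled out by (i). Writing $M = \norm{c_5}_\infty$, the constraint $H^{(\bm{c})}(\bm{t}) \leq B$ translates, after bookkeeping on the gcd of the vector $(t_1, c_5 t_2, c_5 t_3, c_5 t_4, -\sum c_i t_i)$, into $|t_i|_\nu \ll B^{1/s_K}/|c_5|_\nu$ for $i = 2, 3, 4$, so that the box for $(t_2, t_3, t_4)$ has volume $\asymp (B/M)^3$.

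I would first apply Corollary~\ref{Cor: ConicsDisc} to count points of $C_{\bm{c}}$ in this box. An elimination-theoretic argument in the spirit of \cite[Lemma~7]{broberg2001rational}, invoking transversality~(iii), produces $\mathrm{N}(\Delta_0(\bm{c})) \ll 1$. Corollary~\ref{Cor: ConicsDisc} then yields
\[
N'_{\bm{c}}(B) \ll B^\varepsilon \left(1 + \frac{B}{M\,\mathrm{N}(\Delta(\bm{c}))^{1/3}}\right),
\]
which implies the desired bound in the regime $B \leq M^3$.

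For the complementary range $B > M^3$ this estimate is too weak, and I would parameterize instead. Assuming $C_{\bm{c}}(K) \neq \emptyset$ (else $N'_{\bm{c}}(B) = 0$), one obtains a morphism $\widetilde\psi\colon \PP^1 \to \PP^3$ whose components are polynomials of degree at most $4$ in $(u,v)$ and whose image is the image of the curve in the $\bm{t}$-coordinates. The key point is that the post-composition with the $\bm{c}$-twisted embedding $\PP^3 \to \PP^4$ defining $H^{(\bm{c})}$ satisfies
\[
H^{(\bm{c})}(\widetilde\psi([u,v])) \gg M\,H([u,v])^4,
\]
because $c_5$ multiplies three of the five embedded coordinates linearly and the leading content of $\widetilde\psi$ can be normalized to be $O(1)$. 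Thus $H([u,v]) \ll (B/M)^{1/4}$, and Schanuel's theorem on $\PP^1(K)$ yields $O((B/M)^{1/2})$ admissible $[u,v]$, matching the first term of the target.

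The main obstacle is to make the $M$-saving in the parameterization argument precise: one must choose $\widetilde\psi$ so that its coefficients are not of size comparable to a positive power of $M$, lest the saving be absorbed into the $\norm{\widetilde\psi}^A$ factors arising from Lemma~\ref{Le: FunctorialityHeights}. A secondary difficulty is establishing $\mathrm{N}(\Delta_0(\bm{c})) \ll 1$ from the transversality conditions via elimination theory.
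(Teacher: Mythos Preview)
Your approach has a structural gap. First, the case distinction $B \leq M^3$ versus $B > M^3$ is vacuous: since $M = \norm{c_5}_\infty \leq \norm{\bm{c}}_\infty \asymp H(\bm{c}) \ll B^{1/5}$, one always has $M^3 \ll B^{3/5} < B$, so only the second regime ever occurs. In that regime your direct application of Corollary~\ref{Cor: ConicsDisc} gives $B/(M\,\mathrm{N}(\Delta)^{1/3})$, which exceeds the target term $B^{1/2}M^{1/2}/\mathrm{N}(\Delta)^{1/3}$ exactly when $B > M^3$. So everything rests on the parameterisation argument.

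There the obstacle you flag is genuine and not resolved. The conic $C_{\bm{c}}$ has coefficients of size $\asymp\norm{\bm{c}}^2$, and any rational point on it used to set up the parameterisation will typically have height a positive power of $M$; the quadratic forms $g_i$ inherit large coefficients, and Lemma~\ref{Le: FunctorialityHeights} then only produces $H^{(\bm{c})}(\widetilde\psi([u,v])) \gg \norm{\widetilde\psi}^{-A} H([u,v])^4$, which destroys the $M$-saving. There is no evident way to choose $\widetilde\psi$ with uniformly bounded coefficients as $\bm{c}$ varies.

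The paper circumvents this via a different splitting. From the height condition one extracts
\[
\norm{J_{\bm{c}}(\bm{t})}_\infty \cdot \norm{c_5}_\infty \cdot \norm{(t_2,t_3,t_4)}_\infty \leq B\,\mathrm{N}(\mathfrak{a}),
\]
where $\mathfrak{a}$ is the content ideal of the coordinate vector, and shows that either $\norm{J_{\bm{c}}(\bm{t})}_\infty \ll B^{1/2}M^{-1/2}\mathrm{N}(\mathfrak{b}_1)$ or $\norm{\bm{t}}_\infty \ll B^{1/2}M^{1/2}$ (after factoring $\mathfrak{a} = \mathfrak{b}_1\mathfrak{b}_2$ with $\mathfrak{b}_2 \mid \langle c_5\rangle$ up to bounded factors). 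In the first case one parameterises $C_{\bm{c}}$ by $(g_1,g_2,g_3)$ as you propose, but rather than bounding $H([s,t])$ directly one sets $Q(s,t) = J_{\bm{c}}(g_1,g_2,g_3)$, a binary quadratic form that is square-free by assumption~(iii), and counts $(s,t) \in Z_1$ with $\norm{Q(s,t)}_\infty$ small and $\mathfrak{b}_1 \mid Q(s,t)$ via Corollary~\ref{Cor: QuadRepII}. The crucial feature of that corollary is that its dependence on $\norm{Q}$ is only through $\norm{Q}^\varepsilon$, so the large coefficients of the parameterisation are rendered harmless. In the second case the box for $\bm{t}$ has side $B^{1/2}M^{1/2}$ rather than $B/M$, and Corollary~\ref{Cor: ConicsDisc} then gives precisely the second term of the target.

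A minor correction: the bound $\mathrm{N}(\Delta_0(\bm{c})) \ll 1$ is not a consequence of~(iii). It follows from the global fact that no hyperplane section of the quadric surface $Z$ can be a double line (since $Z$ is birational to $X$ and hyperplane sections of a del Pezzo surface are reduced); hence the $2\times 2$ minors have no common zero as polynomials in $\bm{c}$ and the Nullstellensatz applies uniformly. Assumption~(iii) is what guarantees that $Q(s,t)$ above is square-free.
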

Suppose for a moment that Proposition~\ref{Prop: degenerate.hyperplane.dp5} is proven already. After splitting $|c_5|_\nu$ for $\nu\mid\infty$ into dyadic intervals, one readily verifies that 
\[
\sum_{\substack{\bm{c}\in Z_3 \\ \norm{\bm{c}}_\infty \ll B^{1/5}}}\norm{c_5}_\infty^{-1/2}\ll B^{7/10}.
\]
Moreover, we can also split $\mathrm{N}(\Delta(\bm{c}))$ into dyadic intervals, say $\mathrm{N}(\Delta(\bm{c}))\sim \beta$ and $\norm{\bm{c}}_\infty \sim \alpha$ with $\alpha \ll B^{1/5}$ and $\beta \ll \alpha^6$. As $\Delta(\bm{c})$ is homogeneous of degree $6$, we can use Lemma~\ref{Le: Polysmall} to deduce that 
\[
\alpha^{1/2}\beta^{-1/3}\sum_{\substack{\bm{c}\in Z_3 \\ \norm{\bm{c}}_\infty\sim \alpha \\ \mathrm{N}(\Delta(\bm{c}))\sim \beta}} 1 \ll \alpha^{7/2}\beta^{-1/6}\ll \alpha^{7/2}.
\]
Summing over $\alpha$ gives $\sum_{\alpha \ll B^{1/5}}\alpha^{7/2}\ll B^{7/10}$. As there are $O(\alpha^{6\varepsilon})=O(B^\varepsilon)$ possibilities for $\beta$, in total we obtain a contribution of $B^{6/5+\varepsilon}$ under the assumption that the conditions of Proposition~\ref{Prop: degenerate.hyperplane.dp5} are satisfied. To deal with the remaining cases, we need the following lemma. 
\begin{lemma}\label{Le: Dim.BadHyper.dp5}
    There is a Zariski closed subset $W\subset \PP^3$ whose irreducible components have dimension at most 2 such that if $\bm{c}\in \PP^3$ fails (i), (ii) or (iii) in Proposition~\ref{Prop: degenerate.hyperplane.dp5}, then $\bm{c}$ lies in $W$. 
\end{lemma}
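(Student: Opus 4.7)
The plan is to show that each of the failure loci $W_1, W_2, W_3 \subset \PP^3$ corresponding to conditions (i), (ii) and (iii) is a proper Zariski closed subset of $\PP^3$, so that each of their irreducible components has dimension at most $2$; one then takes $W = W_1 \cup W_2 \cup W_3$. Being defined by incidence or tangency conditions, each $W_i$ is automatically Zariski closed, so the real content of the lemma is a dimension count that rules out the pathological case $W_i = \PP^3$.

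For (i), the scheme $S = V(J, q_1, q_2) \subset \PP^3$ is cut out by a linear form and two quadrics. I claim $S$ is $0$-dimensional. Otherwise $V(J) \cap V(q_1)$ and $V(J) \cap V(q_2)$ would share a $1$-dimensional component inside $V(J) \cong \PP^2$, forcing the restrictions $q_1|_{V(J)}$ and $q_2|_{V(J)}$ to admit a nontrivial common factor. A short manipulation of the equations~\eqref{Eq: WeakdP4} then shows that some nontrivial $K$-linear combination of $Q_1$ and $Q_2$ is divisible by $J$, which contradicts the irreducibility of $Y = V(Q_1, Q_2)$ as a del Pezzo surface. With $S$ finite,
\[
W_1 \;=\; \bigcup_{p \in S} \{\bm{c} \in \PP^3 \colon p \in H_{\bm{c}}\}
\]
is a finite union of hyperplanes in $\PP^3$, so in particular every irreducible component of $W_1$ is $2$-dimensional.

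For (ii) and (iii), I would invoke the standard dual variety dichotomy: $H_{\bm{c}}$ fails to meet a subvariety $V$ transversally exactly when $\bm{c}$ lies in the dual variety $V^*$ or $H_{\bm{c}}$ passes through a singular point of $V$. Applied to the quadric surface $Z \subset \PP^3$, this gives $W_2 \subset Z^* \cup \bigcup_{p \in Z_{\mathrm{sing}}} \{\bm{c} \colon p \in H_{\bm{c}}\}$; since $Z$ is a quadric of dimension $2$ in $\PP^3$, $\dim Z^* \leq 2$, and there are only finitely many singular points contributing hyperplanes. For (iii), the intersection $C = Z \cap V(J)$ is a plane conic in $V(J) \cong \PP^2$, and the rational map $\bm{c} \mapsto H_{\bm{c}} \cap V(J)$ from $\PP^3$ to $\widehat{V(J)} \cong \PP^2$ has $1$-dimensional fibres, so pulling back the dual curve $C^*$ (of dimension at most $1$) along this map yields a $2$-dimensional subvariety of $\PP^3$, to which one adds the hyperplanes coming from any singular points of $C$. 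The main subtlety throughout is to ensure $Z$ and $C$ are not entirely degenerate (e.g.\ $Z$ a double plane or $C$ a double line), which one rules out by tracing back through the construction to the smoothness of $X$ and the fact that $Y$ has only an isolated singularity at $p_0$.
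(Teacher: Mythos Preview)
Your overall strategy matches the paper's: treat the three failure loci separately, show each is contained in a proper closed subset of $\PP^3$, and identify the key geometric input as ruling out degeneracies of $S$, $Z$, and $C = Z\cap V(J)$. Two points deserve correction or sharpening.

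\textbf{The argument for (i) contains an incorrect step.} From $q_1|_{V(J)}$ and $q_2|_{V(J)}$ sharing a common linear factor it does \emph{not} follow that some $K$-linear combination $\alpha Q_1 + \beta Q_2$ is divisible by $J$. Since $Q_i = (\text{const})\,x_1 J - q_i$, this would require $\alpha q_1 + \beta q_2 \equiv 0$ on $V(J)$; but if $q_1|_{V(J)} = \ell m_1$ and $q_2|_{V(J)} = \ell m_2$ with $m_1, m_2$ linearly independent (e.g.\ $J = x_5$, $q_1 = x_2x_3$, $q_2 = x_2x_4$), no such $(\alpha,\beta)$ exists. The paper's argument is cleaner and avoids this: if $E\subset S$ is a positive-dimensional component, then for every point of $E$ both $Q_1$ and $Q_2$ vanish identically in $x_1$, so the closure of $\A^1\times E$ lies in $Y$; being $2$-dimensional it must equal the irreducible surface $Y$, forcing $Y\subset\{J=0\}$, which contradicts the non-degeneracy of the anti-canonically embedded del Pezzo surface.

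\textbf{For (ii) you need more than ``$Z$ not a double plane''.} If $Z$ had rank $2$ (a pair of distinct planes), its singular locus would be a line, and then \emph{every} hyperplane in $\PP^3$ would meet $Z_{\mathrm{sing}}$, giving $W_2=\PP^3$. So the required input is that $Z_{\mathrm{sing}}$ is $0$-dimensional, i.e.\ $\operatorname{rank}(aq_1-q_2)\geq 3$. The paper obtains this by observing that $Z$ is birational to $X$ and hence is itself a (possibly singular) del Pezzo surface, which has at most isolated singularities. Your closing sentence gestures at the right mechanism, but the example you give understates what must be excluded.
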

\begin{proof}
    We begin with a preliminary observation. We claim that the variety $S\subset \PP^3$ is 0-dimensional. Suppose for  a contradiction that there exists a positive dimensional irreducible component $E\subset S$. Then by looking at the defining equations of $Y$, it becomes clear that the closure $E'$ of $\A^1\times E$ is contained in $Y$. However, $E'$ has dimension 2 and $Y$ is an irreducible surface, so they must coincide. It is clear that $E'$ is contained in the hyperplane defined by $L$ inside $\PP^4$, but del Pezzo surfaces are  not contained in any hyperplane under the anti-canonical embedding.

   If $\bm{c}\in \PP^3$ fails (i), then $H_{\bm{c}}\cap S\neq\emptyset$. From what we have just shown, it follows that $S$ is a union of $O(1)$ points in $\PP^3$. In particular, $H_{\bm{c}}\cap S\neq\emptyset$ implies that $H_{\bm{c}}$ contains one of these points, which forces $\bm{c}$ to lie in one of $O(1)$ irreducible subvarieties of codimension at least 1 in $\PP^3$. 

    Next, assume that $\bm{c}$ does not satisfy (ii). This implies that $\bm{c}$ lies on the dual variety of $Z$ or intersects the singular locus of $Z$. As $Z$ is is birational to $X$, it is again a (possibly singular) del Pezzo surface and hence has at most isolated singularities. This again implies that $\bm{c}$ lies on $O(1)$ irreducible subvarieties of codimension at least 1 in $\PP^2$. 

    Finally, suppose that $\bm{c}$ fails (iii). Note that $Z\cap \{J=0\}$ cannot be a double line, as it corresponds birationally to a hyperplane section containing $L$ of $X$. As $X$ is smooth, this implies that hyperplane sections are reduced. It follows that $Z\cap \{J=0\}$ is isomorphic to a plane conic. If it is smooth, then the failure of (iii) implies that $\bm{c}$ lies on the dual variety of $Z\cap \{J=0\}$ inside $\PP^3$, which has codimension at least 1. When it is not smooth, then it is a union of two lines that intersect in a unique point $P\in \PP^3$ and $H_{\bm{c}}$ has tangential intersection if and only if $P\in H_{\bm{c}}$, which again forces $\bm{c}$ to lie on a linear subspace of $\PP^3$ of dimension 2. 
\end{proof}
Suppose now that $\bm{c}$ does not meet the requirements of Proposition~\ref{Prop: degenerate.hyperplane.dp5}. By Proposition~\ref{lem.irreducible_curve_uniform} we have $N_{\bm{c}}(B)\ll B^{1/2}$. Moreover, it follows from Lemma~\ref{Le: Dim.BadHyper.dp5} that $\bm{c}$ lies on $O(1)$ irreducible subvarieties of dimension at most 2 inside $\PP^3$. By \eqref{Eq:UnifVariety} the number of such $\bm{c}\in \PP^3(K)$ with $H(\bm{c})\ll B^{1/5}$ is $O(B^{3/5})$. Hence we get an overall contribution of $O(B^{11/10})$ to \eqref{Eq:MainCount.dP345} from this case, which is sufficient. To complete the case $e=4$ and $d=5$, we are therefore left with proving Proposition~\ref{Prop: degenerate.hyperplane.dp5}.
\begin{proof}[Proof of Proposition~\ref{Prop: degenerate.hyperplane.dp5}]

 Let $\bm{t}=(t_2,t_3,t_4)\in Z_2$ be a representative for $\bm{t}\in \PP^2(K)$ such $aq_{1,\bm{c}}(\bm{t})=q_{2,\bm{c}}(\bm{t})$ and write
\[
\mathfrak{a}=\langle q_{1,\bm{c}}(\bm{t}), c_5t_2J_{\bm{c}}(\bm{t}), c_5t_3J_{\bm{c}}(\bm{t}), c_5t_4J_{\bm{c}}(\bm{t}), -J_{\bm{c}}(\bm{t})(c_2t_2+c_3t_3+c_4t_4)\rangle.
\]
Let $\mathfrak{C}$ be the ideal generated by the resultant of the homogeneous forms $q_{1,\bm{c}}, q_{2,\bm{c}},J_{\bm{c}}$. Note that if $\mathfrak{C}=0$, then $J_{\bm{c}}, q_{1,\bm{c}}$ and $q_{2,\bm{c}}$ share a common root in $\PP^1$, which implies that $H_{\bm{c}}$ has non-empty intersection with $S$. As we assume that (i) holds, this is impossible. Since $\norm{\bm{c}}_\infty \ll B^{1/5}$, we immediately get $\mathrm{N}(\mathfrak{C})\ll B^A$. In addition, because $\bm{t}\in Z_2$, we clearly have $\mathfrak{a}\mid \mathfrak{A} \mathfrak{C}\langle c_5\rangle $, where $\mathfrak{A}=\mathfrak{a}_1\cdots \mathfrak{a}_h$ and $\mathfrak{a}_1,\dots, \mathfrak{a}_h$ are the fixed representatives of the class group of $K$.

We then have $H^{(\bm{c})}(\psi(\bm{t}))\leq B$ if and only if 
\begin{equation}\label{Eq: HeightCondition.dp5}
\norm{q_{1,\bm{c}}(\bm{t}), c_5t_2J_{\bm{c}}(\bm{t}), c_5t_3J_{\bm{c}}(\bm{t}), c_5t_4J_{\bm{c}}(\bm{t}), -J_{\bm{c}}(\bm{t})(c_2t_2+c_3t_3+c_4t_4)}_\infty\leq B \mathrm{N}(\mathfrak{a}).    
\end{equation}

Note that in the above display we may also replace $q_{1,\bm{c}}$ by $aq_{2,\bm{c}}$. As we assume that $q_{1,\bm{c}}, q_{2,\bm{c}}$ and $J_{\bm{c}}$ do not have a common root in $\overline{K}$, the inequality \eqref{Eq: HeightCondition.dp5} together with Lemma~\ref{Le: FunctorialityHeights} implies that 
\[
\norm{\bm{t}}_\infty \ll (B\mathrm{N}(\mathfrak{a}))^{A}. 
\]
Let us now write $\mathfrak{a}=\mathfrak{b}_1\mathfrak{b}_2$, where $\mathfrak{b}_1=\langle \mathfrak{a}, J_{\bm{c}}(\bm{t})\rangle$, so that in particular $\mathfrak{b}_1\mid \mathfrak{C}$ and $\mathfrak{b}_2\mid \mathfrak{A}\langle c_5\rangle $. 
From \eqref{Eq: HeightCondition.dp5} we get that
\[
\norm{J_{\bm{c}}(\bm{t})}_\infty \norm{c_5}_\infty \norm{(t_2,t_3,t_4)}_\infty \leq H^{(\bm{c})}(\psi(\bm{t}))\mathrm{N}(\mathfrak{a})\leq B\mathrm{N}(\mathfrak{a}),
\]
and so we must have that $J_{\bm{c}}(s,t)\leq B^{1/2}\mathrm{N}(\mathfrak{b}_1)\norm{c_5}_\infty^{-1/2}$ or $\norm{\bm{t}}_\infty \leq B^{1/2}\mathrm{N}(\mathfrak{b}_2)\norm{c_5}_\infty^{-1/2}\ll B^{1/2}\norm{c_5}_\infty^{1/2}$, where we used that $\mathfrak{b}_2\mid \mathfrak{A}\langle c_5\rangle$. In particular, it follows from our discussion so far that 
\[
N_{\bm{c}}'(B)\leq \sum_{\mathfrak{b}_1\mid \mathfrak{C}}\sum_{\mathfrak{b}_2\mid \mathfrak{A}\langle c_5\rangle }\left(n_1((B\mathrm{N}(\mathfrak{a}))^{A}, B^{1/2}\mathrm{N}(\mathfrak{b}_1)\norm{c_5}_\infty^{-1/2})+n_2(B^{1/2}\norm{c_5}_\infty^{1/2})\right),
\]
where for positive reals $R_1, R_2$ we have set
\[
n_1(R_1,R_2)=\#\{\bm{t}\in Z_2\colon \norm{\bm{t}}_\infty \ll R_1, \norm{J_{\bm{c}}(\bm{t})}_\infty \ll R_2, \mathfrak{b}_1\mid J_{\bm{c}}(\bm{t}), aq_{1,\bm{c}}(\bm{t})=q_{2,\bm{c}}(\bm{t})\}
\]
and 
\[
n_2(R_1)=\#\{\bm{t}\in Z_2\colon \norm{\bm{t}}_\infty \ll R_1, aq_{1,\bm{c}}(\bm{t})=q_{2,\bm{c}}(\bm{t})\}.
\]
Let us first focus on estimating $n_1(R_1,R_2)$. Let $C_{\bm{c}}$ be the conic defined by $aq_{1,\bm{c}}(\bm{t})=q_{2,\bm{c}}(\bm{t})$ inside $\PP^2(K)$ and note that since we assume that (ii) holds, $C_{\bm{c}}$ is geometrically irreducible. There are two possibilities: Either there is no point $\bm{t}\in C_{\bm{c}}(K)$ with $H(\bm{t})\ll R_1$, in which case $n_1(R_1,R_2)=0$, or such a point exists and we can use it to obtain a parameterisation $\PP^1(K)\to C_{\bm{c}}(K)$. Explicitly, this is done by sending a line through the point to its unique residual intersection point with the conic. In this way we obtain quadratic forms $g_1,g_2,g_2\in \O_K[u,v]$ without a common factor such that the map $\psi \colon \PP^1\to C_{\bm{c}}$ given by $[s, t]\mapsto [g_1(s,t), g_2(s,t), g_3(s,t)]$ gives a bijection of $\PP^1(K)$ with $C_{\bm{c}}(K)$. As we assume that the height of the initial point is bounded by $R_1$, it is clear that we can take the quadratic forms in such a way that $\norm{g_i}\ll R_1^A$.  

Let us  define $Q(s,t)=J_{\bm{c}}(g_1(s,t),g_2(s,t),g_3(s,t))$. We  claim that $Q(s,t)$ is square-free. To see this, note that if it has a double root $[s_0,t_0]$, then  $P_0=[g_1(s_0,t_0), g_2(s_0,t_0), g_3(s_0,t_0)]$ will satisfy $J_{\bm{c}}(P_0)=0$ and $P_0\in C_{\bm{c}}$. In particular, it will be a double point of $C_{\bm{c}}\cap \{J_{\bm{c}}=0\} = Z\cap \{J=0\}\cap H_{\bm{c}}$. As we assume that (iii) holds, this is impossible, which verifies the claim.

It is clear that $\norm{Q}\ll \norm{J_{\bm{c}}}\max_{i=1,2,3}\norm{g_i}\ll B^A R_1^A$, so that if $\norm{Q(s,t)}_\infty \ll R_2$ holds, then we must also have $\norm{(s,t)}_\infty \ll (BR_1R_2)^A$ for any $(s,t)\in Z_1$ by Lemma~\ref{Le: FunctorialityHeights}. In particular, we have 
\[
n_1(R_1,R_2) \leq \#\{(s,t)\in Z_1\colon \norm{(s,t)}_\infty \ll (BR_1R_2)^A, \mathfrak{b}_1\mid Q(s,t), \norm{Q(s,t)}_\infty \ll R_2\}.
\]
As $Q$ is square-free, we can invoke Corollary~\ref{Cor: QuadRepII} to bound this last quantity and deduce that
\begin{align*}
    n_1((B\mathrm{N}(\mathfrak{a}))^{A}, B^{1/2}\mathrm{N}(\mathfrak{b}_1)\norm{c_5}_\infty^{-1/2}) \ll (B^{A}\mathrm{N}(\mathfrak{a})^{A}\norm{Q})^\varepsilon  \frac{B^{1/2}}{\norm{c}_\infty^{1/2}}
    \ll B^{1/2+\varepsilon}\norm{c_5}_\infty^{-1/2},
\end{align*}
where we used that $\mathrm{N}(\mathfrak{a}), \mathrm{N}(\mathfrak{b}_1), \norm{Q}\ll B^A$.

Next we turn to $n_2(R_1)$. Let $\Delta(\bm{c})$ be the discriminant of the quadratic form $aq_{1,\bm{c}}-q_{2,\bm{c}}$ and let $\Delta_0(\bm{c})$ be the ideal generated by the $2\times 2$ minors of the matrix underlying $aq_{1,\bm{c}}-q_{2,\bm{c}}$. Note that as we assume that (ii) holds, we must have $\Delta(\bm{c})\neq 0$. Applying Corollary \ref{Cor: ConicsDisc} directly to $n_2(R_1)$ gives 
\[
n_2(B^{1/2}\norm{c_5}_\infty^{1/2})\ll \left(1+\frac{B^{1/2}\norm{c_5}_\infty^{1/2}\mathrm{N}(\Delta_0(\bm{c}))^{1/2}}{\mathrm{N}(\Delta(\bm{c}))^{1/3}}\right)\mathrm{N}(\Delta(\bm{c}))^\varepsilon.
\]
We clearly have $\mathrm{N}(\Delta(\bm{c}))^\varepsilon \ll B^\varepsilon$. Moreover, we claim that $\mathrm{N}(\Delta_0(\bm{c}))$ is in fact bounded. To see this, first assume that the $2\times 2$ minors $M_{ij}(\bm{c})$ have a common zero in $\overline{K}$. This would imply that a hyperplane section of the  quadric surface $Z$ is a double line. However, hyperplane sections of $Z$ correspond birationally to hyperplane sections containing $L$ of our original quintic del Pezzo surface $X$ and hence are reduced.
By Hilbert's Nullstellensatz we can find forms $f_{ijk}\in \O_K[x_1,\dots, x_4]$ and $a_k\in \O_K\setminus\{0\}$ such that 
\[
a_kx_k^d=\sum f_{ijk}M_{ij}(x_1,\dots, x_4)
\]
as an identity in $\O_K[x_1,\dots, x_4]$ for $k=1,\dots, 4$. It follows that \[
\mathrm{N}(\Delta_0(\bm{c}))\ll \mathrm{N}(a_1c_1^d,\dots, a_4c_4^d)\ll 1\]
for $\bm{c}\in Z_3$, as claimed. Using the usual divisor bound for ideals, it follows from our discussion so far that 
\begin{align*}
N_{\bm{c}}(B)&\ll \sum_{\mathfrak{b}_1\mid\mathfrak{C}}\sum_{\mathfrak{b}_2\mid \mathfrak{c_5}}\left(B^{1/2+\varepsilon}\norm{c_5}^{-1/2}_\infty + \frac{B^{1/2+\varepsilon }\norm{c_5}_\infty^{1/2}}{\mathrm{N}(\Delta(\bm{c}))^{1/3}}\right)\\
&\ll B^{1/2+\varepsilon} \left(\norm{c_5}_\infty^{-1/2}+\norm{c_5}_\infty^{1/2}\mathrm{N}(\Delta(\bm{c}))^{-1/3}\right),  
\end{align*}
which is what we wanted to show.
\end{proof}

\subsection*{$d=4$} Next we assume $X_{\bm{c}}=D\cup L$, where $L\subset X$ is a line and $D\subset X$ is an irreducible cubic curve, both defined over $K$. There are at most 16 lines in $X$, and so we may restrict our attention to a specific one. After a suitable change of variables $L$ is given by $x_2=x_3=x_4=0$ if we work with coordinates $x_0,\dots, x_4$ on $\PP^4$. It follows that $X$ is defined by 
\begin{equation}\label{Eq: Equation.dp4}
    x_0L_1 +x_1K_1 +Q_1 = x_0L_2+x_1K_2+Q_2=0,
\end{equation}
where $L_1, L_2, K_1, K_2 \in \O_K[x_2,x_3,x_4]$ are linear and $Q_1, Q_2 \in \O_K[x_2,x_3,x_3]$ are quadratic forms respectively. If $L_1$ and $L_2$ are proportional, then $[1,0, \hdots , 0 ]$ is a singular point of $X$, which is impossible. We can therefore eliminate $x_0$ from \eqref{Eq: Equation.dp4} to obtain the equation 
\begin{equation}\label{Eq: Equation.dp4.2}
    C(x_2,x_3, x_4)+x_1Q(x_2,x_3,x_4)=0
\end{equation}
for some cubic form $C\in \O_K[x_2,x_3,x_4]$. Any hyperplane containing $L$ is defined by $\bm{c}\in \PP^4(K)$ of the shape $\bm{c}=[0, 0, c_2 , c_3 , c_4]$. Without loss of generality we may assume that $(0,0, c_2,c_3,c_4)\in Z_4$ is a representative of $\bm{c}$ such that $\norm{c_4}=\norm{\bm{c}}$, so that $\norm{c_4}\asymp H(\bm{c})^{1/s_K}$. Any rational point $\bm{t}\in H_{\bm{c}}$ takes the shape $\bm{t}=[t_0, t_1, c_4 t_2 ,c_4 t_3, -(c_2t_2+c_3t_3)]$ with $[t_0, \hdots , t_3]\in \PP^3(K)$. In particular, the equation \eqref{Eq: Equation.dp4.2} transforms into 
\begin{equation}\label{Eq: PlaneCubic.dp4}
    C_{\bm{c}}(t_2,t_3)=t_1Q_{\bm{c}}(t_2,t_3),
\end{equation}
where we write $G_{\bm{c}}(t_2,t_3)=G(c_4t_2,c_4t_3, -c_2t_2-c_3t_3)$ for any $G\in\O_K[x,y,z]$. Moreover, it suffices to count $\bm{t}'=[t_1, t_2, t_3]\in \PP^2(K)$ such that \eqref{Eq: PlaneCubic.dp4} holds, since there are $O(1)$ available $\bm{t}$ lying above a given $\bm{t}'$, and we redefine $N_{\bm{c}}(B)$ to be this quantity. Our goal is to establish the following estimate.
\begin{prop}\label{Prop: Degenerate.hyperplane.dp4}
    If $Q_{\bm{c}}$ is square-free, then 
    \[
    N_{\bm{c}}(B) \ll \frac{B^{2/3+\varepsilon}}{\norm{{c}_4}_\infty^{2/3}}.
    \]
\end{prop}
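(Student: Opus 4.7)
My plan is to parametrize the residual cubic $D$ rationally via its node. By the square-free hypothesis on $Q_{\bm{c}}$, the plane cubic $\{C_{\bm{c}}(t_2,t_3) = t_1 Q_{\bm{c}}(t_2,t_3)\}$ has an ordinary node at $[1,0,0]$, and projection from it gives the parametrization
\[
\psi\colon \PP^1\to \PP^2,\qquad [s,t]\mapsto [C_{\bm{c}}(s,t),\,sQ_{\bm{c}}(s,t),\,tQ_{\bm{c}}(s,t)].
\]
The two pre-images of the node (the zeros of $Q_{\bm{c}}$) correspond to the intersection $D\cap L$ on the del Pezzo surface and lie in the exceptional locus we discard.

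I would then lift $\psi$ to a morphism $\phi\colon \PP^1\to D\subset\PP^4$ by solving for $t_0$ from the defining quadric $x_0L_1+x_1K_1+Q_1=0$ of $X$. Substituting the parametrization yields the polynomial identity $t_0\,L_{1,\bm{c}}(s,t) = -(C_{\bm{c}}K_{1,\bm{c}} + Q_{\bm{c}}Q_{1,\bm{c}})(s,t)$, forcing $L_{1,\bm{c}}$ to divide the right-hand side; denoting the quotient $M\in\O_K[s,t]$ (a cubic polynomial) we obtain
\[
\phi([s,t]) = [M,\, C_{\bm{c}},\, c_4 s Q_{\bm{c}},\, c_4 t Q_{\bm{c}},\, -(c_2 s + c_3 t)\,Q_{\bm{c}}],
\]
a degree-$3$ morphism whose five components are cubic polynomials in $(s,t)$ with coefficients polynomial in $\bm{c}$, so that $\norm{\phi}\ll \norm{\bm{c}}^{O(1)}\ll B^{O(\varepsilon)}$. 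Up to $O(1)$ exceptional points we therefore have $N_{\bm{c}}(B)\leq \#\{(s,t)\in Z_1:H(\phi([s,t]))\leq B\}$.

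The core step is to show that $H(\phi([s,t]))\leq B$ implies the inequality $\norm{(s,t)}_\infty^3\cdot \norm{c_4}_\infty \ll B^{1+\varepsilon}$; Lemma~\ref{Le: Number.OK.Points} then bounds the number of such primitive $(s,t)$ by $(B/\norm{c_4}_\infty)^{2/3+\varepsilon}$, which is the desired estimate. I would prove this inequality by clearing denominators to an integral representative of $\phi([s,t])$ in $\O_K^5$ and lower-bounding its adelic sup-norm. At each archimedean place $\nu$, the third and fourth coordinates carry an explicit factor $|c_4|_\nu\asymp \norm{\bm{c}}_\nu$, so their product over $\nu\mid\infty$ yields a factor of $\norm{c_4}_\infty$; meanwhile the content ideal of the integral representative is, for generic $(s,t)$, of bounded norm via a resultant argument applied to the defining polynomials of $\phi$. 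The $(s,t)$ for which the content ideal is larger than this lie near the roots of $Q_{\bm{c}}(s,t)$ (a binary quadratic form in $(s,t)$ whose square-freeness is exactly the hypothesis of the proposition), and their contribution is controlled by a divisor-sum argument in the spirit of Corollary~\ref{Cor: QuadRepII} and Lemma~\ref{Le: Divisor.bound.elements}.

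The principal obstacle lies in this last step. The product formula $\prod_\nu|c_4|_\nu = 1$ precludes a naive symmetric adelic inequality from capturing $\norm{c_4}_\infty$, so one must carry out an asymmetric analysis separating the archimedean and non-archimedean contributions to $H(\phi)$, together with a careful dyadic summation over the possible values of the content ideal, to extract the required $\norm{c_4}_\infty^{2/3}$ saving.
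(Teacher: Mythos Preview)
Your parametrization of $D$ via projection from the node $[1,0,0]$ is exactly the paper's, and you have correctly located the obstacle: a symmetric adelic inequality cannot by itself extract $\norm{c_4}_\infty$. But your diagnosis of \emph{why} the content ideal can be large is incomplete, and this is a genuine gap.

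You attribute the large-content cases to $(s,t)$ near the zeros of $Q_{\bm{c}}$, to be handled by Corollary~\ref{Cor: QuadRepII}. That is only half the story. Writing $\mathfrak{a}$ for the ideal generated by the coordinates $C_{\bm{c}}(s,t)$, $c_4 s Q_{\bm{c}}(s,t)$, $c_4 t Q_{\bm{c}}(s,t)$, $-(c_2s+c_3t)Q_{\bm{c}}(s,t)$, one has $\mathfrak{a}=\mathfrak{b}_1\mathfrak{b}_2$ with $\mathfrak{b}_1=\langle\mathfrak{a},Q_{\bm{c}}(s,t)\rangle$ dividing the resultant of $Q_{\bm{c}}$ and $C_{\bm{c}}$, and $\mathfrak{b}_2$ dividing $\langle c_4,\, c_2s+c_3t\rangle$. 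The factor $\mathfrak{b}_2$ can have norm up to $\mathrm{N}(c_4)\asymp\norm{c_4}_\infty$ and has nothing to do with the roots of $Q_{\bm{c}}$; a resultant argument on the coordinate polynomials of $\phi$ bounds $\mathrm{N}(\mathfrak{a})$ only by $\norm{\bm{c}}^{O(1)}$, which does not recover the required saving. Consequently your target inequality $\norm{(s,t)}_\infty^3\,\norm{c_4}_\infty\ll B^{1+\varepsilon}$ is simply false in general.

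The paper replaces that single inequality by a dichotomy. From
\[
\norm{c_4}_\infty\,\norm{Q_{\bm{c}}(s,t)}_\infty\,\norm{(s,t)}_\infty \leq B\,\mathrm{N}(\mathfrak{b}_1)\mathrm{N}(\mathfrak{b}_2)
\]
one gets either $\norm{Q_{\bm{c}}(s,t)}_\infty\leq \mathrm{N}(\mathfrak{b}_1)B^{2/3}\norm{c_4}_\infty^{-2/3}$, which together with $\mathfrak{b}_1\mid Q_{\bm{c}}(s,t)$ is indeed handled by Corollary~\ref{Cor: QuadRepII}, or $\norm{(s,t)}_\infty\leq \mathrm{N}(\mathfrak{b}_2)B^{1/3}\norm{c_4}_\infty^{-1/3}$. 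In the second case the point you are missing is that $\mathfrak{b}_2\mid c_4$ forces the \emph{linear} congruence $c_2s+c_3t\equiv 0\pmod{\mathfrak{b}_2}$; since $\langle c_2,c_3,c_4\rangle$ has bounded norm this pins $(s,t)$ to $O(1)$ residue classes modulo $\mathfrak{b}_2$, and the lattice-point count in the enlarged box then collapses to $O(B^{2/3}\norm{c_4}_\infty^{-2/3})$. Summing over the $O(B^\varepsilon)$ divisors $\mathfrak{b}_1,\mathfrak{b}_2$ finishes. (Your extra step of solving for $t_0$ to produce the component $M$ is harmless but unnecessary: the paper just notes there are $O(1)$ points of $D$ above each $[t_1,t_2,t_3]$.)
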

Suppose for a moment that Proposition~\ref{Prop: Degenerate.hyperplane.dp4} holds. The contribution from such $\bm{c}$ to \eqref{Eq:MainCount.dP345} is then
\[
B^{2/3+\varepsilon}\sum_{\substack{\bm{c}\in Z^2 \\ \norm{\bm{c}}_\infty \ll B^{1/4}}}\norm{{c}_4}_\infty^{-2/3}\ll B^{5/4+\varepsilon},
\]
which is satisfactory. It remains to deal with the case when $Q_{\bm{c}}$ has a repeated root. Note that the cubic surface $Y$ defined by \eqref{Eq: Equation.dp4.2} is birational to $X$ and in particular again a (possibly singular) del Pezzo surface. If the curve $Z$ defined $Q=0$ defines a double line in $\PP^2$, then one can check via the Jacobian criterion that the singular locus of $Y$ has dimension at least $1$, which is impossible. Thus $Z$ has only isolated singularities. Now if $Q_{\bm{c}}$ has a double root, then $H_{\bm{c}}$ intersects the singular locus of $Z$, or $\bm{c}$ lies on the dual variety of $Z$ inside $\PP^2$. Either case defines a variety inside $\PP^2$ of dimension at most 1. In particular, there are most $O(B^{1/2})$ available $\bm{c}$ with $H(\bm{c})\ll B^{1/4}$ by \eqref{Eq:UnifVariety}. Moreover, Proposition \ref{lem.irreducible_curve_uniform} implies that $N_{\bm{c}}(B)\ll B^{2/3}$, so that we have a contribution of $O(B^{7/6})$, which is sufficient.

\begin{proof}[Proof of Proposition~\ref{Prop: Degenerate.hyperplane.dp4}]
If $D$ is irreducible, but not geometrically irreducible, then it contains $O(1)$ points by B\'ezout's theorem, which is sufficient. So we assume from now on that $D$ is geometrically irreducible. In particular, $Q_{\bm{c}}$ and $C_{\bm{c}}$ do not share a common factor. The equation \eqref{Eq: PlaneCubic.dp4} therefore defines a singular cubic plane curve $C'\subset \PP^2$ with a singularity at $t_0=[1, 0 , 0]$ and hence is rational over $K$. We obtain an explicit bijective morphism $\PP^1\to C'$ by sending a line to the unique residual intersection point with $C'$ and $t_0$. Explicitly, this is given by $[s, t]\mapsto [C_{\bm{c}}(s,t),  sQ_{\bm{c}}(s,t),  tQ_{\bm{c}}(s,t)]$. It follows that 
\[
N_{\bm{c}}(B)\ll \#\{ [s,t]\in \PP^1(K)\colon H([C_{\bm{c}}(s,t), c_4sQ_{\bm{c}}(s,t), c_4tQ_{\bm{c}}(s,t), -Q_{\bm{c}}(s,t)(c_2 s +c_3 t)])\ll B\}
\]
and we define the last quantity to be $N'_{\bm{c}}(B)$.  As $\norm{\bm{c}}\ll B^{1/4s_K}$, it is clear that $\norm{Q_{\bm{c}}}, \norm{C_{\bm{c}}}\ll B^A$. Thus if $(s,t)\in Z_1$ is counted by $N'_{\bm{c}}(B)$, then Lemma~\ref{Le: FunctorialityHeights} implies that $\norm{(s,t)}_\infty \ll B^A$. Let $\mathfrak{C}'$ be the ideal generated by the resultant of $Q_{\bm{c}}(s,t)$ and $C_{\bm{c}}(s,t)$ and define $\mathfrak{C}=\mathfrak{C}'\mathfrak{a}_1\cdots \mathfrak{a}_h$. By construction, as $(s,t)\in Z_1$, we must then have $\langle C_{\bm{c}}(s,t), sQ_{\bm{c}}(s,t), tQ_{\bm{c}}(s,t)\rangle \mid \mathfrak{C}$ for any $(s,t)\in Z_1$ and $\mathrm{N}(\mathfrak{C})\ll \max\{\norm{Q_{\bm{c}}}, \norm{C_{\bm{c}}}\}^A\ll B^A$. Suppose now that 
\[
\mathfrak{a}=\langle C_{\bm{c}}(s,t), c_4sQ_{\bm{c}}(s,t), c_4tQ_{\bm{c}}(s,t), Q_{\bm{c}}(s,t)(c_2s+c_3t)\rangle.
\]
If we write $\mathfrak{b}_1=\langle \mathfrak{a},Q_{\bm{c}}(s,t)\rangle$ and $\mathfrak{a}=\mathfrak{b}_1\mathfrak{b}_2$, then we must have $\mathfrak{b}_1\mid \mathfrak{C}$ and  $\mathfrak{b}_2\mid \langle c_4, c_2s+c_3t\rangle$. Moreover, for any $(s,t)\in Z_1$ we have 
\begin{align*}
H([C_{\bm{c}}(s,t), & c_4sQ_{\bm{c}}(s,t), c_4tQ_{\bm{c}}(s,t), -Q_{\bm{c}}(s,t)(c_2 s +c_3 t)])\\
&=\mathrm{N}(\mathfrak{a})^{-1}\norm{C_{\bm{c}}(s,t), c_4sQ_{\bm{c}}(s,t), c_4tQ_{\bm{c}}(s,t), -Q_{\bm{c}}(s,t)(c_2 s +c_3 t)}_\infty\\
&\geq \mathrm{N}(\mathfrak{a})^{-1}\norm{c_4}_\infty\norm{Q_{\bm{c}}(s,t)}_\infty \norm{(s,t)}_\infty
\end{align*}
In particular, if $(s,t)$ is counted by $N'_{\bm{c}}(B)$, then we must have 
\begin{equation}\label{Eq: twocases.dp4}
\norm{Q_{\bm{c}}(s,t)}_\infty\leq \mathrm{N}(\mathfrak{b}_1)B^{2/3}\norm{c_4}_\infty^{-2/3}\quad\text{or}\quad \norm{(s,t)}_\infty \leq \mathrm{N}(\mathfrak{b}_2)B^{1/3}\norm{c_4}_\infty^{-1/3}.
\end{equation}
Let us first bound the contribution from those $(s,t)$ for which the first alternative holds. As we have $\mathfrak{b}_1\mid Q_{\bm{c}}(s,t)$, by Corollary \ref{Cor: QuadRepII} we have that the contribution is up to a constant at most
\begin{align*}
     B^\varepsilon \norm{Q_{\bm{c}}}^\varepsilon\frac{B^{2/3}}{\norm{c_4}_\infty^{2/3}}\ll \frac{B^{2/3+\varepsilon}}{\norm{c_4}_\infty^{2/3}}.
\end{align*}
Let us now deal with the contribution form those $(s,t)$ for which the second alternative in \eqref{Eq: twocases.dp4} holds. For any such $(s,t)$, we must have $\mathfrak{b}_2\mid c_2s+c_3t$. As $\mathfrak{b}_2\mid c_4$ and $\mathrm{N}(\langle c_2,c_3,c_4\rangle) \ll 1$, it follows that the cardinality of the image of $Z_1$ in
\[
\#\{ (s,t) \in (\O_K/\mathfrak{b}_2)^{2}\colon c_2s+c_3t\equiv 0 \mod \mathfrak{b}_2\}
\]
is $O(1)$. Denoting the the image of $Z_1$ in the set above by $\mathcal{T}$, any $(s,t)$ must be congruent to some element in $\mathcal{T}$ modulo $\mathfrak{b}_2$, and hence the number of such $(s,t)$ with $\norm{(s,t)}_\infty \ll B^{1/3}\mathrm{N}(\mathfrak{b}_2)\norm{c_4}_\infty^{-1/3}$ is $O(B^{2/3}\norm{c_4}_\infty^{-2/3})$. As $\mathfrak{b}_1\mid \mathfrak{b}$, $\mathfrak{b}_2\mid c_4$ and $\mathrm{N}(\mathfrak{C})\ll B^A$, $\mathrm{N}(c_4)\ll B^{1/4}$, the familiar divisor bound for ideals shows that number of available $\mathfrak{b}_1$ and $\mathfrak{b}_2$ is $O(B^\varepsilon)$. In summary, we have
\[
N_{\bm{c}}'(B)\ll \frac{B^{2/3+\varepsilon}}{\norm{c_4}_\infty^{2/3}},
\]
which is what we wanted to show.
\end{proof}
\subsubsection*{$d=3$} Suppose that $X_{\bm{c}}$ contains a line $L\subset X$. Since there are at most 27 lines in $X$ defined over $K$, it suffices to bound $N_{{\bm{c}}}(B)$ when $X_{\bm{c}}$ contains a fixed line $L\subset X$. After a suitable change of variables $L$ is given by $x_1=x_2=0$, in which case we may assume that $F$ is of the shape 
\[
F(\bm{x})=x_1Q_1(\bm{x})+x_2Q_2(\bm{x})
\]
for some quadratic forms $Q_1,Q_2\in \O_K[x_1,\dots, x_4]$. Moreover, the fact that $L\subset V_{\bm{c}}$ also implies that $\bm{c}=[s, t, 0, 0]$. Any $\bm{x}\in \PP^3(k)$ that lies on $L$ is necessarily of the form $\bm{x}=[tx_1, -sx_1, x_2,x_3]$. In particular, if $\bm{x}$ satisfies $F(\bm{x})=0$, then either $x_1=0$, in which case $\bm{x}\in L$, or $\bm{x}$ is a root of 
\[
Q_{s,t}(\bm{x})\coloneqq tQ_1(tx_1,-sx_1,x_2,x_3)-sQ_2(tx_1,-sx_1,x_2,x_3).
\]
It is clear that if $(x_1,x_2,x_3)\in Z_2$ satisfies $H(tx_1, -sx_1, x_2, x_3)\leq B$, then we must have $\norm{x_1}_\infty \ll B \norm{(s,t)}_\infty^{-1}$. Let $\Delta(s,t)$ be the discriminant of $Q_{s,t}$ considered as a ternary quadratic form in $(x_1,x_2,x_3)$, so that $\Delta(s,t)$ is homogeneous of degree 5. In addition, define $\Delta_0(s,t)$ to be the ideal generated by the $2\times 2$ minors of the matrix underlying $Q_{s,t}$. Then Lemma 7 of \cite{broberg2001rational} shows that $\mathrm{N}(\Delta_0(s,t))\ll 1$, and hence Proposition \ref{Prop: UniformConics} implies 
\[
N_{s,t}(B)\ll B^\varepsilon+\frac{B^{1+\varepsilon}}{\norm{(s,t)}^{1/3}_\infty \mathrm{N}(\Delta(s,t))^{1/3}},
\]
where we used the divisor estimate $\tau(\mathrm{N}(\Delta(s,t)))\ll B^\varepsilon$, since $\mathrm{N}(\Delta(s,t))\ll B^{5/3}$. There are $O(B^{2/3})$ available $(s,t)\in Z_1$ with $H(s,t)\ll B^{1/3}$ and hence if $B^\varepsilon$ dominates in the estimate above, we get a contribution of $O(B^{2/3+\varepsilon})$, which is sufficient. Let us now put $\norm{(s,t)}_\infty$ and $\mathrm{N}(\Delta(s,t))$ into dyadic intervals, say $\norm{(s,t)}_\infty \asymp \alpha $ and $\mathrm{N}(\Delta(s,t))\asymp \beta$, where $\alpha \ll B^{1/3}$ and $\beta \ll B^{5/3}$. By Lemma \ref{Le: Polysmall} the number of available $(s,t)$ is $O(\alpha \beta^{1/5})$, and hence if the second term dominates we get a contribution of 
\[
B^{1+\varepsilon} \alpha^{2/3}\beta^{-2/15}.
\]
Summing over dyadic intervals shows that the overall contribution is $O(B^{11/9+\varepsilon})$, which is sufficient and therefore completes our treatment of cubic surfaces.
\subsection*{$\bm{e=d-2}$} Note that this case is only relevant when $e=5$ and hence $d=3$. Suppose now that $X_{\bm{c}}= B\cup C$, where $C$ is irreducible of degree 3. If $B$ is a conic, we are in the context of Theorem \ref{Th: TheConicBundleTheorem}, so that we may assume that $B$ is the union of two skew lines $L_1, L_2$. There are $O(1)$ pairs of skew lines in $X$, and so we may restrict our attention to a fixed pair. Since $L_1$ and $L_2$ are skew, any hyperplane containing them must contain the three-dimensional linear space they span. This forces $\bm{c}$ to lie on a line in $\PP^5$, and hence the number of such hyperplanes of height $O(B^{1/5})$ is $O(B^{2/5})$. Moreover, by Proposition \ref{lem.irreducible_curve_uniform} the curve $C$ contains $O(B^{2/3})$ rational points of height $B$, so that the contribution is $O(B^{2/5+2/3})=O(B^{16/15})$, which is again satisfactory and thus completes our proof of Theorem \ref{Th: TheTheorem} for $d=3,4,5$.

\section{del Pezzo surfaces of degree 2}
The anticanonical model of a smooth del Pezzo surface of degree $2$ is given by a hypersurface in weighted projective space $\PP(2,1,1,1)$ in the variables $(y,u,v,w)$ of the form
\[
X:  y^2 = g(u,v,w),
\]
where $g\in \O_K[u,v,w]$ is a non-singular quartic form. 
For $[y, u, v, w]\in \PP(2,1,1,1)(K)$, define 
\[
H(y,u,v,w)=\prod_{\nu \in \Omega_K}\max\{|y|_\nu^{1/2}, |u|_\nu, |v|_\nu, |w|_\nu\}.
\]
Let $U\subset X$ be the complement of the exceptional curves of $X$. The counting function with respect to the anti-canonical height function of $X$ is then given by
\[
N(B) = \# \left\{ [y,u,v,w]\in U(K)\colon   H([y,u,v,w]) \leq B \right\}.
\]
Let $\pi \colon X\to \PP^2$ be the map induced by the anti-canonical divisor, that is $\pi([y, u, v, w])=[u, v, w]$. Given $\bm{c} \in \PP^2(K)$ we let $H_{\bm{c}}\subset \PP^2$ be the line defined by $\bm{x}\cdot \bm{c}=0$. We may consider the pullback of $H_{\bm{c}}$ under $\pi$. Writing $X_{\bm{c}}=\pi^{-1}(H_{\bm{c}})$, the defining curve for $X_{\bm{c}}$ can be written as
\[
y^2 = g_{\bm{c}}(x,t)
\]
for a suitable binary quartic form $g_{\bm{c}}$. If we write
\[
N_{\bm{c}}(B) = \# \left\{ \bm{t}\in (X_{\bm{c}}\cap U)(K)\colon H(\bm{t})\leq B\right\},
\]
then using Lemma~\ref{lem.siegel} we find
\[
N(B) \leq \sum_{\substack{\bm{c}\in \PP^2(K) \\ H(\bm{c})\ll B^{1/2}}} N_{\bm{c}}(B).
\]
We will split the study of $N_{\bm{c}}(B)$ into three cases. 

First consider the case where $g_{\bm{c}}(x,t)$ has no multiple factors. If $X_{\bm{c}}$ has no rational points of height at most $B$ then trivially $N_{\bm{c}}(B) = 0$ for such $\bm{c}$. Otherwise, let $\bm{x}_0$ be a rational point of $X_{\bm{c}}$ with $H( \bm{x}_0 ) \leq B$. Then $X_{\bm{c}}$ is non-singular and defines an elliptic curve, as is shown in~\cite[Chapter 8]{cassels_elliptic_book}. In particular, it is shown there that there exists an isomorphism (which is independent of $\bm{c}$) that birationally maps $X_{\bm{c}}$ to an elliptic curve in Weierstrass normal form
\[
E_{\bm{c}} \colon ZY^2 = X^3 + A_{\bm{c}}XZ^2 + B_{\bm{c}}Z^3.
\]
Note that any rational point $\bm{x}$ of $X_{\bm{c}}$ is mapped to a rational solution $P(\bm{x})$, say. Since the birational transformation only depends polynomially on the coefficients of $g_{\bm{c}}$ there exist absolute constants $A, C>0$ such that
\[
\norm{P(\bm{x})} \leq C \norm{g_{\bm{c}}}^A \norm{\bm{x}_0}^A \norm{\bm{x}}^A.
\]
Finally, since different rational points are mapped to different rational points under this birational transformation we find
\[
N_{\bm{c}}(B) \ll N(E_{\bm{c}}, CB^A),
\]
where $N(E,R)$ denotes the number of rational points of an elliptic curve $E\subset \PP^2$ up to height $R$ with respect to the usual height on $\PP^2$. Thus, by Proposition~\ref{Prop.UpperBoundEllCurve} we find that in this case
\[
N_{\bm{c}}(B) \ll B^\varepsilon.
\]
There are $O(B^{3/2})$ rational points $\bm{c}\in \PP^2(K)$ with $H(\bm{c})\ll B^{1/2}$, so that the total contribution in this case is bounded by $O_\varepsilon(B^{3/2+\varepsilon})$.

Next, we consider the case when $g_{\bm{c}}(x,t)$ has a multiple factor but $X_{\bm{c}}$ is geometrically irreducible. This implies that $g_{\bm{c}}=L^2Q$, where $L,Q\in \overline{K}[x,t]$. Note that in fact $L$ and hence also $Q$ must have coefficients in $\O_K$, because we assume $\cha(K)\neq 2$. Indeed, if not then $Q$ must also be square, which is impossible as then $y^2-g_{\bm{c}}$ is reducible. Hence we may assume that $L$ and $Q$ are both defined over $K$. There are most $O(1)$ possible choices $(x,t)\in \PP^1(K)$ such that $L(x,t)Q(x,t)=0$, which also forces $y=0$. It thus suffices to bound the contribution from those $(y,x,t)$ for which $L(x,t)Q(x,t)\neq 0$. If we write $z=y/L(x,t)$, then the equation $y^2=L^2(x,t)Q(x,t)$ implies that $z^2=Q(x,t)$. Moreover, if $H(y,x,t)\leq B$, then $z$ will have a representative in $\O_K$ with $\norm{z}_\infty \ll B^2$ and $(x,t)$ a representative in $\O_K^2$ with $\norm{(x,t)}_\infty \ll B$. It follows that
\[
N_{\bm{c}}(B) \leq \#\{(z,x,t)\in \PP^2(K)\colon \norm{(x,t)}_\infty \ll B, \norm{z}_\infty \ll B^{2}, z^2=Q(x,t)\}.
\]
The binary form $Q(x,t)$ must be square-free, as else $y^2-L^2Q$ would be reducible, a case that we excluded by assumption. In particular, $z^2=Q(x,t)$ defines an irreducible conic in $\PP^2$ and we can use Proposition \ref{Prop: UniformConics} to deduce that 
\[
N_{\bm{c}}\ll B^{4/3}.
\]
We will now bound the number of $\bm{c}\in \PP^2(K)$ such that $g_{\bm{c}}$ has a multiple factor. This happens precisely if the hyperplane $H_{\bm{c}}$ has singular intersection with the quartic curve $V(g)\subset \PP^2$ and hence $\bm{c}$ must lie on the dual curve $V(g)^*\subset \PP^2$ of $V(g)$. As $V(g)$ is a smooth quartic curve and $\cha(K)\neq 2,3$, the dual curve is an irreducible curve of degree 12. By Proposition~\ref{lem.irreducible_curve_uniform} it follows that the number of $\bm{c}\in V(g)^*$ with $H(\bm{c})\ll B^{1/2}$ is $O(B^{1/12})$. Therefore in total we get a contribution of $O(B^{17/12})$ from this case.

Finally and lastly we assume that $y^2-g_{\bm{c}}(x,t)$ is geometrically reducible. This happens precisely if $g_{\bm{c}}$ is a perfect square as a polynomial in $\overline{K}[x,t]$. In particular, any such $H_{\bm{c}}$ is a bitangent vector of $V(g)$. The 56 exceptional curves in $X$ correspond precisely to the preimages of the 28 bitangents of $V(g)$ under $\pi$, and so any such point on $X_{\bm{c}}$ is excluded from our count.

We conclude that in total we obtain $N(B) \ll_\varepsilon B^{3/2 + \varepsilon}$, which suffices for Theorem~\ref{Th: TheTheorem} when $d=2$.

\section{del Pezzo surfaces of degree 1}
Via the anticanonial embedding, a non-singular del Pezzo surface $X$ of degree $1$ may be realised as a hypersurface in weighted projective space $\PP(3,2,1,1)$ in the variables $(y,x,u,v)$ given by an equation of the form
\[
X: y^2 = x^3 + g(u,v)x + h(u,v),
\]
where $g$ and $h$ are binary forms of degrees $4$ and $6$, respectively. An anti-canonical height function is given by 
\[
H(\bm{t})=\prod_{\nu \in \Omega_K}\max\{|y|^{1/3}_\nu, |x|^{1/2}_\nu, |u|_\nu, |v|_\nu\}
\]
for $\bm{t}=[y,x,u,v]\in \PP(3,2,1,1)(K)$.
The counting function of interest is then given by
\[
N(B) = \# \{\bm{t}\in X(K)\colon H(\bm{t})\leq B\}.
\]
In this case we are aiming for an upper bound of the form $O(B^{2+\varepsilon})$, and so we do not have to remove any of the exceptional curves. Moreover, the point $[1, 1, 0, 0]$ is the unique base point of the anti-canonical linear system, which induces the rational map $[y, x, u , v]\mapsto [u, v]\in \PP^1$ and so we may restrict  our attention to counting those points for which $(u, v)\neq \bm{0}$. 

Note that upon replacing $[y, x,  u, v]\in \PP(3,2,1,1)(K)$ with $[\mu^3y, \mu^2x, \mu u, \mu v]$ for a suitable unit $\mu \in \O_K^\times$, we see from Lemma~\ref{lem.good_units} that every element of $\PP(3,2,1,1)(K)$ with height at most $B$ has a representative $(y,x,u,v)\in\O_K^4$ with $\norm{y}^{1/3}, \norm{x}^{1/2}, \norm{(u,v)} \ll B^{1/s_K}$. It thus follows that 
\[ 
N(B) \leq \sum_{\substack{(u,v)\in \O_K^2 \\ 0<\norm{(u,v)}\ll B^{1/s_K}}} N_{u,v}(B),
\]
where 
\[
N_{u,v}(B)\coloneqq \#\{(y,x)\in \O_K^2\colon \norm{y}\ll B^{3/s_K}, \norm{x}\ll B^{2/s_K}, y^2=x^3+g(u,v)x+h(u,v)\}.
\]
Define $\Delta(u,v) = 4g(u,v)^2+27h(u,v)^3$. Note that this is a homogeneous polynomial of degree $12$. If $(u,v)\in \O_K^2$ is such that $\Delta(u,v) \neq 0$ then $ZY^2 = X^3 + g(u,v)XZ^2 + h(u,v)Z^3$ defines an elliptic curve in $\PP^2$ and any integral point $(y,x)\in \O_K^2$ gives rise to a unique rational point on the elliptic curve. By Proposition \ref{Prop.UpperBoundEllCurve} we thus find $N_{(u,v)}(B) \ll B^\varepsilon$ in this case. Since the number of $(u,v)\in \O_K^2$ with $\norm{u,v}\ll B^{1/s_K}$ is $O(B^2)$ by Lemma~\ref{Le: Number.OK.Points}, we get a total contribution of $O_\varepsilon(B^{2+\varepsilon})$, which is sufficient.

It remains to estimate $\sum_{(u,v)} N_{u,v}(B)$ where the sum runs over $(u, v)\in \O_K^2$ that satisfy $\Delta(u,v) = 0$. Note first that there are at most twelve solutions $[u_i,v_i]$, $i = 1, \hdots, 12$ to  $\Delta(u,v) = 0$ in $\PP^1(K)$. Further, any $(u,v)\in \O_K^2$ such that $[u, v]=[u_i, v_i]$ in $\PP^1(K)$ must be an $\O_K$-multiple of $(u_i,v_i)$. By Lemma~\ref{Le: Number.OK.Points} we know that there are most $O(B\norm{(u_i,v_i)}_\infty^{-1})=O(B)$ pairs $(u,v)= \lambda(u_i,v_i)$ with $\norm{\lambda}\ll B^{1/s_K}$ and $\mathrm{N}(\lambda)\ll B\norm{(u,v)}_\infty^{-1}$. We thus find that the number of summands that we currently consider is bounded by $O(B)$, where the implied constant is independent of $X$. Moreover, it is clear that 
\[
N_{u,v}(B) \leq \#\{(y,x)\in \O_K^2\colon \norm{(y,x)}\ll B^{3/s_K}, y^2=x^3+g(u,v)x+h(u,v)\}
\]
and that the affine curve defined by $y^2=x^3+g(u,v)x+h(u,v)\subset \mathbb{A}^2$ is irreducible. The affine version of the Bombieri-Pila bound generalised to global fields by Paredes and Sasyk \cite[Theorem 1.9]{paredes2021uniform}, implies that the last quantity is bounded by $O(B)$, where the implied constant is independent of $X$. Hence we obtain a total contribution of $O(B^{2})$, which completes the proof of Theorem~\ref{Th: TheTheorem} for $d=1$.

\printbibliography
\end{document}